\algnewcommand\algorithmicinput{\textbf{Input:}}
\algnewcommand\INPUT{\item[\algorithmicinput]}
\algnewcommand\algorithmicoutput{\textbf{Output:}}
\algnewcommand\OUTPUT{\item[\algorithmicoutput]}
\newlength{\noteWidth}
\long\def\notes#1{\ifinner
{\footnotesize #1}
\else 
\marginpar{\parbox[t]{\noteWidth}{\raggedright\footnotesize#1}}
\fi\typeout{#1}}
\def\notes#1{}%\typeout{read notes: #1}}  %uncomment for final version
\def\spm#1{\notes{SPM:  #1}} 
\def\sh#1{\notes{SH: #1}}
\def\sh#1{\notes{SH:  #1}}
\def\bl#1{{\color{blue}#1}} 
\newtheorem{theorem}{Theorem}[section]
\newtheorem{corollary}[theorem]{Corollary}
\newtheorem{proposition}[theorem]{Proposition}
\newtheorem{lemma}[theorem]{Lemma}
\def\Lemma#1{Lemma~\ref{#1}}
\def\Proposition#1{Prop.~\ref{#1}}
\def\Theorem#1{Theorem~\ref{#1}}
\def\Corollary#1{Corollary~\ref{#1}}
\def\Section#1{Section~\ref{#1}}
\newcounter{rmnum}
\newenvironment{romannum}{\begin{list}{{\upshape (\roman{rmnum})}}{\usecounter{rmnum}
\setlength{\leftmargin}{14pt}
\setlength{\rightmargin}{8pt}
\setlength{\itemsep}{2pt}
\setlength{\itemindent}{-1pt}
}}{\end{list}}
\newcounter{anum}
 \def\FRAC#1#2#3{\genfrac{}{}{}{#1}{#2}{#3}}
\def\ddt{{\mathchoice{\FRAC{1}{d}{dt}}%
{\FRAC{1}{d}{dt}}%
{\FRAC{3}{d}{dt}}%
{\FRAC{3}{d}{dt}}}}
\def\ddtp{{\mathchoice{\FRAC{1}{d^{\hbox to 2pt{\rm\tiny +\hss}}}{dt}}%
{\FRAC{1}{d^{\hbox to 2pt{\rm\tiny +\hss}}}{dt}}%
{\FRAC{3}{d^{\hbox to 2pt{\rm\tiny +\hss}}}{dt}}%
{\FRAC{3}{d^{\hbox to 2pt{\rm\tiny +\hss}}}{dt}}}}
\def\half{{\mathchoice{\FRAC{1}{1}{2}}%
{\FRAC{1}{1}{2}}%
{\FRAC{3}{1}{2}}%
{\FRAC{3}{1}{2}}}}
\newcommand{\field}[1]{\mathbb{#1}}
\def\Re{\field{R}}
\def\ind{\field{I}}
\def\transpose{{\intercal}}
\def\Prob{{\sf P}}
\def\Expect{{\sf E}}
\def\epsy{\varepsilon}
\def\varble{\,\cdot\,}
\def\state{{\sf X}}
\def\zstate{{\sf Z}}
\def\haf{{\hat f}}
\def\hag{{\hat g}}
\def\bfmath#1{{\mathchoice{\mbox{\boldmath$#1$}}%
{\mbox{\boldmath$#1$}}%
{\mbox{\boldmath$\scriptstyle#1$}}%
{\mbox{\boldmath$\scriptscriptstyle#1$}}}}
\def\bfmX{\bfmath{X}}
\def\bfPhi{\bfmath{\Phi}}
\newcommand{\dotDelta}{{\vphantom{\Delta}\mathpalette\d@tD@lta\relax}}
\newcommand{\d@tD@lta}[2]{%
\ooalign{\hidewidth$\m@th#1\mkern-1mu\cdot$\hidewidth\cr$\m@th#1\Delta$\cr}%
}
\def\Alg#1{Alg.~\ref{a:Zapalg}}
\def\fSA{f}
\def\barfSA{\bar{f}}
\def\eqdef{\mathbin{:=}}
\def\elig{\zeta}
\def\trace{\hbox{\rm trace\,}}
\def\Lemma#1{Lemma~\ref{#1}}
\def\Proposition#1{Prop.~\ref{#1}}
\def\Prop#1{Prop.~\ref{#1}}
\def\Theorem#1{Thm.~\ref{#1}}   %Removed Theorem to be consistent with Prop.
\def\Corollary#1{Corollary~\ref{#1}}
\def\hab{\widehat b}
\def\haA{\widehat A}
\def\Var{\hbox{\sf Var}\,}
\def\Cov{\text{\rm Cov}\,}
\def\ind{\field{I}}
\def\Re{\field{R}}
\def\LV{L_\infty^V}
\def\LVstar{L_\infty^{V^*}}
\newlength{\dhatheight}
\newcommand{\doublehat}[1]{%
    \settoheight{\dhatheight}{\ensuremath{\hat{#1}}}%
    \addtolength{\dhatheight}{-0.25ex}%
    \hat{\vphantom{\rule{1pt}{\dhatheight}}%
    \smash{\hat{#1}}}}
\def\hahaf{\doublehat{f}}
\def\clA{\mathcal{A}}
\def\clB{\mathcal{B}}
\def\clD{\mathcal{D}}
\def\clE{\mathcal{E}}
\def\clF{\mathcal{F}}
\def\clM{\mathcal{M}}
\def\clT{\mathcal{T}}
\def\clX{\mathcal{X}}
\def\tot{{\text{tot}}}
\def\Amap{\clA}
\def\Bmap{\clB}
\def\haclA{\widehat{\clA}}
\def\tiltheta{\widetilde \theta}
\def\haZ{\widehat Z}
\def\haDelta{\widehat \Delta}
\def\sqrtV{\scriptscriptstyle\sqrt{V}}
\title{Explicit Mean-Square Error Bounds
	\\
for Monte-Carlo and Linear Stochastic Approximation} 
\author{
		Shuhang Chen\thanks{Department of Mathematics at the University of Florida, Gainesville.}%
		\and
		Adithya M. Devraj\thanks{Department of ECE at the University of Florida, Gainesville.}%
		\and
		Ana Bu\v{s}i\'{c}\thanks{Inria and DI ENS, \'Ecole Normale	Sup\'erieure, CNRS, PSL Research University, Paris, France.      \newline Financial support from ARO grant W911NF1810334 is gratefully acknowledged.  Additional support from EPCN 1609131 \&\ CPS~1646229, and French National Research Agency grant ANR-16-CE05-0008.}%
		\and 
		Sean  Meyn\footnotemark[2]
}
\begin{document}

\maketitle

%The Abstract paragraph should be indented 0.25 inch (1.5 picas) on both left and right-hand margins. Use  10~point type, with a vertical
%spacing of 11~points.  {\bf Abstract} must be centered, bold, and in  point size 12. Two line spaces precede the Abstract. The Abstract
%must be limited to one paragraph.

\begin{abstract}
%Stochastic approximation (SA) algorithms find wide variety of applications in statistics and machine learning. 

This paper concerns error bounds for recursive equations subject to Markovian disturbances.   Motivating examples abound within the fields of Markov chain Monte Carlo (MCMC) and Reinforcement Learning (RL),   and many of these algorithms can be interpreted as special cases of stochastic approximation (SA).   It is argued that it is not possible in general to obtain a Hoeffding bound on the error sequence, even when the underlying Markov chain is reversible and geometrically ergodic, such as the M/M/1 queue.  This is motivation for the focus on mean square error bounds for parameter estimates.  It is shown that  mean square error achieves the optimal rate of $O(1/n)$, subject to conditions on the step-size sequence.    Moreover, the exact constants in the rate are obtained,   which is of great value in algorithm design.   
% We demonstrate that our results are applicable to MCMC and temporal difference learning.
\medskip

{\small
	\noindent
	\textbf{Keywords:}  
	Stochastic Approximation,
	Markov chain Monte Carlo,
	Reinforcement learning
}
\smallskip

%{\small
%	\noindent
%	\textbf{2000 AMS Subject Classification:}
%	93E20,	%  	Optimal stochastic control
%	93E35	%  	Stochastic learning and adaptive control
%	%60J20  	%Applications of Markov chains and discrete-time Markov processes on general state spaces (social mobility, learning theory, industrial processes, etc.) [See also 90B30, 91D10, 91D35, 91E40]
%	%60J22  	%Computational methods in Markov chains [See also 65C40]
%	
%	% 60J10,          %  chains with discrete parameter
%	%60J25,          % Markov processes with continuous parameter
%	%37A30,          % Ergodic theorems, spectral theory, Markov operators
%	% 60F10,          % Large deviations
%	%47H99.          % nonlinear operators
%}
\end{abstract}

%\textbf{NOTATION (temporary)}
%
%I want consistency with Adithya's thesis.  So,
%\[
%\Sigma_\theta = \lim_{n\to\infty}  \Sigma_n
%\]
%
%
%$\tiltheta^{\varrho}_n =  n^\varrho \tiltheta_n$ 
%
%$\Sigma_n^{\varrho,(i)} =\Expect[\tiltheta^{\varrho,(i)}(\tiltheta^{\varrho,(i)})^\transpose] = n^{2\varrho}\Cov(\theta_n^{(i)})$.
%(where is this defined in the paper?  Maybe not used)
%Special case $\varrho =1$ we omit the variable:  $\Sigma_n^{(i)} =  n\Cov(\theta_n^{(i)})$.
%
%
%
%\[
%\Cov(\theta_n) = \sum_{i=1}^3 \Cov(\theta_n^{(i)}) + \sum_{i=1}^3\sum_{j=1, j\neq i}^3 \Expect[\tiltheta_n^{(i)} (\tiltheta_n^{(j)})^\transpose]
%\]
%
%
%Also,   please maintain my notation.  It was disturbing to see \verb+\clE+ suddenly being used for $\tiltheta$.    
% 

\section{Introduction}

%\rd{Need to adjust list  (see macro in source file)}
%
%\renewenvironment{romannum}{\begin{list}{{\upshape (\roman{rmnum})}}{\usecounter{rmnum}
%\setlength{\leftmargin}{3pt}
%\setlength{\rightmargin}{8pt}
%\setlength{\itemsep}{1pt}
%\setlength{\itemindent}{16pt}
%\setlength{\topsep}{0pt}
%\setlength{\partopsep}{0pt} 
%}}{\end{list}}

Many questions in statistics and the area of reinforcement learning are concerned with computation of the root of a function in the form of an expectation:  $\barfSA(\theta)=\Expect[\fSA(\theta,\Phi))]$,  where $\Phi$ is a vector valued random variable, and $\theta\in\Re^d$.   The value $\theta^*$ satisfying $\barfSA(\theta^*)=0$ is most commonly approximated through some version of the stochastic approximation (SA) algorithm of Robbins and Monro \cite{robmon51a,bor08a}.   In its basic form, this is the recursive algorithm

\begin{equation}
\theta_{n+1} = \theta_n + \alpha_{n+1} \fSA(\theta_n, \Phi_{n+1})
\label{e:SA}
\end{equation} 
in which $\{\alpha_n\}$ is a non-negative gain sequence,   and $\{ \Phi_n\}$ is a sequence of random variables whose distribution converges to that of $\Phi$ as $n\to\infty$.   
The sequence is a   Markov chain in the applications of interest in this paper.

There is a large body of work on conditions for convergence of this recursion, and also a Central Limit Theorem (CLT):  with $ \tiltheta_n  =\theta_n-\theta^*$,
\begin{alignat*}{2}
\lim_{n\to\infty} \tiltheta_n  &= 0     &&\textit{almost surely}
\\
\lim_{n\to\infty}  \sqrt{n} \tiltheta_n              & =   N(0,\Sigma_\theta)  \qquad    &&\textit{in distribution}
\end{alignat*} 
The $d\times d$ matrix $\Sigma_\theta$ is known as the \textit{asymptotic covariance}.  
The CLT requires substantially stronger assumptions on the gain sequence, the function $\fSA$,  
and the statistics of the ``noise'' sequence $\{ \Phi_n\}$   \cite{benmetpri12,kusyin97}.

%{\huge  \rd{
%AD:  revise history to include
%\cite{blu54}
%}}

Soon after the stochastic approximation algorithm was first introduced in \cite{robmon51a,blu54},  
Chung~\cite{chu54o}  identified the optimal CLT covariance and techniques to obtain the optimum for scalar recursions.
This can be cast as a form of \textit{stochastic Newton-Raphson} (SNR) \cite{devmey17a,devmey17b,devbusmey19,dev19}.
Gradient free methods  [or  \textit{stochastic quasi Newton-Raphson} (SQNR)] appeared in later work:   The first example was proposed by 
Venter in \cite{ven67}, which was shown to obtain the optimal variance for a one-dimensional SA recursion.  The algorithm obtains estimates of the SNR gain $-A^{-1}$ (see   \eqref{e:linSA}  below), through  a procedure similar to the Kiefer-Wolfowitz algorithm \cite{kiewol52}.      Ruppert proposed an extension of Venter's algorithm for vector-valued functions \cite{rup85}. 

The averaging technique of Ruppert and Polyak \cite{rup88,pol90,poljud92} is a two-time-scale algorithm that is also designed to achieve the optimal asymptotic covariance. More recently, a two-time-scale variant of the SNR algorithm known as ``Zap-SNR" was proposed in \cite{devmey17a,devmey17b,devbusmey19,dev19}, with applications to reinforcement learning.   Zap~algorithms are stable and convergent under mild assumptions \cite{devmey17a, chedevbusmey19}.

Under the typical assumptions under which the CLT holds for the recursion \eqref{e:SA},  the asymptotic covariance  has an explicit form in terms of a linearization \cite[Chapter 10, Theorem 3.3]{kusyin03}.  Assume that the solution to $\barfSA(\theta^*)=0$ is unique, and denote  
\begin{equation}
A = \partial  \barfSA\, (\theta^*)   \,,\qquad  \Delta_n  =  \fSA(\theta^*, \Phi_n)
\label{e:linSA}
\end{equation}
The error dynamics of the SA recursion are then approximated by the linear SA recursion: 
\begin{equation}
\tiltheta_{n+1}  = \tiltheta_n + \alpha_{n+1} [A \tiltheta_n  + \Delta_{n+1}  ]  \,.   
\label{e:SAlinearized}
\end{equation}
Subject to the assumption that $\half I + A$ is Hurwitz (i.e., $\text{Real}(\lambda) < -\half$ for each eigenvalue  of $A$), 
the $d\times d$ matrix $\Sigma_\theta$ is the unique positive semi-definite solution to the Lyapunov equation
\begin{equation}
[\half I+A]\Sigma + \Sigma[ \half I + A]^\transpose + \Sigma_{\Delta}=0
\label{e:lyap}
\end{equation}
in which $\Sigma_{\Delta}$ is also an asymptotic covariance:  the covariance matrix appearing in the CLT for the sequence $\{\Delta_n\}$   (which may be expressed in terms of a solution to a Poisson equation - see \cite[Theorem 2.2, Chapter 10]{kusyin03}).

The goal of this paper is to demonstrate that the CLT is far less \textit{asymptotic} than it may appear. For this we focus analysis on the linearization \eqref{e:SAlinearized},  along with first steps towards analysis of the nonlinear recursions.   
Subject to assumptions on $A$ and the Markov chain, we establish the bound 
\begin{equation}
\Cov(\theta_n)  =  n^{-1}  \Sigma_\theta  + O(n^{-1-\delta})  
\label{e:finite-n1}
\end{equation}
with $\Cov(\theta_n) = \Expect[\tiltheta_n\tiltheta_n^\transpose]$.   
Under further assumptions, the bound is refined to obtain $\delta\ge 1$,  and even finer bounds:
\begin{equation}
\Cov(\theta_n)  =  n^{-1}  \Sigma_\theta  +n^{-2}\Sigma_{\theta,2}   + O(n^{-2-\delta})  
\label{e:finite-n2}
\end{equation}
where again $\delta>0$ and formula of $ \Sigma_{\theta,2}$ is obtained in the paper based on a second Lyapunov equation and a  solution to a \textit{second} Poisson equation. 

It is hoped that these results will be helpful in construction and performance analysis of many algorithms found in machine learning, statistics and reinforcement learning.   Identification of the coefficient for the $n^{-2}$ term from \eqref{e:finite-n2} may lead to criteria for gain design when one aims to minimize the covariance with a fixed budget on the number of iterations.

The reader may ask, why not search directly for finite-$n$ bounds of the flavor of Hoeffding's inequality:
\begin{equation}
\Prob\{ \|\tiltheta_n \| \ge \epsy \}   \le  b_0 \exp( - n I_0(\epsy) )\, 
\label{e:Hoeff}
\end{equation}
where $b_0>0$ is fixed,  and $I_0$ is a convex function that is strictly positive and finite in a region $0<\epsy^2 \le \bar\epsy^2$.   The answer is that such bounds are not always possible even for the simplest SA recursions, even when the Markov chain is geometrically ergodic.    This is clarified in the first general example:

\subsection{Markov Chain Monte Carlo}
\label{s:MCMC}

As a prototypical example of stochastic approximation, Markov chain Monte Carlo (MCMC) proceeds by constructing an ergodic Markov chain $\bfPhi$ with invariant measure $\pi$ so as to estimate $\pi(F) = \int F(z)\, \pi(dz)$ for some function $F:\zstate\rightarrow \Re^d$. One then simulates $\Phi_1,\Phi_2,\dots,\Phi_{n+1}$ to obtain the estimates
\begin{equation}
\label{e:mcmc-est}
\theta_{n+1} = \frac{1}{n+1}\sum_{k=1}^{n+1} F(\Phi_k)
\end{equation}
This is an instance of the SA recursion \eqref{e:SA}:
\begin{equation}
\label{e:mcmc-sa}
\theta_{n+1} = \theta_n + \frac{1}{n+1}(- \theta_n + F(\Phi_{n+1}))
\end{equation}
Subtracting $\theta^* = \pi(F)$ from both sides of \eqref{e:mcmc-sa} gives, with $\tiltheta_n = \theta_n - \pi(F)$,
\[
\tiltheta_{n+1}  = \tiltheta_n  + \frac{1}{n+1}(- \tiltheta_n  + F(\Phi_{n+1}) - \pi(F) )
\]
which is   \eqref{e:SAlinearized} in a special case: $A=-I$,  $\Delta_{n+1} = F(\Phi_{n+1}) - \pi(F)$ and $\alpha_n=1/n$.

A significant part of the literature on MCMC focuses on finding Markov chains whose marginals approach the invariant measure $\pi$ quickly. Error estimates for MCMC   have only been studied under rather restrictive settings. For instance, under the assumption of uniform ergodicity of $\bfPhi$  and uniform boundedness of $F$  (which rarely hold in practice outside of a finite state space),   a generalized Hoeffding's inequality was obtained in \cite{glyorm02} to obtain the PAC-style error bound \eqref{e:Hoeff}.   
%\sh{Should refer some paper on MCMC convergence rate here? like \cite{ros95a}?
%\\
%SM2SH:   no, I believe this is just about rates for ergodicity}
We can not expect Hoeffding's bound if either of these assumptions is relaxed.   Consider the simplest countable state space Markov chain:  the M/M/1 queue with  uniformization,  defined with $\zstate=\{0,1,2,\dots\}$ and
\[
\Phi_{n+1} = \begin{cases}    \Phi_n+1   &   \text{prob.\ $\alpha$}
\\
\max(\Phi_n-1, 0)  &   \text{prob.\ $\mu = 1-\alpha$}
\end{cases}
\]
This is a reversible, geometrically ergodic Markov chain when $\rho=\alpha/\mu<1$, with geometric invariant distribution $\pi(z) = (1-\rho)\rho^z$,  $z\ge 0$.  
It is shown in \cite{CTCN} that the error bound \eqref{e:Hoeff} fails for most unbounded functions $F$.    The question is looked at in greater depth in~\cite{dufmey10,dufmey14}, where asymptotic bounds are obtained for the special case  $F(z) \equiv z$.    An asymptotic version of  \eqref{e:Hoeff} is obtained for the lower tail:
\begin{align}
\lim_{n\to\infty}   \frac{1}{n} \log \Bigl(
\Prob\{  \tiltheta_n   \le -\epsy \}  \Bigr) &=  - I_0(\epsy)  
\label{e:HoeffMM1-}
\end{align}
in which the right hand side is strictly negative and finite valued for positive $\epsy$ in a neighborhood of zero.  
An entirely different scaling is required for the upper tail:
\begin{align}
\lim_{n\to\infty}  \frac{1}{n}  \log \Bigl(
\Prob\Bigl \{    \frac{\tiltheta_n}{n}  \ge \epsy   \Bigr\}  \Bigr)  &=  - J_0(\epsy)  
\label{e:HoeffMM1+}
\end{align}
where again the right hand side is  strictly negative and finite valued for   $\epsy>0$ sufficiently small.   It follows from \eqref{e:HoeffMM1+} that the PAC-style bound  \eqref{e:Hoeff}  is not attainable.

\subsection{Reinforcement Learning}
\label{s:rl-td}
%The theory of this paper is illustrated here through application to TD-learning. 

The theory of this paper also applies to TD-learning.   In this case, the Markov chain $\bfPhi$ contains as one component a state process for a system to be controlled.

Consider a Markov chain $\bfmX$ evolving on a (Polish) state space $\state$.
Given a cost function $c: \state \to \Re$, and a discount factor $\beta \in (0,1)$, the goal in TD-learning is to approximate the solution $h: \state \to \Re$ to the Bellman equation:
\begin{equation}
h(x) = c(x) + \beta \Expect [ h(X_{n+1}) \mid X_n = x  ]
\label{e:BE}
\end{equation}

This functional equation can be recast:   
\begin{subequations}
	\begin{alignat}{1}
\Expect[&\clD(h, \Phi_{n+1}) \mid \Phi_0\,\dots\, \Phi_n] = 0
\label{e:BE_fun} 
\\[.2em]
\textit{where} \quad
&
\clD(h, \Phi_{n+1})    \eqdef c(X_n) + \beta h(X_{n+1}) - h(X_n)\,,  \quad \Phi_{n+1} \eqdef (X_{n+1} , X_{n}) \,,  \quad n\ge 0
\label{e:td_F}
\end{alignat}
\end{subequations} 
Equation \eqref{e:BE_fun} may be regarded as motivation for the TD-learning algorithms of  \cite{sut88,tsiroy97a}.

Consider a linearly parameterized family of candidate approximations 
$\{h^\theta(x) = \theta^\transpose \psi (x) : \theta\in\Re^d\}$, where  $\psi: \state \to \Re^d$ denotes the $d$ basis functions. The goal in TD-learning is to solve the \emph{Galerkin relaxation} of (\ref{e:BE_fun},\ref{e:td_F}):  % Find $\theta^* \in \Re^d$ such that
\begin{equation}
\Expect[\clD(h^{\theta^*}, \Phi_{n+1}) \elig_n ] = 0
\label{e:BE_fun_gal}
\end{equation} 
where $\{\elig_n\}$ is a $d$-dimensional stochastic process, adapted to     $\bfPhi$, and the expectation is with respect to the steady state distribution.
In particular, $\elig_n \equiv \psi (X_n)$ in  TD($0$) learning, so that the goal in this case is to find $\theta^* \in \Re^d$ such that: 
\begin{equation}
\begin{aligned}
\Expect &\big [  \clD(h^{\theta^*}, \Phi_{n+1}) \psi(X_n) \big ] =  0
\\
& \clD(h^{\theta^*}, \Phi_{n+1})   =  c(X_n)  + \beta h^{\theta^*} (X_{n+1}) -h^{\theta^*} (X_n) 
\end{aligned}
\label{e:td0_F}
\end{equation}

The TD($0$) algorithm is the SA recursion \eqref{e:SA} applied to solve \eqref{e:td0_F}:
\begin{equation}
\begin{aligned}
\theta_{n+1} & = \theta_n + \alpha_{n+1} d_{n+1} \psi(X_n)
\\
d_{n+1} & =  c(X_n)  +  \beta h^{\theta_n}(X_{n+1}) - h^{\theta_n}(X_n)  
\label{e:TD0_SA}
\end{aligned}
\end{equation}
Denoting
\begin{equation*}
\begin{aligned}
A_{n+1} & \eqdef \psi(X_n) \big( \beta \psi(X_{n+1}) - \psi(X_n) \big)^\transpose 
\\
b_{n+1} & \eqdef - c(X_n) \psi(X_n)
\end{aligned}
\end{equation*}
the algorithm \eqref{e:TD0_SA} can be rewritten as:
\begin{equation}
\theta_{n+1} = \theta_n + \alpha_{n+1} \big( A_{n+1} \theta_n  -  b_{n+1} \big)
\label{e:TD0_linSA}
\end{equation}
Note that $\theta^*$ from \eqref{e:BE_fun_gal} solves the linear equation $\Expect [ A_{n+1} ]\theta^* =\Expect[b_{n+1}]$. Subtracting $\theta^*$ from both sides of \eqref{e:TD0_linSA} gives, with $\tiltheta_n = \theta_n - \theta^*$,
\begin{equation}
\tiltheta_{n+1} = \tiltheta_n + \alpha_{n+1}[A\tiltheta_n + (A_{n+1}-A)\tiltheta_n + A_{n+1}\theta^* - b_{n+1}]
\label{e:TD0_linSA_err}
\end{equation}
Under mild conditions, we show through coupling  that iteration \eqref{e:TD0_linSA_err} can be closely approximated by the linear SA recursion \eqref{e:SAlinearized} with matrix $A=\Expect [ A_{n+1} ]$ and noise sequence $\Delta_{n+1} = A_{n+1}\theta^* - b_{n+1}$. In particular, the two recursions have the same asymptotic covariance if the matrix $\half I + A$ is Hurwitz  (see \Section{s:mse}).
 
Under general assumptions on the Markov chain $\bfmX$, and the basis functions $\psi$,  it is known that matrix $A = \partial  \Expect[\clD(h^{\theta^*}, \Phi_{n+1})] = \Expect [ A_{n+1} ]$ is Hurwitz, and that  the sequence of estimates $\{\theta_n\}$ converges to $\theta^*$ \cite{tsiroy97a}.
However, when the discount factor $\beta$ is close to $1$, it can be shown that $\lambda_{\max} > - \half$ (where $\lambda_{\max}$ denotes the largest eigenvalue of $A$), and is in fact close to $0$ under mild additional assumptions \cite{devmey17a,dev19,devbusmey20}.  It follows that the algorithm has infinite asymptotic covariance:  full details and finer results can be deduced from  Theorems~\ref{t:SAlinearized} and \ref{t:couple-td}.

%\ad{Should we state the eigenvector condition? I avoided this because then I'll have to define the noise sequence and then $\Sigma_\Delta$.
%\\
%SM2AD:  agreed}

The SNR algorithm is defined as follows:  
%\spm{fair to exclusively cite us for this? \cite{devmey17a}.  Let's leave out reference since we say so much about SNR being optimal above. }
\begin{align}
\theta_{n+1} & = \theta_n - \alpha_{n+1} \haA_{n+1}^{-1} d_{n+1} \psi(X_n)
\label{e:TD0_SNR}
\\
d_{n+1} & =  c(X_n) + \beta h^{\theta_n}(X_{n+1}) - h^{\theta_n}(X_n)  
\nonumber
\\
\haA_{n+1} & =  \haA_n   +  \alpha_{n+1} \big[ A_{n+1} -  \haA_n  \big]
\label{e:TD0_haA_SNR}
\end{align}
Under the assumption that the sequence of matrices $\{\haA_n: n \geq 0\}$ is invertible for each $n$, it is shown in \cite{devmey17a,dev19} that the sequence of estimates obtained using (\ref{e:TD0_SNR},\ref{e:TD0_haA_SNR}) are identical to the parameter estimates obtained using the LSTD($0$) algorithm:  $\theta_{n+1}   = \haA_{n+1}^{-1} \hab_{n+1}$, with
%\sh{Need to put TD error bounds those RL people might know.}
\begin{align} 
\haA_{n+1} & =  \haA_n   +  \alpha_{n+1} \big[ A_{n+1} -  \haA_n  \big]
\nonumber
\\
\hab_{n+1} & =  \hab_n   +  \alpha_{n+1} \big[ b_{n+1} -  \hab_n  \big]
\nonumber
\end{align}
Consequently, the LSTD($0$) algorithm achieves the optimal asymptotic covariance.

%\bl{\bf Response to Ana:}
Q-learning and many other RL algorithms can also be cast as SA recursions.  They are no longer linear,   but it is anticipated that bounds can be obtain in future research through linearization \cite{ger99}.

\subsection{Literature Survey}

Finite time performance  bounds
for linear stochastic approximation were obtained in many prior papers, subject to the assumption that the noise sequence $\{\Delta_n\}$ appearing in \eqref{e:SAlinearized} is a martingale difference sequence
\cite{dalszothoman2017finite, narsze2017finite}.    This assumption is rarely satisfied in the applications of interest to the authors.  

Much of the literature on finite time bounds for linear SA recursions with Markovian noise has been recent. For constant step-size algorithms with step-size $\alpha$,  it follows from analysis in \cite{bormey00a} that the pair process $(\theta_n , \Phi_n)$ is a geometrically ergodic Markov chain, and the covariance of $\theta_n$ is $O(\alpha)$ in steady state.
Finite time bounds of order $O(\alpha)$ were obtained in \cite{tad06, bharussin2018finite, sriyin19, hu19}.
Unfortunately,  these bounds are not tight, and hence their value for algorithm design is limited. 

Mean-square error bounds have also been obtained for diminishing step-size algorithms,   to establish the optimal rate of convergence 
$\Expect[\|\tiltheta_n\|^2] \le b_\theta  / n$   \cite{sriyin19, bharussin2018finite, che19}.     The constant $b_\theta  $  is a function of the mixing time of the underlying Markov chain.
These results require strong assumptions (uniform ergodicity of the Markov chain),  and do not obtain the optimal constant $b^*_\theta =\trace(\Sigma_\theta)$.     Rather than parameter estimation error,  finite time bounds are obtained in \cite{karmiamouwai19} for $\Expect[\|\barfSA(\theta_n)\|^2] $,  which may be regarded as a far more relevant performance criterion.   Bounds are obtained for Markovian models,  subject to the existence of a Lyapunov function similar to what is assumed in the present work.  It is again not clear if the resulting bounds are tight,  or have value in algorithm design.

\subsection{Contributions}

The main contribution of this paper is a general framework for analyzing the finite time performance of linear stochastic approximation algorithms with Markovian noise,
and vanishing step-size (required to achieve the optimal rate of convergence of Chung-Ruppert-Polyak).
The M/M/1 queue example illustrates plainly that Markovian noise introduces challenges not seen in the ``white noise'' setting,  and that the finite-$n$   error bound \eqref{e:Hoeff} cannot be obtained without substantial restrictions.   Even under the assumptions of \cite{glyorm02}  (uniform ergodicity, and bounded noise),  the resulting bounds are \textit{extremely loose} and hence may give little insight for algorithm design.  
Our approach allows us to obtain explicit bounds,  without the uniform boundedness assumption of noise that is frequently imposed in the literature~\cite{bharussin2018finite, sriyin19, hu19, che19}.   Instead,  it is assumed that the Markovian noise is  $V$-uniform ergodic;  an assumption that is far   weaker than   geometric or uniform mixing.

Our starting point is the classical martingale approximation of the noise used in CLT analysis of Markov chains \cite[Chapter 17]{MT} and used in the analysis of SA recursions since  Metivier and Priouret~\cite{metpri84}.  Under mild assumptions on the Markov chain,  each $\Delta_n$ can be expressed as the sum of a martinagle difference and a telescoping term.   
The solution of the linear recursion \eqref{e:SAlinearized} is decomposed as a sum of the respective responses: 
\begin{equation}
\label{e:SAlinearized-decomp}
\tiltheta_n = \tiltheta_n^{\clM} + \tiltheta_n^{\clT}
\end{equation}
The challenge is to obtain explicit bounds on the mean square error for each term.

We say that a deterministic vector-valued sequence $\{e_n\}$ converges to zero at rate $1/n^{\varrho_0}$ if
\[
\lim_{n\to\infty  } n^{\varrho} \|e_n\|
=
\begin{cases}
0, & \text{if } \;\varrho < \varrho_0 \\
\infty, & \text{if }\; \varrho > \varrho_0
\end{cases}
\]
Bounds for the mean-square error  are obtained in  \Theorem{t:SAlinearized}, subject to conditions on both the matrix $A$ and the noise sequence.   In summary, under general assumptions on $\{\Delta_n\}$, 
\begin{romannum}
\item  The bound 	 \eqref{e:finite-n1} holds if 
$\half I +A$ is Hurwitz.

\item  If $  I +A$ is Hurwitz,   then the finer bound \eqref{e:finite-n2}  holds.

\item If there is an eigenvalue of $A$ satisfying $\text{Real}(\lambda) > -\half$,  and corresponding left-eigenvalue $v$ that lies outside of the null-space of the asymptotic covariance of the noise sequence,  
then
\begin{equation}
\lim_{n\to\infty } n^{2\rho}   \Expect[ |v^\transpose \tiltheta_n|^2]  =
\begin{cases} 
  0,   &  \quad  \varrho < \varrho_0 
\\
 \infty,   & \quad \varrho > \varrho_0 
\end{cases}
\label{e:bad_poly_rate}
\end{equation}
with $\rho_0 = |\text{Real}(\lambda)|$.   The
convergence of $\Expect[ \|\tiltheta_n \|^2]$ to zero is thus no faster than $n^{-2\rho_0}$.
% (every eigenvalue of $A$ has real part less than $-1$), 
%\begin{equation*}
%%	\label{e:finite-n3}
%\Expect[\tiltheta_n\tiltheta_n^\transpose] = n^{-1}\Sigma_\theta +  n^{-2}\bigl(\Sigma_{\theta,2} +\Sigma_Z \bigr) + O(n^{-2-\delta}) 
%\end{equation*}%
%where $\delta >0$, and $\Sigma_{\theta,2}, \Sigma_Z$ are defined in \Theorem{t:SAlinearized-finer}.
\end{romannum}

\section{Mean Square Convergence}

\subsection{Notation and Background}
\label{s:notation}
 
Consider the linear SA recursion \eqref{e:SAlinearized}, with the noise sequence $\{\Delta_n\}$ defined in \eqref{e:linSA}.  We use the following notation to explicitly represent the noise as a function of $\Phi_n$:
\begin{equation}
\fSA^*(\Phi_n) \eqdef \Delta_n = \fSA(\theta^* \,, \Phi_n)
\label{e:f}
\end{equation}
A form of geometric ergodicity is assumed throughout.  To apply standard theory, it is assumed that the state space $\zstate$ is \textit{Polish} (the standing assumption in \cite{MT}).   We fix a measurable function $V\colon\zstate\to [1,\infty)$, and let $\LV$ denote the set of measurable functions $g\colon\zstate\to\Re$ satisfying
\[
\| g\|_V\eqdef \sup_{z\in\zstate}  \frac{|g(z)|}{V(z)}   <\infty
\]
The Markov chain $\bfPhi$ is assumed to be \textit{$V$-uniformly ergodic}: there exists
  $\rho\in(0,1)$, and $B_V<\infty$ such that for each $g\in\LV$, $z \in \zstate$,  
\begin{equation}
\Big| \Expect[g(\Phi_n) \mid \Phi_0 = z]   - \pi(g)  \Big|  
 \le B_V \|g\|_V\rho^n  V(z) \,, \quad n\ge 0
\label{e:Vuni}
\end{equation}
where $\pi$ is the unique invariant measure,   and $\pi(g) = \int g(z) \, \pi(dz)$ is the steady state mean.

The uniform bound \eqref{e:Vuni} is not a strong assumption.  For example, it is satisfied for the  M/M/1 queue described in \Section{s:MCMC} with $V(z) = \exp(\epsy_0 z)$,   for $\epsy_0>0$ sufficiently small,  with $z\in\zstate =\{0,1,\dots\}$~\cite[Thm.~16.4.1]{MT}.

\def\tilg{\tilde g}

The following are imposed throughout:
\paragraph{Assumptions:}
\begin{itemize}
\item[{\textbf{(A1)}}] The Markov process $\bfPhi$ is   $V$-uniformly ergodic, with unique invariant measure denoted $\pi$.
	
\item[{\textbf{(A2)}}] The $d\times d$ matrix $A$ is Hurwitz, and the step-size sequence $\alpha_n \equiv 1/n$, $n\ge 1$. 
% $\Delta_{n+1} = \fSA^*(\Phi_{n+1})$ with $\fSA^*:\zstate \rightarrow\Re^d$, where $\zstate$ is the state space of Markov cha in $\bfPhi$.

\item[{\textbf{(A3)}}] The function $\fSA^*: \zstate \to \Re^d$ satisfies$\| {\fSA_i^*}^2\|_V<\infty$ and  $\pi(\fSA^*_i) = 0$ for each $i$.
\end{itemize}
%The reader is referred to \cite{MT} for definitions, except for a few clarifications and consequences: 

For any $g\in\LV$,  denote   $\tilg(z) = g(z) - \pi(g)$,  and 
\begin{equation}
\hag(z) = \sum_{n=0}^\infty   \Expect[\tilg(\Phi_n) \mid \Phi_0 =z]   
\label{e:fishSum}
\end{equation}
It is evident that   $\hag\in\LV$ under (A1).  Further conclusions are summarized below. \Theorem{t:Vuni}~(i) follows immediately from (A1).   Part~(ii) follows from (i) and \cite[Lemma 15.2.9]{MT}  (the chain is also $\sqrt{V}$-uniformly ergodic).

\begin{theorem}
\label{t:Vuni}
The following conclusions hold for a  $V$-uniformly ergodic Markov chain:  
\begin{romannum}
\item  The function $\hag\in\LV$ defined in \eqref{e:fishSum} has zero mean, and solves \textit{Poisson's equation}:
\begin{equation}
\Expect[\hag (\Phi_{k+1}) \! \mid \! \Phi_k \! = \! z]  \! = \! \hag(z)   -  \tilg(z)  
\label{e:fishDef}
\end{equation}

\item  
%The Markov chain is also  $V^\delta$-uniformly ergodic (see \cite[Chapter 16]{MT} for the definition) for any $\delta\in (0,1]$.   
% In particular,  
If $g^2\in\LV$,  then $\hag^2\in\LV$.  
\qed
\end{romannum}
\end{theorem}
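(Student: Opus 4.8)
The plan is to treat the two parts separately, relying only on the $V$-uniform ergodicity bound \eqref{e:Vuni} together with the resolvent-type structure of the series \eqref{e:fishSum}; writing $P^n\tilg(z)=\Expect[\tilg(\Phi_n)\mid\Phi_0=z]$, the object of interest is $\hag=\sum_{n\ge 0}P^n\tilg$.

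For part (i) I would proceed in three steps. First, to confirm $\hag\in\LV$: since $\tilg=g-\pi(g)$, applying \eqref{e:Vuni} to $g$ gives $|P^n\tilg(z)|\le B_V\|g\|_V\rho^n V(z)$, so the defining series converges absolutely with $\|\hag\|_V\le B_V\|g\|_V/(1-\rho)$. Second, the zero-mean claim follows by integrating the series term by term against $\pi$: invariance gives $\int P^n\tilg(z)\,\pi(dz)=\pi(\tilg)=0$ for every $n$, and the interchange of $\sum$ and $\int$ is licensed by the geometric bound together with $\pi(V)<\infty$ (implicit in (A1), since $V\in\LV$). Third, for Poisson's equation I would apply the transition kernel to \eqref{e:fishSum} and use the Markov (tower) property to write $\Expect[\hag(\Phi_{k+1})\mid\Phi_k=z]=\sum_{n\ge 0}P^{n+1}\tilg(z)$; re-indexing $m=n+1$ and peeling off the $n=0$ term $P^0\tilg(z)=\tilg(z)$ identifies this sum as $\hag(z)-\tilg(z)$, which is exactly \eqref{e:fishDef}.

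For part (ii) the strategy is to transfer the argument of part (i) to the weight $\sqrt{V}$. If $g^2\in\LV$ then $|g(z)|\le \|g^2\|_V^{1/2}\sqrt{V(z)}$, so $g$ belongs to the function space associated with the weight $\sqrt{V}$. Invoking \cite[Lemma 15.2.9]{MT}, $V$-uniform ergodicity implies $\sqrt{V}$-uniform ergodicity, so \eqref{e:Vuni} holds verbatim with $V$ replaced by $\sqrt{V}$. Re-running the bound from the first step of part (i) with this weight yields $|\hag(z)|\le C\sqrt{V(z)}$ for a finite constant $C$, and squaring gives $\hag^2\in\LV$.

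The routine steps, namely the geometric estimates, are immediate from \eqref{e:Vuni}. The only points requiring care are the two interchanges of summation and expectation in part (i); I expect these to be the main --- though mild --- obstacle, and both are handled uniformly by dominated convergence using the geometric envelope $B_V\|g\|_V\rho^n V(z)$ and the standing integrability $\pi(V)<\infty$. Part (ii) introduces no additional difficulty beyond the cited $\sqrt{V}$-ergodicity.
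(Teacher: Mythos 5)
Your proposal is correct and follows essentially the same route as the paper, which disposes of part (i) as an immediate consequence of (A1) via the geometric envelope $B_V\|g\|_V\rho^n V(z)$ and of part (ii) via \cite[Lemma 15.2.9]{MT} ($\sqrt{V}$-uniform ergodicity), exactly as you do; your write-up merely fills in the standard dominated-convergence interchanges the paper leaves implicit. One cosmetic point: $\pi(V)<\infty$ follows not from $V\in\LV$ (which is trivial) but from the drift condition equivalent to $V$-uniform ergodicity \cite[Theorem 16.0.2]{MT} --- the conclusion you need is still valid.
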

%\ad{Removed $V^\delta$-uniform ergodicity from (iii).}

Assumption (A3) implies that the sequence $\{\Delta_n\}$ 
appearing in \eqref{e:SAlinearized}
is zero mean for the stationary version of the Markov chain $\bfPhi$. Its asymptotic covariance (appearing in the Central Limit Theorem) is denoted
\begin{equation}
\Sigma_\Delta  = \sum_{k=-\infty}^\infty  \Expect_\pi[\Delta_k\Delta_0^\transpose]
\label{e:SigmaDelta}
\end{equation}
where the expectations are in steady state.

A more useful representation of $\Sigma_\Delta$ is obtained through a decomposition of the noise sequence based on   Poisson's equation. This now standard technique was introduced in the SA literature in the 1980s  \cite{metpri84}.

With $\fSA^*$ defined in \eqref{e:f},  denote by $\haf$ a solution to   Poisson's equation:
\begin{equation}
\Expect[\haf\, ( \Phi_{k+1}) \mid \Phi_k =z] =\haf(z) -  \fSA^*(z) 
\label{e:DoubleFish}
\end{equation}
This is in fact $d$ separate Poisson equations since $\fSA^*\colon\zstate\to\Re^d$.  
It is assumed for convenience that the solutions are normalized so $\haf$ has zero steady-state mean. This is justified by the fact that $\haf-\pi(\haf)$ also solves  \eqref{e:DoubleFish} under assumption (A3). The fact that
$\haf_i^2 \in \LV$ for $1\le i \le d$ follows from \Theorem{t:Vuni}~(ii).   

We then write, for $n\ge 1$,
\[
\Delta_n = \fSA^*(\Phi_n) = \Delta_{n+1}^m  + Z_n   -  Z_{n+1}
\]
where $Z_n =  \haf(\Phi_n) $  and $ \Delta_{n+1}^m = Z_{n+1}   - \Expect[Z_{n+1}   \mid \clF_n] $ is a martingale difference sequence.    
Each of the sequences is bounded in $L_2$,  and the asymptotic covariance \eqref{e:SigmaDelta} is expressed
\begin{equation}
\Sigma_\Delta = \Expect_\pi[\Delta_n^m{\Delta_n^m}^\transpose]  
\label{e:SigmaDeltaFish}
\end{equation}
where the expectation is taken in steady-state.    The equivalence of \eqref{e:SigmaDeltaFish} and
\eqref{e:SigmaDelta} appears in  \cite[Theorem 17.5.3]{MT}  for the case in which $\Delta_n$ is scalar valued;  the generalization to vector valued processes involves only notational changes.   

%\sh{You said need assumptions here, we assume $\fSA^*$ is in $L_2$?
%\\
%SM2SH
%Obsolete?  If so, delete.}

\subsection{Decomposition and Scaling of the Parameter Sequence}
\label{s:decomp}

We now explain the decomposition \eqref{e:SAlinearized-decomp}. Each of the two sequences $\{\tiltheta_n^\clM\}$ and  $\{\tiltheta_n^\clT\}$ evolves as a stochastic approximation sequence, differentiated by the inputs and     initial conditions:
\begin{subequations} 
	\begin{align}
	\tiltheta_{n+1}^{\clM} 
	&=     \tiltheta_n^{\clM} 
	+ \alpha_{n+1} \bigl[ A  \tiltheta_n^{\clM}   \!  +   \Delta_{n+2}^m  \bigr]  \,,  && \tiltheta_0^{\clM} = \tiltheta_0
	\label{e:clEa-1}
	\\
	\tiltheta_{n+1}^{\clT} 
	&=     \tiltheta_n^{\clT} 
	+ \alpha_{n+1} \bigl[   A   \tiltheta_n^{\clT}  \!  +  \!  Z_{n+1} - Z_{n+2}  \bigr]\,, &&  \tiltheta_0^{\clT} =0
	\label{e:clEa-2}
	\end{align}%
	\label{e:decomp2}%
\end{subequations}
The second recursion admits a more tractable realization through the change of variables,  $\Xi_n =  \tiltheta_n^{\clT} + \alpha_n Z_{n+1}$,  $n\ge 1$.
\begin{lemma}
\label{t:Tele}
The sequence $\{\Xi_n\}$    evolves as the SA recursion
\begin{equation}
\Xi_{n+1} = 
\Xi_n +
 \alpha_{n+1} \bigl[   A \Xi_n    -    \alpha_n [I+A]    Z_{n+1}  \bigr]\,, \qquad \Xi_1  =  Z_{1}
\label{e:Tele}
\end{equation}
\qed
\end{lemma}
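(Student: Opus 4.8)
The plan is to verify the recursion \eqref{e:Tele} by direct substitution, exploiting the special structure of the step-size $\alpha_n = 1/n$ imposed in (A2). First I would invert the change of variables, writing $\tiltheta_n^{\clT} = \Xi_n - \alpha_n Z_{n+1}$ and, after shifting the index, $\tiltheta_{n+1}^{\clT} = \Xi_{n+1} - \alpha_{n+1} Z_{n+2}$. Substituting both expressions into the defining recursion \eqref{e:clEa-2} produces a single identity relating $\Xi_{n+1}$, $\Xi_n$, and the telescoping noise variables $Z_{n+1}$ and $Z_{n+2}$.

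Next I would simplify. The terms carrying $Z_{n+2}$ occur on both sides and cancel immediately, leaving
\[
\Xi_{n+1} = \Xi_n + \alpha_{n+1} A \Xi_n + (\alpha_{n+1} - \alpha_n) Z_{n+1} - \alpha_{n+1}\alpha_n A Z_{n+1}.
\]
The $A Z_{n+1}$ term already matches the corresponding part of the target \eqref{e:Tele}, so the only nontrivial point is the coefficient of the bare $Z_{n+1}$. Comparing with the right-hand side of \eqref{e:Tele}, which contributes $-\alpha_{n+1}\alpha_n (I+A) Z_{n+1}$, it remains to show $\alpha_{n+1} - \alpha_n = -\alpha_n \alpha_{n+1}$. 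This is exactly the telescoping identity satisfied by $\alpha_n = 1/n$, since $\tfrac{1}{n+1} - \tfrac1n = -\tfrac{1}{n(n+1)}$, and this is the sole place where (A2) enters the argument. Finally I would check the initialization: from $\tiltheta_0^{\clT} = 0$ and $\alpha_1 = 1$, the recursion \eqref{e:clEa-2} gives $\tiltheta_1^{\clT} = Z_1 - Z_2$, so that $\Xi_1 = \tiltheta_1^{\clT} + \alpha_1 Z_2 = Z_1$, as claimed.

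I do not anticipate a genuine obstacle: once the change of variables is substituted, the whole proof is a one-line algebraic cancellation turning on the step-size identity $\alpha_{n+1} - \alpha_n = -\alpha_n \alpha_{n+1}$. The real \emph{content} of the lemma — and the reason the transformed recursion \eqref{e:Tele} is preferable to \eqref{e:clEa-2} — is structural rather than computational: the awkward order-one telescoping pair $Z_{n+1} - Z_{n+2}$ has been absorbed into the state $\Xi_n$, leaving a forcing term of order $\alpha_n^2$. It is this quadratic smallness of the input that will ultimately yield the faster decay of the $\tiltheta_n^{\clT}$ contribution to $\Cov(\theta_n)$, and care should be taken in the writeup to highlight that the identity is what makes the $O(1)$ term collapse to $O(\alpha_n^2)$.
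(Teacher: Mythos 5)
Your proof is correct, and it reaches \eqref{e:Tele} by a genuinely different (and more elementary) route than the paper. The paper does not substitute into the one-step recursion at all: it first unrolls \eqref{e:clEa-2} into cumulative form,
\[
\tiltheta_{N+1}^{\clT} = \sum_{n=0}^N \alpha_{n+1} A \tiltheta_n^{\clT} + \sum_{n=0}^N \alpha_{n+1}\bigl[Z_{n+1}-Z_{n+2}\bigr]\,,
\]
then applies the summation-by-parts formula \eqref{e:SbP} with $a_k=\alpha_k$ and $b_k=Z_{k+1}$ to the second sum, and reads off the recursion for $\Xi$ from the resulting partial-sum representation; the initialization $\Xi_1 = Z_1$ and the absorbed term $\alpha_{N+1}Z_{N+2}$ both emerge as boundary terms of the Abel summation, rather than from the separate check you perform. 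The computational heart is nevertheless identical: the paper's step $\sum_{n=1}^N [\alpha_{n+1}-\alpha_n]Z_{n+1} = -\sum_{n=1}^N \alpha_{n+1}\alpha_n Z_{n+1}$ is precisely your identity $\alpha_{n+1}-\alpha_n = -\alpha_n\alpha_{n+1}$, so both arguments invoke the specific step-size $\alpha_n = 1/n$ from (A2) in exactly one place. What each buys: your local substitution is shorter and airtight, with no bookkeeping of boundary terms, and it makes transparent that the lemma is a pure verification once the change of variables is given; the paper's summation-by-parts derivation is constructive rather than verificational --- it explains where the change of variables $\Xi_n = \tiltheta_n^{\clT} + \alpha_n Z_{n+1}$ comes from, since the state augmentation appears automatically as the Abel boundary term, which is useful when the same trick is reused (e.g., for the second-level decomposition in \Section{s:fine-error} and for $\clE_n^{(2)}$ in \Appendix{s:couple-td}). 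Your closing structural remark --- that the content of the lemma is the collapse of the $O(1)$ telescoping input into an $O(\alpha_n^2)$ forcing term, which drives the faster decay of the $\tiltheta_n^{\clT}$ contribution --- is exactly the role the lemma plays in \Proposition{t:tele-3}, and is worth keeping in your writeup.
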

\vspace{-0.1in}

\Lemma{t:Tele} combined with  \eqref{e:decomp2} gives  
\begin{equation}
\tiltheta_n  = \tiltheta_n^{(1)}  +   \tiltheta_n^{(2)}  + \tiltheta_n^{(3)}
\label{e:unscaledDecomp}
\end{equation}
where  $\tiltheta_n^{(1)} \!= \tiltheta_n^\clM$, $\tiltheta_n^{(2)} \!= \Xi_n$, and $\tiltheta_n^{(3)} \!=\! -\alpha_nZ_{n+1} $  for $n\ge 1$. Note that $\tiltheta_n^\clT = \tiltheta_n^{(2)} + \tiltheta_n^{(3)}$.

It is more convenient to work directly with the recursion for the scaled sequence:
\begin{lemma}
\label{t:taylor-scale}
For any $\varrho \in (0,1/2]$, the scaled sequence $\tiltheta^{\varrho}_n =  n^\varrho \tiltheta_n$ admits the   recursion,
\begin{equation}
\tiltheta^{\varrho}_{n+1}  
\! = \! \tiltheta^{\varrho}_n  
+ \alpha_{n+1} \bigl[  \varrho_n \tiltheta^{\varrho}_n +   A(n,\varrho)   \tiltheta^{\varrho}_n
 +  (n+1)^\varrho \Delta_{n+1}  \bigr]   
\label{e:clElin}
\end{equation}
where  $\varrho_n = \varrho +\epsy(n,\varrho)$ with $\epsy(n,\varrho)=O(n^{-1})$, and $A(n,\varrho) =  (1+n^{-1})^\varrho A$.
\qed
\end{lemma}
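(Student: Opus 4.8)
The plan is to derive \eqref{e:clElin} by a direct algebraic manipulation of the unscaled recursion \eqref{e:SAlinearized}; no probabilistic input is needed, since $\tiltheta^{\varrho}_n = n^\varrho\tiltheta_n$ is a purely deterministic rescaling. First I would multiply both sides of \eqref{e:SAlinearized} by $(n+1)^\varrho$ and use $\alpha_{n+1}=1/(n+1)$ from (A2) to obtain
\[
\tiltheta^{\varrho}_{n+1} = (n+1)^\varrho\tiltheta_n + \alpha_{n+1}(n+1)^\varrho\bigl[A\tiltheta_n + \Delta_{n+1}\bigr].
\]
Then I would substitute $\tiltheta_n = n^{-\varrho}\tiltheta^{\varrho}_n$ everywhere, so that each occurrence of the unscaled error becomes the scaled one and the ratio $(n+1)^\varrho/n^\varrho = (1+n^{-1})^\varrho$ emerges as the natural factor.

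Carrying this out yields
\[
\tiltheta^{\varrho}_{n+1} = (1+n^{-1})^\varrho\tiltheta^{\varrho}_n + \alpha_{n+1}(1+n^{-1})^\varrho A\,\tiltheta^{\varrho}_n + \alpha_{n+1}(n+1)^\varrho\Delta_{n+1}.
\]
Matching this against the target form \eqref{e:clElin} identifies the coefficients at once: the matrix term gives $A(n,\varrho) = (1+n^{-1})^\varrho A$ exactly, the noise term is already in the claimed shape, and the remaining scalar term forces $(1+n^{-1})^\varrho = 1 + \alpha_{n+1}\varrho_n$, i.e.
\[
\varrho_n \eqdef (n+1)\bigl[(1+n^{-1})^\varrho - 1\bigr].
\]
There is nothing to choose here; $\varrho_n$ is \emph{defined} by this relation, and the displayed recursion is then an identity.

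It remains only to verify the claimed order $\varrho_n = \varrho + \epsy(n,\varrho)$ with $\epsy(n,\varrho) = O(n^{-1})$. For this I would apply the binomial expansion $(1+n^{-1})^\varrho = 1 + \varrho\, n^{-1} + \tfrac12\varrho(\varrho-1)n^{-2} + O(n^{-3})$, subtract $1$, and multiply by $(n+1)$; collecting powers of $n^{-1}$ leaves $\varrho_n = \varrho + O(n^{-1})$, with the $O(n^{-1})$ coefficient bounded uniformly over $\varrho\in(0,1/2]$ since it is controlled by the bounded quantities $\varrho$ and $\varrho(\varrho-1)$. The whole argument is elementary, and the only point requiring any care is this last expansion — specifically, tracking the factor $(n+1)$ so that the leading $\varrho$ survives while all higher corrections are absorbed into $\epsy(n,\varrho)$. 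I do not anticipate any genuine obstacle.
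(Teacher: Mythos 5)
Your proposal is correct and follows essentially the same route as the paper: multiply \eqref{e:SAlinearized} by $(n+1)^\varrho$, let the ratio $(n+1)^\varrho/n^\varrho=(1+n^{-1})^\varrho$ emerge, and Taylor-expand it to identify $\varrho_n=(n+1)\bigl[(1+n^{-1})^\varrho-1\bigr]=\varrho+O(n^{-1})$ and $A(n,\varrho)=(1+n^{-1})^\varrho A$. The only cosmetic difference is ordering --- the paper expands $(1+n^{-1})^\varrho$ first (also noting $\epsy(n,\varrho)>0$, which the lemma statement does not require) and then substitutes, while you match coefficients first and verify the expansion afterward; the computation is identical.
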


Denote $\tiltheta_n^{\varrho,(i)} =  n^\varrho  \tiltheta_n^{(i)} $ for each $i$. 
\Lemma{t:taylor-scale} combined with \eqref{e:unscaledDecomp} gives  
\begin{equation}
\tiltheta^{\varrho}_n  =    \tiltheta_n^{\varrho,(1)}  +    \tiltheta_n^{\varrho,(2)}  +    \tiltheta_n^{\varrho,(3)}
\label{e:scaledDecomp}
\end{equation}
 The first two sequences evolve as SA recursions:
\begin{subequations}%
	\begin{align}
	 	  \tiltheta_{n+1}^{\varrho,(1)} 
	 =     \tiltheta_n^{\varrho,(1)} 
	+  & \alpha_{n+1} \bigl[  [  \varrho_n I + A(n,\varrho)]  \tiltheta_n^{\varrho,(1)} 
	   +  (n+1)^\varrho \Delta_{n+2}^m  \bigr]  \,, && \tiltheta_0^{\varrho,(1)} \!= \! \tiltheta^{\varrho}_0
	\label{e:clE-1}
	\\
	   \tiltheta_{n+1}^{\varrho,(2)} 
	=     \tiltheta_n^{\varrho,(2)} 
	+ & \alpha_{n+1}   \bigl[    [  \varrho_n I + A(n,\varrho)]  \tiltheta_n^{\varrho,(2)}  
	 -  (n+1)^\varrho    \alpha_n [I+A]    Z_{n+1} 
	\bigr] \,, && \tiltheta^{\varrho,(2)}_1 \!= \!  \Xi_1
	\label{e:clE-2}
	\end{align}%
	\label{e:auto2}%
\end{subequations}%
%\ad{Ana suggests we make the section heading MSE bounds?}
\subsection{Mean Square Error Bounds}
\label{s:mse}

 Fix the initial condition $(\Phi_0, \tiltheta_{0})$, and denote
 $\Cov(\theta_n) = \Expect[\tiltheta_n\tiltheta_n^\transpose]$ and $\Sigma_Z = \Expect_{\pi}[Z_n Z_n^\transpose]$. 
 The following summarizes bounds on the convergence rate of $\Expect[\|\tiltheta_n\|^2] = \trace(\Cov(\theta_n))$.
 % \rd{An eigenvalue $\lambda$ of $A$ is denoted $\lambda = -\varrho_0 + ui$. }
 
% We say that a deterministic sequence $\{\clE_n\}$ converges to zero at rate $1/n^\varrho$, if
%\begin{equation}
%\lim_{n\to\infty  } n^{\varrho'} \|\clE_n\|
%=
%\begin{cases}
%0, & \text{if } \;\varrho' < \varrho \\
%\infty, & \text{if }\; \varrho' > \varrho
%\end{cases}
%\end{equation}
 \begin{theorem}
 \label{t:SAlinearized}
 Suppose {(A1)}-{(A3)} hold. Then, for the linear recursion \eqref{e:SAlinearized},
 \begin{romannum}
 \item  If $\text{Real}(\lambda)<-\half$ for every eigenvalue $\lambda$ of $A$, then for some  $\delta=\delta(A,\Sigma_\Delta)>0$, 
 \[
 \Cov(\theta_n) = n^{-1} \Sigma_\theta  +  O( n^{-1-\delta})  \,,\qquad n\ge 0\,,
 \]
 where $\Sigma_\theta \geq 0$ is the solution to the Lyapunov equation \eqref{e:lyap}. Consequently, the rate of convergence of $\Expect[\|\tiltheta_n\|^2]$ is $1/n$.

 \item  Suppose there is an eigenvalue $\lambda$ of $A$ that satisfies {$-\varrho_0 = \text{Real}(\lambda)> -\half$}. Let $v \neq0$ denote a corresponding left eigenvector, 
 and suppose that $\Sigma_\Delta v \neq 0$.   Then, $\Expect[ |v^\transpose \tiltheta_n|^2]  $ converges to $0$ at  rate $n^{-2\varrho_0}$.
% \begin{equation}
% \label{e:vclE}
% \lim_{n\to\infty  } n^{2\varrho} \Expect[ |v^\transpose \tiltheta_n|^2]  
% =
% \begin{cases}
% 0, & \text{if } \;\varrho < \varrho_0 \\
% \infty, & \text{if }\; \varrho > \varrho_0
% \end{cases}
% \end{equation}
% Consequently, the rate of convergence to zero of $\Expect[\|\tiltheta_n\|^2]$ can be no faster than $n^{-\varrho_0}$.
 \qed
 \end{romannum}
 \end{theorem}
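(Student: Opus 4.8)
The plan is to prove both parts from the additive decomposition \eqref{e:unscaledDecomp}, $\tiltheta_n=\tiltheta_n^{(1)}+\tiltheta_n^{(2)}+\tiltheta_n^{(3)}$, in which $\tiltheta_n^{(1)}=\tiltheta_n^{\clM}$ is the martingale response, $\tiltheta_n^{(2)}=\Xi_n$ is the transformed telescoping response of \Lemma{t:Tele}, and $\tiltheta_n^{(3)}=-\alpha_nZ_{n+1}$. Throughout I use the representation $\Sigma_\Delta=\Expect_\pi[\Delta_n^m(\Delta_n^m)^\transpose]$ from \eqref{e:SigmaDeltaFish}, the fact that $Z_n=\haf(\Phi_n)$ is bounded in $L_2$ (\Theorem{t:Vuni}~(ii)), and that the transient second moment $\Sigma_{\Delta^m,n}\eqdef\Expect[\Delta_{n+2}^m(\Delta_{n+2}^m)^\transpose]\to\Sigma_\Delta$ geometrically fast by \eqref{e:Vuni}.

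\textbf{Part (i).} I take $\varrho=\half$, so $\half I+A$ is Hurwitz by hypothesis. Since $\Delta_{n+2}^m$ is a martingale difference and $\tiltheta_n^{(1)}$ is $\clF_{n+1}$-measurable, the cross terms vanish when one forms the covariance of the scaled recursion \eqref{e:clE-1}, and with $S_n\eqdef n\,\Cov(\tiltheta_n^{(1)})$ I obtain the closed matrix recursion
\[
S_{n+1}=S_n+\tfrac{1}{n+1}\bigl[\,\mathcal{L}(S_n)+\Sigma_{\Delta^m,n}\,\bigr]+O(n^{-2}),
\]
where $\mathcal{L}(S)\eqdef(\half I+A)S+S(\half I+A)^\transpose$. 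The Lyapunov operator $\mathcal{L}$ has eigenvalues $1+\lambda_i+\lambda_j$ (with $\lambda_i$ the eigenvalues of $A$), all with strictly negative real part under the hypothesis, and the unique zero of $\mathcal{L}(S)+\Sigma_\Delta=0$ is exactly the solution $\Sigma_\theta\ge0$ of \eqref{e:lyap}. The error $\widetilde S_n\eqdef S_n-\Sigma_\theta$ then solves a time-varying linear recursion driven by $\mathcal{L}$ plus the geometrically small term $\Sigma_{\Delta^m,n}-\Sigma_\Delta$ and the $O(n^{-2})$ remainder; a discrete Lyapunov/Gronwall estimate gives $\|\widetilde S_n\|=O(n^{-\delta})$ for any $\delta<\min(\delta_0,1)$, where $-\delta_0=1+2\max_i\Real(\lambda_i)<0$ is the spectral abscissa of $\mathcal{L}$. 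Hence $\Cov(\tiltheta_n^{(1)})=n^{-1}\Sigma_\theta+O(n^{-1-\delta})$. It remains to show the other two responses are negligible: $\Cov(\tiltheta_n^{(3)})=\alpha_n^2\Expect[Z_{n+1}Z_{n+1}^\transpose]=O(n^{-2})$, and, bounding the fundamental matrix of \eqref{e:Tele} by $\bigl\|\prod_{j=k+1}^n(I+\alpha_jA)\bigr\|\lesssim(k/n)^{a-\epsilon}$ with $a\eqdef-\max_i\Real(\lambda_i)>\half$, together with its $O(1/n)$ forcing, $\Cov(\Xi_n)=O(n^{-2\min(a,1)+\epsilon})=o(n^{-1})$. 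Cauchy--Schwarz controls all cross terms by $O(n^{-1-\delta''})$, and summing gives $\Cov(\theta_n)=n^{-1}\Sigma_\theta+O(n^{-1-\delta})$; taking the trace yields the rate $1/n$.

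\textbf{Part (ii).} Let $v\neq0$ satisfy $v^\transpose A=\lambda v^\transpose$ with $\Real(\lambda)=-\varrho_0\in(-\half,0)$, and project $y_n\eqdef v^\transpose\tiltheta_n$, which solves the scalar complex recursion
\[
y_{n+1}=(1+\alpha_{n+1}\lambda)\,y_n+\alpha_{n+1}\,v^\transpose\Delta_{n+1},
\]
with solution $y_n=\Psi_{n,n_0}y_{n_0}+\sum_{k>n_0}\Psi_{n,k}\alpha_k\,v^\transpose\Delta_k$, where $\Psi_{n,k}=\prod_{j=k+1}^n(1+\lambda/j)$ and $|\Psi_{n,k}|\asymp(k/n)^{\varrho_0}$. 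For the upper bound I split $v^\transpose\Delta_k=v^\transpose\Delta_{k+1}^m+v^\transpose(Z_k-Z_{k+1})$: the martingale part has variance $\sum_k|\Psi_{n,k}\alpha_k|^2\Expect[|v^\transpose\Delta_{k+1}^m|^2]\asymp n^{-2\varrho_0}\sum_k k^{2\varrho_0-2}$, a convergent sum since $\varrho_0<\half$, while summation by parts turns the telescoping part into a sum with weights $\asymp n^{-\varrho_0}k^{\varrho_0-2}$, again $O(n^{-\varrho_0})$ in $L_2$. Thus $\Expect[|y_n|^2]=O(n^{-2\varrho_0})$, so $n^{\varrho}\Expect[|y_n|^2]\to0$ for all $\varrho<2\varrho_0$. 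For the lower bound I show $\kappa\eqdef\lim_n n^{2\varrho_0}\Expect[|y_n|^2]$ exists and is strictly positive: writing $g_k\eqdef\lim_n n^{-\lambda}\Psi_{n,k}$ (so $|g_k|\asymp k^{\varrho_0}$), the phases cancel in $n^{2\varrho_0}\Psi_{n,k}\overline{\Psi_{n,l}}\to g_k\overline{g_l}$ because $n^{\lambda}n^{\bar\lambda}=n^{-2\varrho_0}$, whence $\kappa=\Expect[|W|^2]$ for the $L_2$ functional $W\eqdef\sum_k g_k\alpha_k\,v^\transpose\Delta_k$. Grouping the double sum by lag $m=k-l$ and using that the noise autocovariances decay geometrically while the weights vary slowly, the diagonal block contributes a positive multiple of $v^\transpose\Sigma_\Delta\bar v=v^*\Sigma_\Delta v=\|\Sigma_\Delta^{1/2}v\|^2$, which is strictly positive precisely because $\Sigma_\Delta v\neq0$. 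This gives $\liminf_n n^{2\varrho_0}\Expect[|y_n|^2]>0$, hence $n^{\varrho}\Expect[|y_n|^2]\to\infty$ for $\varrho>2\varrho_0$, completing the rate claim.

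\textbf{Main obstacle.} The delicate point is the lower bound in Part~(ii). In sharp contrast to the Hurwitz case of Part~(i)---where the telescoping response $\Xi_n$ is of strictly smaller order than the martingale response and therefore cannot interfere with the leading term---when $\varrho_0<\half$ the slow homogeneous decay $n^{-\varrho_0}$ makes the martingale and telescoping responses the \emph{same} order $n^{-\varrho_0}$ in $L_2$. A triangle-inequality lower bound consequently fails, and one must rule out exact cancellation. While the scalar variance recursion for the martingale part alone yields a clean positive limit, the total $\kappa=\Expect[|W|^2]$ is only manifestly nonnegative, so the crux is proving $W\neq0$ in $L_2$. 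I expect the cleanest route is to identify the leading contribution of the correlation double-sum with the asymptotic covariance $v^*\Sigma_\Delta v$ and to bound the off-diagonal corrections, so that the non-degeneracy hypothesis $\Sigma_\Delta v\neq0$ forces $\kappa>0$.
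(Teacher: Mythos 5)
Your part~(i) and the upper bound in part~(ii) track the paper's own argument closely: the same decomposition \eqref{e:unscaledDecomp} into martingale response, telescoping response, and the boundary term $-\alpha_nZ_{n+1}$; the same closed recursion for $n\,\Cov(\tiltheta_n^{(1)})$, which the paper analyzes by comparison with the limiting ODE \eqref{e:SigmaODE} while you run a discrete Gronwall estimate on $S_n-\Sigma_\theta$ (same substance, same $\delta$ determined by the spectral abscissa of the Lyapunov operator and the geometric convergence of $\Sigma_{\Delta^m,n}$); and your fundamental-matrix bound for $\Cov(\Xi_n)$ reproduces \Proposition{t:tele-3}~(i). Your summation-by-parts treatment of the telescoping noise in part~(ii) likewise matches \Proposition{t:mart-3}~(ii) and \Proposition{t:tele-3}~(ii), and the phase-cancellation observation $n^{\lambda}n^{\bar\lambda}=n^{-2\varrho_0}$ is correct. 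The genuine gap is exactly where you flag it: the lower bound in part~(ii).

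Two concrete problems with your route to $\kappa>0$. First, the limit of $n^{2\varrho_0}\Expect[|y_n|^2]$ is not $\Expect[|W|^2]$ but a positive multiple of $\Expect[|v^\transpose\tiltheta_{n_0}+W|^2]$: the initial term $\Psi_{n,n_0}\,v^\transpose\tiltheta_{n_0}$ survives the scaling, and since the noise is Markovian, $v^\transpose\tiltheta_{n_0}$ is correlated with every $\Delta_k$, $k>n_0$, so its cross terms with $W$ do not vanish. A triangle-inequality repair fails in part of the parameter range: your own upper bound gives $\Expect[|v^\transpose\tiltheta_{n_0}|^2]=O(n_0^{-2\varrho_0})$ while $\Expect[|W|^2]\asymp\sum_{k>n_0}k^{2\varrho_0-2}\asymp n_0^{2\varrho_0-1}$, and for $\varrho_0<\fourth$ the initial term dominates for every $n_0$, so cancellation against the initial condition cannot be excluded this way. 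The paper's missing device is the law of total variance, $\Expect[|v^\transpose\tiltheta_{n_0}+W|^2]\ge\Expect[\Var(W\mid\clF_{n_0+1})]$, which discards the $\clF_{n_0+1}$-measurable part outright (see \Lemma{t:unstable}). Second, your ``diagonal block plus small off-diagonal corrections'' step is the cancellation question restated rather than resolved: after the martingale/telescoping split both responses are of order $n^{-\varrho_0}$, and what actually saves the argument is a structural fact you never isolate --- the telescoping part enters the series with the differenced weights $\beta_k-\beta_{k+1}$, which are smaller than $\beta_k$ by a factor $\alpha_k$ (\Lemma{t:beta}) --- together with a variance bound $\Expect[\Var(\sum_k\gamma_kZ_k^v\mid\clF_{n_0+1})]\le K\sum_k|\gamma_k|^2$, with $Z_k^v=v^\transpose\haf(\Phi_k)$ and $K$ uniform in $n_0$ and the random conditioning state, which the paper proves via a Gershgorin eigenvalue estimate (\Lemma{t:var-bound}); your slowly-varying-weights heuristic does not deliver this uniformity. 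With these ingredients one gets $\Expect[\Var(W\mid\clF_{n_0+1})]\ge\half\sum_k|\beta_k|^2\sigma_{k+1}^2-K'\sum_k|\beta_k|^2\alpha_k^2$, and since $\sigma_k^2\to v^\transpose\Sigma_\Delta\overline{v}>0$ geometrically fast while the correction carries the extra $\alpha_k^2$, taking $n_0$ large closes the proof. Your lag-grouping computation would, if pushed through, identify the constant $\kappa$ exactly, but as written it assumes away both the initial-condition correlation and the martingale--telescoping cross terms, which are precisely the crux.
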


The proof of \Theorem{t:SAlinearized} is contained in \Section{s:proof_finite_n}.    
The following negative result is a direct corollary of \Theorem{t:SAlinearized} (ii):
\spm{Delete this paragraph for AISTATS}
\begin{corollary}
Suppose {(A1)}-{(A3)} hold. Moreover, suppose there is an eigenvalue $\lambda$ of $A$ that satisfies {$-\varrho_0 = \text{Real}(\lambda)> -\half$}, with corresponding left eigenvector $v$ satisfying $\Sigma_\Delta v \neq 0$. Then, $\Expect[\|\tiltheta_n\|^2]$ converges to zero at rate no faster than $1 / n^{2\varrho_0}$.
\end{corollary}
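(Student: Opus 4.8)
The plan is to derive the corollary directly from part~(ii) of \Theorem{t:SAlinearized} by a one-line norm-comparison argument: the squared projection of $\tiltheta_n$ onto the direction $v$ is a lower bound (up to a fixed constant) for the full squared error, so the slow rate established for the projection in part~(ii) is inherited by $\Expect[\|\tiltheta_n\|^2]$.

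First I would record the pointwise Cauchy--Schwarz inequality
\[
|v^\transpose \tiltheta_n|^2 \le \|v\|^2 \, \|\tiltheta_n\|^2 .
\]
Since $\lambda$ may be complex, the left eigenvector $v$ may be complex as well, so I would handle that case explicitly by writing $v = v_R + i v_I$ and noting that, because $\tiltheta_n$ is real,
\[
|v^\transpose\tiltheta_n|^2 = (v_R^\transpose\tiltheta_n)^2 + (v_I^\transpose\tiltheta_n)^2 \le (\|v_R\|^2+\|v_I\|^2)\,\|\tiltheta_n\|^2 = \|v\|^2\,\|\tiltheta_n\|^2,
\]
where $\|v\|^2$ is the (squared) Hermitian norm, which is finite and strictly positive because $v\neq 0$. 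Taking expectations and rearranging then gives the key lower bound
\[
\Expect[\|\tiltheta_n\|^2] \ge \frac{1}{\|v\|^2}\,\Expect[|v^\transpose\tiltheta_n|^2], \qquad n\ge 1 .
\]

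Next I would invoke part~(ii) of \Theorem{t:SAlinearized}: under the stated hypotheses $\Expect[|v^\transpose\tiltheta_n|^2]$ converges to zero at the exact rate $n^{-2\varrho_0}$. Only the ``divergence half'' of the rate definition is needed here, namely $\lim_{n\to\infty} n^{\varrho}\,\Expect[|v^\transpose\tiltheta_n|^2] = \infty$ for every $\varrho > 2\varrho_0$. Multiplying the displayed lower bound by $n^\varrho$ and passing to the limit yields $\lim_{n\to\infty} n^{\varrho}\,\Expect[\|\tiltheta_n\|^2] = \infty$ for every $\varrho > 2\varrho_0$, which is exactly the assertion that $\Expect[\|\tiltheta_n\|^2]$ decays no faster than $n^{-2\varrho_0}$.

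Because everything follows mechanically from part~(ii), there is no genuine obstacle in this corollary; the substantive work — establishing the precise polynomial rate of the scalar quantity $\Expect[|v^\transpose\tiltheta_n|^2]$ — is already carried by \Theorem{t:SAlinearized}~(ii). The only point demanding a moment of care is the bookkeeping for a complex eigenvector, ensuring that the constant $\|v\|^2$ in the lower bound is simultaneously finite and nonzero so that the divergence transfers cleanly from the projection to the full error.
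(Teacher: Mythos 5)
Your proposal is correct and matches the paper's intent exactly: the paper states this result as a ``direct corollary'' of \Theorem{t:SAlinearized}~(ii) without writing out the step, and the argument it leaves implicit is precisely your Cauchy--Schwarz comparison $\Expect[\|\tiltheta_n\|^2] \ge \|v\|^{-2}\,\Expect[|v^\transpose\tiltheta_n|^2]$ combined with the divergence half of the rate statement for the projection. Your explicit handling of a complex eigenvector ($v = v_R + i v_I$, with $\|v\|^2 > 0$) is a sensible piece of bookkeeping the paper omits, but it does not constitute a different route.
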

%\ad{Is this very obvious? I understand it is, but can we add a one-line explanation?}

\smallskip

\smallskip

One challenge in extension to nonlinear recursions is that the noise sequence depends on the parameter estimates (recall \eqref{e:linSA}).   This is true even for TD learning with linear function approximation (see \eqref{e:TD0_linSA_err} and surrounding discussion).   Extension to these recursions is obtained through coupling. 

Consider the error sequence for a random linear recursion 
\begin{equation}
\tiltheta^\circ_{n+1} = \tiltheta^\circ_n + \alpha_{n+1}[A_{n+1}\tiltheta^\circ_n + A_{n+1}\theta^*  - b_{n+1}] 
\label{e:linSAerr_An}
\end{equation}
subject to the following assumptions:  
\begin{itemize}
	\item[\textbf{(A4)}]
	The sequences $\{A_n,b_n\}$ are functions of the Markov chain: 
\[
A_n = \Amap(\Phi_n) \,,\quad 
b_n = \Bmap(\Phi_n) \,,  
\]
which satisfy  $\| \Amap_{i,j}^2\|_V<\infty$,  $\| \Bmap_i^2\|_V<\infty$ for each $1\le i,j \le d$.    The steady state means  are denoted $A = \Expect_\pi[A_n]$,  $b = \Expect_\pi[b_n]$.  Moreover, the matrix $A$ is Hurwitz, and $\theta^* = A^{-1} b$. 
\end{itemize}

\begin{theorem}
\label{t:TD}
Suppose Assumptions A1-A4 hold. If the matrix $\half I + A$ is Hurwitz, the error bound \eqref{e:finite-n1} holds for $\{\tiltheta^\circ_n\}$ obtained from \eqref{e:linSAerr_An}, with $\Delta_{n+1} = A_{n+1}\theta^* - b_{n+1}$.
%\begin{romannum}
%	\item Suppose there exists an eigenvalue that satisfies $-\varrho_0 = Real(\lambda) > -\half$. Let $v^\transpose \neq 0$ denote the corresponding left eigenvector, and suppose that $\Sigma_{\Delta}v\neq 0$. Then,  the convergence rate $n^{-2\varrho_0}$ holds for $\Expect[\|\tiltheta^\circ_n\|^2]$. 
%\end{romannum}
\end{theorem}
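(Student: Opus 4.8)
The plan is to reduce \Theorem{t:TD} to the already-established \Theorem{t:SAlinearized}~(i) by coupling the random-matrix recursion \eqref{e:linSAerr_An} with a reference linear recursion driven by the \emph{deterministic} matrix $A=\Expect_\pi[A_n]$. Writing $A_{n+1}=A+\tilde A_{n+1}$ with $\tilde A_{n+1}\eqdef A_{n+1}-A$ (so $\Expect_\pi[\tilde A_{n+1}]=0$), recursion \eqref{e:linSAerr_An} becomes
\[
\tiltheta^\circ_{n+1} = \tiltheta^\circ_n + \alpha_{n+1}\bigl[\, A\tiltheta^\circ_n + \Delta_{n+1} + \tilde A_{n+1}\tiltheta^\circ_n \,\bigr], \qquad \Delta_{n+1} = A_{n+1}\theta^* - b_{n+1}.
\]
Alongside this I would run the reference recursion \eqref{e:SAlinearized}, $\tiltheta_{n+1}=\tiltheta_n+\alpha_{n+1}[A\tiltheta_n+\Delta_{n+1}]$, driven by the same chain $\bfPhi$ and started from $\tiltheta_0=\tiltheta^\circ_0$. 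Setting $\fSA^*(z)=\Amap(z)\theta^*-\Bmap(z)$, assumption (A4) together with $\theta^*=A^{-1}b$ gives $\pi(\fSA^*)=A\theta^*-b=0$, and the $V$-square bounds in (A4) give $\|(\fSA^*_i)^2\|_V<\infty$; hence (A4) supplies (A3) for this $\fSA^*$, while (A1)--(A2) are part of A1--A4. Since $\half I+A$ is Hurwitz, \Theorem{t:SAlinearized}~(i) applies to the reference recursion and yields $\Cov(\theta_n)=n^{-1}\Sigma_\theta+O(n^{-1-\delta})$ and, in particular, $\Expect[\|\tiltheta_n\|^2]=O(1/n)$.

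The coupling error $e_n\eqdef\tiltheta^\circ_n-\tiltheta_n$ then satisfies the linear recursion
\[
e_{n+1} = e_n + \alpha_{n+1}\bigl[\, A e_n + \tilde A_{n+1}\tiltheta^\circ_n \,\bigr], \qquad e_0 = 0,
\]
so $\{e_n\}$ is itself a linear SA recursion with the Hurwitz matrix $A$ but with the \emph{multiplicative} noise $\tilde A_{n+1}\tiltheta^\circ_n$. The decisive point is that this noise is asymptotically negligible compared with $\Delta_n$ because $\tiltheta^\circ_n\to0$, so it contributes only at higher order to the covariance. The complication is that $\Expect[\tilde A_{n+1}\mid\Phi_n]$ is not zero (only the steady-state mean vanishes), so a naive quadratic-Lyapunov bound produces a cross term of order $\alpha_{n+1}=1/n$ that would compete directly with the contraction. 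To neutralize it I would apply the martingale/Poisson decomposition of \Section{s:notation} to the entries of $\tilde A_{n+1}$: the martingale-difference component, multiplied by the $\clF_n$-measurable $\tiltheta^\circ_n$, remains a martingale difference (so its conditional mean is zero), and the telescoping remainder is handled by summation by parts exactly as in the change of variables behind \Lemma{t:Tele}.

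With the decomposition in hand, I would first establish the a priori rate $\Expect[\|\tiltheta^\circ_n\|^2]=O(1/n)$ for \eqref{e:linSAerr_An}: using $\Expect[\|\tilde A_{n+1}\tiltheta^\circ_n\|^2\mid\clF_n]\le C\,V(\Phi_n)\|\tiltheta^\circ_n\|^2$ (a consequence of $\|\Amap_{i,j}^2\|_V<\infty$ in (A4) and the drift bound $\Expect[V(\Phi_{n+1})\mid\Phi_n]\le C_V V(\Phi_n)$ implied by (A1)), the $\half I+A$ Hurwitz property yields a recursive inequality of the form $a_{n+1}\le(1-c/n)a_n+O(n^{-2})$ with $c>1$ for a suitable quadratic Lyapunov value $a_n$ comparable to $\Expect[\|\tiltheta^\circ_n\|^2]$, whence $\Expect[\|\tiltheta^\circ_n\|^2]=O(1/n)$ and therefore $\Expect[\|\tilde A_{n+1}\tiltheta^\circ_n\|^2]=O(1/n)$. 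Re-running the scaling machinery of \Theorem{t:SAlinearized} on the $e_n$-recursion, now with an input whose second moment decays like $1/n$ rather than being bounded away from zero, gives $\Expect[\|e_n\|^2]=O(n^{-1-\delta'})$ for some $\delta'>0$. Finally, from $\tiltheta^\circ_n=\tiltheta_n+e_n$,
\[
\Cov(\theta^\circ_n) = \Cov(\theta_n) + \Expect[\tiltheta_n e_n^\transpose] + \Expect[e_n \tiltheta_n^\transpose] + \Expect[e_n e_n^\transpose],
\]
and Cauchy--Schwarz bounds the cross terms by $\sqrt{\Expect[\|\tiltheta_n\|^2]\,\Expect[\|e_n\|^2]}=O(n^{-1-\delta'/2})$ while the last term is $O(n^{-1-\delta'})$; collecting estimates gives $\Cov(\theta^\circ_n)=n^{-1}\Sigma_\theta+O(n^{-1-\delta''})$ with $\delta''=\min(\delta,\delta'/2)>0$, which is precisely \eqref{e:finite-n1}.

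The main obstacle I anticipate is the treatment of the multiplicative noise $\tilde A_{n+1}\tiltheta^\circ_n$, which differs from the additive noise $\Delta_n$ in two essential ways: it is correlated with the current iterate, and it is only $V$-square-integrable rather than bounded. Controlling it forces joint control of the chain's Lyapunov function $V(\Phi_n)$ and the parameter error $\|\tiltheta^\circ_n\|^2$ (for instance by carrying $\Expect[V(\Phi_n)\|\tiltheta^\circ_n\|^2]$ as a coupled quantity), and it requires that the Poisson decomposition genuinely remove the order-$1/n$ cross term while its telescoping remainder, handled by summation by parts, does not degrade the rate. These are the two places where the unbounded Markovian noise makes the estimates delicate; the rest of the argument is a bookkeeping of rates on top of \Theorem{t:SAlinearized}.
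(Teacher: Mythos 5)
Your proposal matches the paper's own proof in all essentials: the paper establishes \Theorem{t:TD} by coupling \eqref{e:linSAerr_An} with the reference recursion $\tiltheta^\bullet_{n+1}=\tiltheta^\bullet_n+\alpha_{n+1}[A\tiltheta^\bullet_n+\Delta_{n+1}]$ driven by the same chain, analyzing the difference recursion \eqref{e:clE-td-err} through exactly the Poisson/martingale decomposition of $A_{n+1}-A$ with summation by parts that you describe (\Section{s:couple-td}, culminating in \Theorem{t:couple-td}), and then combining with \Theorem{t:SAlinearized}~(i) via Cauchy--Schwarz on the cross terms. The only difference is bookkeeping order --- the paper first proves the rougher a priori bound $\|\tiltheta^\circ_n\|_T^2=O(n^{-\varrho})$ for $\varrho<\min(\varrho_0,1)$ (\Lemma{t:clE-td-err}) and bootstraps inside the coupling-error analysis, rather than claiming $\Expect[\|\tiltheta^\circ_n\|^2]=O(1/n)$ upfront as you do --- and your explicit flagging of the correlation between $\Delta^{\clA}_{n+2}$ and $\tiltheta^\circ_n$ (requiring joint control with $V(\Phi_n)$) is if anything more careful than the paper's own treatment of that step.
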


The proof of the theorem is via coupling with \eqref{e:SAlinearized}.     For this we write \eqref{e:linSAerr_An} in the suggestive form
\begin{equation}
\tiltheta^\circ_{n+1} = \tiltheta^\circ_n + \alpha_{n+1}[A\tiltheta^\circ_n + \Delta_{n+1}  + (A_{n+1}-A)\tiltheta^\circ_n]\,,\qquad \Delta_{n+1} = A_{n+1}\theta^* - b_{n+1} 
\label{e:TD0_linSA_err-2}
\end{equation}
With common initial condition  $\Phi_0$,  the sequence $\{\tiltheta^\circ_n\}$  is compared with the   error sequence $\{\tiltheta^\bullet_n\}$ for the corresponding  linear SA algorithm:
\[
\tiltheta^\bullet_{n+1} = \tiltheta^\bullet_n + \alpha_{n+1}[A\tiltheta^\bullet_n + \Delta_{n+1}]
\]

The difference sequence $\{\clE_n \eqdef \tiltheta^\circ_n - \tiltheta^\bullet_n\}$ evolves according to \eqref{e:SAlinearized}, but with a vanishing noise sequence:
\begin{equation}
\clE_{n+1} = \clE_n + \alpha_{n+1}[A\clE_n + (A_{n+1}-A)\tiltheta^\circ_n]
\label{e:clE-td-err}
\end{equation}
By decomposing $A_{n+1}-A$ into martingale difference  and telescoping sequences based on Poisson's equation,   the technique used to prove \Theorem{t:SAlinearized} can be used to obtain the following bound on the mean-square coupling error. %The   proof is contained in Appendix~\ref{s:couple-td}. 

Let  $\lambda = -\varrho_0 + ui$  denote an eigenvalue of the matrix $A$ with largest real part (i.e., $\varrho_0$ is minimal).
\begin{theorem}
\label{t:couple-td}
Under Assumptions (A1)-(A4), 
\begin{romannum}
\item   
$\displaystyle 
\limsup_{n\rightarrow \infty} n^2\Expect[\clE_n^\transpose\clE_n] < \infty
$   if $\varrho_0>1$.

\item $\displaystyle 
  \limsup_{n\rightarrow\infty} n^{2\varrho} \Expect[\clE_n^\transpose\clE_n] <\infty $  
for all $\varrho  < \varrho_0 $,   provided $\varrho_0 \le 1$.
\end{romannum} 
\qed
\end{theorem}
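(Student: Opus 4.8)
The plan is to treat the coupling recursion \eqref{e:clE-td-err} as an instance of the linear SA recursion \eqref{e:SAlinearized} whose driving ``noise'' is the product $G_{n+1} \eqdef (A_{n+1}-A)\tiltheta^\circ_n$, and to run the machinery behind \Theorem{t:SAlinearized} on it. The essential point is that $G_{n+1}$ is a \emph{vanishing} Markovian noise: it is the chain functional $A_{n+1}-A=\Amap(\Phi_{n+1})-A$ multiplied by the adapted, decaying process $\tiltheta^\circ_n$, so the smaller $\tiltheta^\circ_n$ is, the smaller the forcing on $\clE_n$. The size of $\clE_n$ is therefore governed by two competing rates: the homogeneous decay set by the spectrum of $A$ (slowest mode $n^{-2\varrho_0}$), and a forced rate inherited from the current bound on $\Expect[\|\tiltheta^\circ_n\|^2]$.

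Writing $\Theta_n\eqdef\tiltheta^\circ_n$, I would proceed in three steps. First I would establish an a priori bound $\sup_n\Expect[\|\Theta_n\|^2]<\infty$ directly from \eqref{e:linSAerr_An}: since $A$ is Hurwitz, a Lyapunov function $\theta^\transpose P\theta$ with $A^\transpose P+PA=-I$, combined with the $V$-weighted moment bounds of (A4), yields $L_2$-boundedness (and the $V$-weighted moments needed below) to seed the argument. Second, mirroring \Section{s:notation}, I would solve the matrix-valued Poisson equation for $\haAmap$, namely $\Expect[\haAmap(\Phi_{k+1})\mid\Phi_k=z]=\haAmap(z)-(\Amap(z)-A)$, and decompose $A_{n+1}-A=M_{n+1}+W_n-W_{n+1}$ with $W_n=\haAmap(\Phi_n)$ and $M_{n+1}$ a martingale difference. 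Since $\Theta_n$ is $\clF_n$-measurable, $G^m_{n+1}\eqdef M_{n+1}\Theta_n$ is again a martingale-difference sequence, and the telescoping remainder is absorbed by the change of variables of \Lemma{t:Tele}, the extra $O(\alpha_n)$ increments of $\Theta_n$ contributing only higher-order terms.

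Third, I would run the covariance analysis of \Theorem{t:SAlinearized} on \eqref{e:clE-td-err}. With $P_n=\Expect[\clE_n\clE_n^\transpose]$, the martingale part contributes forcing $\alpha_{n+1}^2\Expect[G^m_{n+1}(G^m_{n+1})^\transpose]=O(n^{-2}\Expect[\|\Theta_n\|^2])$, with the telescoping part no larger. If $\Expect[\|\Theta_n\|^2]=O(n^{-\sigma})$, the shifted-Lyapunov computation underlying \Theorem{t:SAlinearized} gives $\Expect[\|\clE_n\|^2]=O(n^{-\kappa+\epsilon})$ with $\kappa=\min(2\varrho_0,1+\sigma)$, the forced exponent $1+\sigma$ arising from a particular solution $Kn^{-(1+\sigma)}$ that exists precisely when $A+\tfrac{1+\sigma}{2}I$ is Hurwitz. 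Feeding this back through $\Theta_n=\tiltheta^\bullet_n+\clE_n$, together with the mean-square rate of $\tiltheta^\bullet_n$ from \Theorem{t:SAlinearized} (which is $O(n^{-\min(2\varrho_0,1)})$ up to $\epsilon$), a finite bootstrap (two rounds suffice) raises the a priori exponent $\sigma$ to its fixed point $\sigma^*=\min(2\varrho_0,1)$, whence $\kappa=\min(2\varrho_0,2)$. This yields $\Expect[\|\clE_n\|^2]=O(n^{-2\varrho_0+\epsilon})$ when $\varrho_0\le1$, which is (ii). For (i), where $\varrho_0>1$, the exponent caps at the boundary $\kappa=2$ and the $\epsilon$ must be removed; here I would instead solve the Lyapunov recursion for its exact $n^{-2}$ particular solution $(A+I)K+K(A+I)^\transpose=-S$ (a fixed matrix $S$), which is solvable because $A+I$ is Hurwitz when $\varrho_0>1$, and whose homogeneous part decays at the strictly faster rate $n^{-2\varrho_0}$. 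This gives the sharp $\limsup_n n^2\Expect[\clE_n^\transpose\clE_n]<\infty$.

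The hard part will be the multiplicative, state-dependent structure of $G_{n+1}=(A_{n+1}-A)\Theta_n$, which is absent in \Theorem{t:SAlinearized}, where the noise is a pure functional of the chain. Because $A_{n+1}-A$ is only $V$-norm bounded (not uniformly bounded) and is correlated with the history that determines $\Theta_n$, controlling $\Expect[\|G^m_{n+1}\|^2]$ and the cross-correlations between $\clE_n$ and the telescoping term requires $V$-weighted moment estimates on $\Theta_n$ and careful Cauchy--Schwarz/H\"older bounds; one must also verify that the $O(\alpha_n)$ drift of $\Theta_n$ does not corrupt the telescoping cancellation, and that the bootstrap closes in finitely many steps. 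These moment-bookkeeping issues, rather than the Lyapunov algebra, are where the bulk of the work lies.
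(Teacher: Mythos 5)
Your proposal follows essentially the same route as the paper's appendix proof: the paper solves the same matrix Poisson equation for $\Amap$, splits $(A_{n+1}-A)\tiltheta^\circ_n$ into a martingale-difference term $\Delta^{\clA}_{n+2}\tiltheta^\circ_n$, a telescoping term handled by the change of variables of \Lemma{t:Tele}, and precisely the $O(\alpha_n)$-drift correction you flag (the components $\clE_n^{(3)},\clE_n^{(4)}$), and then bootstraps as you describe --- a rough bound on $\Expect[\|\tiltheta^\circ_n\|^2]$ (\Lemma{t:clE-td-err}) feeding a refined shifted-Lyapunov/product-bound analysis in which the martingale component supplies the $n^{-2}$ cap when $\varrho_0>1$, obtained there via the integral estimate \eqref{e:partsum-int} rather than your exact particular-solution Lyapunov equation, a cosmetic difference. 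The one caveat is your first step: a quadratic Lyapunov function applied directly to \eqref{e:linSAerr_An} does not give $L_2$-boundedness, since the cross term $\Expect[\Theta_n^\transpose P(A_{n+1}-A)\Theta_n]$ is neither mean-zero nor $O(\alpha_{n+1}^2)$ and $\|A_{n+1}-A\|$ has no uniform bound under (A4), so the a priori bound must itself pass through the Poisson decomposition --- which is exactly how the paper derives it (bounding $\clE^{\text{tot}}_n$ and using $\tiltheta^\circ_n=\clE_n+\tiltheta^\bullet_n$ with the rate for $\tiltheta^\bullet_n$ from \Lemma{t:converge-sa}), and which your own later steps already supply.
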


\Theorem{t:couple-td}  provides a remarkable bound when $\rho_0>1$:  it immediately implies  \Theorem{t:TD} because the mean-square coupling error $\Expect[\|\clE_n]\|^2$ tends to zero at rate no slower than $n^{-2}$,  which is far faster than  $\Expect[\|\tiltheta^\bullet_n\|^2] \approx \trace(\Sigma_\theta) n^{-1}   $  (implied by  \Theorem{t:SAlinearized}).

An alert reader may observe that Theorems~\ref{t:TD} and \ref{t:couple-td} leave out a special case:   consider $\rho_0<\half$,  so that the rate of convergence of $\Expect[\|\tiltheta^\bullet_n\|^2]$ is the sub-optimal value $n^{-2\rho_0}$.   The bound obtained in \Theorem{t:couple-td} remains valuable, in the sense that it combined with  \Theorem{t:SAlinearized}~(ii) implies the rate of convergence of $\Expect[\|\tiltheta^\circ_n]\|^2$  is no slower than $n^{-2\rho_0}$.  However, because $\Expect[\|\clE_n]\|^2$  and $\Expect[\|\tiltheta^\bullet_n\|^2] $ tend to zero at the same rate,   we cannot rule out the possibility that  $ \tiltheta^\circ_n =  \clE_n  + \tiltheta^\bullet_n$ converges to zero much faster.    In particular, it remains to prove that  if there is an eigenvalue $\lambda$ of $A$ that satisfies {$-\varrho_0 = \text{Real}(\lambda)> -\half$},  and an eigenvector $v \neq0$ satisfying $\Sigma_\Delta v \neq 0$,  then, $\Expect[ |v^\transpose \tiltheta_n^\circ|^2]  $ converges to $0$ at  rate $n^{-2\varrho_0}$.

\subsection{Implications}
\label{s:imp}

\Theorem{t:SAlinearized} indicates that the convergence rate of $\Cov(\theta_n)$ is determined jointly by the matrix $A$, and the martingale difference component of the noise sequence $\{\Delta_n\}$. Convergence of $\{\tiltheta_n\}$ can be slow if the matrix $A$ has eigenvalues close to zero. 

The result also explains the slow convergence of some reinforcement learning algorithms. For instance, the matrix $A$ in Watkins' Q-learning has at least one eigenvalue with real part greater than or equal to $-(1-\beta)$, where $\beta$ is the discount factor appearing in the Markov decision process \cite{wat89,devmey17a,dev19}. Since $\beta$ is usually close to one, \Theorem{t:SAlinearized} implies that the convergence rate of the algorithm is much slower than $n^{-1}$.  Under the assumption that $A$ is Hurwitz, the $1/n$ convergence rate is guaranteed by the use of a modified step-size sequence $\alpha_n = g/n$, with $g > 0$ chosen so that the matrix $\half I + gA$ is Hurwitz. \Corollary{c:main_g} follows directly from \Theorem{t:SAlinearized} (i).

\begin{corollary}
\label{c:main_g}
Let $g$ be a constant such that $\half I + gA$ is Hurwitz, and $\{\tiltheta_n\}$ be recursively obtained as
\[
\tiltheta_{n+1}  = \tiltheta_n + \frac{g}{n+1}[A \tiltheta_n  + \Delta_{n+1}  ] 
\]
Then,  for some $\delta=\delta(A,g, \Sigma_\Delta)>0$,
\[
\Cov(\theta_n) = \Expect[\tiltheta_n\tiltheta_n^\transpose] = n^{-1}\Sigma_\theta^g + O(n^{-1-\delta})
\]
where $\Sigma_\theta^g \geq 0$ solves the Lyapunov equation
\[
[\half I + gA]\Sigma + \Sigma [\half I + gA]^\transpose + g^2\Sigma_{\Delta}=0
\]
\qed
\end{corollary}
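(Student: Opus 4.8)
The plan is to reduce the corollary to a direct application of \Theorem{t:SAlinearized}~(i) by rescaling the matrix and the noise. First I would observe that the given recursion can be written with the canonical step-size $\alpha_n = 1/n$ by absorbing the constant $g$ into the increment:
\[
\tiltheta_{n+1} = \tiltheta_n + \frac{1}{n+1}\bigl[ (gA)\tiltheta_n + g\Delta_{n+1} \bigr]\,.
\]
This is exactly the linear SA recursion \eqref{e:SAlinearized} with the matrix $A$ replaced by $A^g \eqdef gA$ and the noise replaced by $\Delta^g_{n+1} \eqdef g\Delta_{n+1}$, the latter corresponding to the noise function $g\fSA^*$ in the notation of \eqref{e:f}.

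Next I would verify that Assumptions (A1)--(A3) carry over to this rescaled system. Assumption (A1) concerns only the Markov chain $\bfPhi$ and is untouched. For (A2), since $g>0$ and $A$ is Hurwitz, the eigenvalues of $A^g = gA$ are $g$ times those of $A$ and hence still lie in the open left half-plane, so $A^g$ is Hurwitz; the step-size is again $1/n$. For (A3), the rescaled noise function satisfies $\| (g\fSA^*_i)^2 \|_V = g^2 \| (\fSA^*_i)^2 \|_V < \infty$ and $\pi(g\fSA^*_i) = g\,\pi(\fSA^*_i) = 0$ for each $i$. Finally, the eigenvalue hypothesis of \Theorem{t:SAlinearized}~(i) requires $\text{Real}(\lambda) < -\half$ for every eigenvalue $\lambda$ of $A^g = gA$; this is precisely the statement that $\half I + gA$ is Hurwitz, which is the hypothesis of the corollary.

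With the assumptions in place, I would apply \Theorem{t:SAlinearized}~(i) to conclude that $\Cov(\theta_n) = n^{-1}\Sigma + O(n^{-1-\delta})$ for some $\delta>0$ (depending on $A$, $g$, and $\Sigma_\Delta$, since $gA$ and the noise covariance are determined by these), where $\Sigma \ge 0$ solves the Lyapunov equation \eqref{e:lyap} with $A$ replaced by $gA$ and $\Sigma_\Delta$ replaced by the asymptotic covariance of $\{\Delta^g_n\}$. The one substantive computation is that this asymptotic covariance equals $g^2\Sigma_\Delta$: directly from the definition \eqref{e:SigmaDelta}, scaling each $\Delta_k$ by $g$ scales every term $\Expect_\pi[\Delta_k\Delta_0^\transpose]$ by $g^2$, so $\Sigma_{\Delta^g} = g^2\Sigma_\Delta$. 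Substituting $A^g = gA$ and $\Sigma_{\Delta^g} = g^2\Sigma_\Delta$ into \eqref{e:lyap} yields exactly
\[
[\half I + gA]\Sigma + \Sigma[\half I + gA]^\transpose + g^2\Sigma_\Delta = 0\,,
\]
identifying $\Sigma = \Sigma_\theta^g$ and completing the argument. Since every step is a straightforward transfer of hypotheses or an elementary scaling, there is no genuine obstacle; the only point requiring care is the bookkeeping that the asymptotic covariance scales quadratically in $g$, which I would confirm against \eqref{e:SigmaDelta} (equivalently \eqref{e:SigmaDeltaFish}, noting that the martingale-difference component also satisfies $\Delta^{g,m}_n = g\Delta^m_n$).
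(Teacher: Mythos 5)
Your proposal is correct and follows exactly the route the paper intends: the paper's ``proof'' is the single remark that the corollary follows directly from \Theorem{t:SAlinearized}~(i), and your rescaling $A \mapsto gA$, $\Delta_n \mapsto g\Delta_n$ with the verification of (A1)--(A3) and the observation $\Sigma_{\Delta^g} = g^2\Sigma_\Delta$ is precisely the bookkeeping that remark leaves implicit. The only superfluous step is invoking $g>0$ to argue $gA$ is Hurwitz; that already follows from the hypothesis that $\half I + gA$ is Hurwitz.
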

We can also ensure the $1/n$ convergence rate by using a matrix gain. Provided $A$ is invertible, and if it is known beforehand, $\alpha_n = -A^{-1} / n$ is the optimal step-size sequence (in terms of minimizing the asymptotic covariance)~\cite{benmetpri90,kusyin97,devbusmey20}. The SQNR algorithm of \cite{rup85} and the Zap-SNR algorithm \cite{devmey17a,dev19} provide general approaches to recursively estimate the optimal matrix gain.

The next subsection is dedicated to the proof of \Theorem{t:SAlinearized}. The proofs of the technical results are contained in the Appendix \ref{s:app-msr}.

\subsection{Proof of  \Theorem{t:SAlinearized}}
\label{s:proof_finite_n}

Denote $\Cov(\theta_n^{(i)}) = \Expect[\tiltheta_n^{(i)}(\tiltheta_n^{(i)})^\transpose]$ and $\Sigma_n^{\varrho,(i)} =\Expect[\tiltheta^{\varrho,(i)}(\tiltheta^{\varrho,(i)})^\transpose] = n^{2\varrho}\Cov(\theta_n^{(i)})$ for each $i$ in \eqref{e:scaledDecomp}. The proof proceeds by establishing the convergence rate for each $\Cov(\theta_n^{(i)})$. The main challenges are the first two: $\Cov(\theta_n^{(1)})$ and $\Cov(\theta_n^{(2)})$, for which explicit bounds are obtained by studying recursions of the scaled sequences. 
Bounding $\tiltheta_n^{{(3)}} = -\alpha_n Z_{n+1}$ is trivial.

\subsubsection{The martingale difference term}  

\begin{proposition} 
\label{t:mart-3}
Under  (A1)-(A3),
\begin{romannum}
\item If $\text{Real}(\lambda)<-\half$ for every eigenvalue $\lambda$ of $A$, then
\[
\Cov(\theta_n^{(1)}) = n^{-1}\Sigma_\theta + O(n^{-1-\delta})
\]
where $\delta = \delta(\half I + A,\Sigma_\Delta) >0$, and $\Sigma_\theta$ is the solution to the Lyapunov equation \eqref{e:lyap}.

\item Suppose there is an eigenvalue $\lambda$ of $A$, that satisfies {$-\varrho_0 = \text{Real}(\lambda)> -\half$}. Let  $v \neq 0 $ denote the corresponding left eigenvector, and suppose moreover that $ \Sigma_\Delta v \neq 0$. Then, $\Expect[  | v^\transpose \tiltheta_n^{(1)} |^{2}] $ converges to $0$ at rate $n^{-2\varrho_0}$.
%\[ 
%\begin{aligned}
%\limsup_{n\to\infty}  n^{2\varrho_0} \Expect[  | v^\transpose \tiltheta_n^{(1)} |^{2}] <\infty
%\end{aligned}
%\]
%Consequently, $\lim_{n\to\infty} n^{2\varrho} \Expect[  | v^\transpose \tiltheta_n^{(1)} |^{2}] =0$ for $\varrho < \varrho_0$.
%\sh{The $\liminf > 0$ comes from the old proof. Not needed here since the unstable result is proved independently.}
\end{romannum}
\qed
\end{proposition}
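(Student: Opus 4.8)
The plan is to use the martingale structure of \eqref{e:clEa-1} to reduce the problem to a deterministic matrix recursion for $\Cov(\theta_n^{(1)})$, and then to analyze that recursion by scaling. Since $\{\Delta_{n+2}^m\}$ is a martingale difference sequence and $\tiltheta_n^{(1)}$ is $\clF_{n+1}$-measurable, the cross terms vanish upon forming the outer product in \eqref{e:clEa-1}, giving the exact recursion
\[
\Cov(\theta_{n+1}^{(1)}) = (I+\alpha_{n+1}A)\,\Cov(\theta_n^{(1)})\,(I+\alpha_{n+1}A)^\transpose + \alpha_{n+1}^2\,\Sigma_\Delta^{(n)}\,,
\]
where $\Sigma_\Delta^{(n)} \eqdef \Expect[\Delta_{n+2}^m(\Delta_{n+2}^m)^\transpose]$ and, by (A1) together with \eqref{e:SigmaDeltaFish}, $\Sigma_\Delta^{(n)} \to \Sigma_\Delta$ geometrically fast. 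With $\alpha_n = 1/n$ this is a nonstationary linear recursion whose homogeneous part, after vectorization, is generated by the Kronecker operator $\mathcal{A} = A\otimes I + I\otimes A$.

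For part (i) I would pass to the scaled covariance $\Sigma_n^{1/2,(1)} = n\,\Cov(\theta_n^{(1)})$, whose recursion follows from the display above (equivalently from \eqref{e:clE-1} with $\varrho = \half$). Its forcing is now $O(1/n)$ with limiting coefficient $\Sigma_\Delta$, and its homogeneous generator is the Lyapunov operator $\mathcal{L}(S) = (\half I + A)S + S(\half I + A)^\transpose$, equivalently $\mathcal{A}+I$ after vectorization. Because $\half I + A$ is Hurwitz by hypothesis, every eigenvalue of $\mathcal{L}$ has negative real part, so $\mathcal{L}$ is invertible and the fixed-point equation $\mathcal{L}(S) + \Sigma_\Delta = 0$ is exactly the Lyapunov equation \eqref{e:lyap}; its unique solution is $\Sigma_\theta$. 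Writing $E_n = \Sigma_n^{1/2,(1)} - \Sigma_\theta$ and substituting the fixed-point relation, $E_n$ obeys a perturbed linear recursion $E_{n+1} = E_n + \tfrac1n\mathcal{L}(E_n) + g_n$, where the forcing $g_n = O(n^{-2})$ collects the discretization terms, the drift-approximation corrections $A(n,\half)-A$ and $\varrho_n-\half$ supplied by \Lemma{t:taylor-scale}, and the geometrically small $\Sigma_\Delta^{(n)} - \Sigma_\Delta$. Reading the decay off the spectral gap $2\eta \eqdef -2\max_i \Real(\lambda_i + \half) > 0$: the fundamental product $\prod_k(I + \tfrac1k\mathcal{L})$ has norm of order $n^{-2\eta}$, and convolving it against $g_n$ yields $\|E_n\| = O(n^{-\min(1,2\eta)})$, so that $\Cov(\theta_n^{(1)}) = n^{-1}\Sigma_\theta + O(n^{-1-\delta})$ with $\delta = \delta(\half I + A,\Sigma_\Delta) > 0$ (the dependence on $\Sigma_\Delta$ entering only when the forcing avoids the slowest mode of $\mathcal{L}$).

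For part (ii) I would project onto the left eigenvector $v$. Setting $y_n = v^\transpose\tiltheta_n^{(1)}$ and using $v^\transpose A = \lambda v^\transpose$, \eqref{e:clEa-1} collapses to the scalar recursion $y_{n+1} = (1+\alpha_{n+1}\lambda)y_n + \alpha_{n+1}\,v^\transpose\Delta_{n+2}^m$. Squaring and invoking the martingale property once more removes the cross term, so $p_n \eqdef \Expect[|y_n|^2]$ satisfies
\[
p_{n+1} = |1+\alpha_{n+1}\lambda|^2\,p_n + \alpha_{n+1}^2\,\Expect[|v^\transpose\Delta_{n+2}^m|^2]\,,
\]
with $|1+\alpha_{n+1}\lambda|^2 = 1 - 2\varrho_0/(n+1) + O(n^{-2})$ and forcing converging to $v^*\Sigma_\Delta v$, which is strictly positive precisely because $\Sigma_\Delta v \neq 0$. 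Since $2\varrho_0 < 1$, solving this scalar recursion through the product $\prod_k(1 - 2\varrho_0/(k+1) + O(k^{-2}))$ — of order $n^{-2\varrho_0}$ — and summing the (convergent) forcing series gives $p_n = C\,n^{-2\varrho_0}(1+o(1))$; the coefficient $C$ is strictly positive because the initial contribution and every forcing term are nonnegative while the forcing constant $v^*\Sigma_\Delta v$ is strictly positive, which supplies the lower bound that rules out faster decay and pins the exact rate $n^{-2\varrho_0}$. The main obstacle I anticipate is the sharp asymptotic analysis of these nonstationary linear recursions: controlling the fundamental products $\prod_k(I + \tfrac1k\mathcal{A})$ and their scalar analogue precisely enough to extract both the exact leading coefficient and a remainder with strictly positive exponent, while absorbing the higher-order-in-$1/n$ drift corrections from \Lemma{t:taylor-scale} and the geometrically decaying error $\Sigma_\Delta^{(n)} - \Sigma_\Delta$; in part (ii) the delicate step is the matching lower bound, where the hypothesis $\Sigma_\Delta v \neq 0$ is exactly what keeps the positive forcing from being dominated by the homogeneous decay.
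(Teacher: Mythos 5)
Your proposal is correct and rests on the same two structural reductions as the paper --- martingale orthogonality turning \eqref{e:clEa-1} into an exact deterministic recursion for $\Cov(\theta_n^{(1)})$, and projection onto the left eigenvector $v$ in part (ii) --- but your finishing machinery differs in both parts. For (i), the paper does not run your discrete product/convolution estimate: it regards the recursion for $n\Cov(\theta_n^{(1)})$ as an Euler scheme, on the timescale $t_n=\sum_{k\le n}\alpha_k$, for the stable matrix ODE $\ddt \clX = (1+e^{-t})[\clX + A\clX + \clX A^\transpose] + \Sigma_\Delta + e^{-t} A\clX A^\transpose$, and extracts the $O(n^{-\delta})$ decay of $n\Cov(\theta_n^{(1)})-\Sigma_\theta$ from exponential stability of this ODE plus standard SA/ODE comparison arguments \cite{bor08a}. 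Your fundamental-product route through the Lyapunov operator $\mathcal{L}$ is more elementary and more quantitative --- it exhibits $\delta$ as essentially $\min(1,2\eta)$, which the ODE argument leaves implicit --- at the cost of two small technicalities that the statement tolerates since only ``some $\delta>0$'' is claimed: nonnormality of $\mathcal{L}$ (Jordan blocks) and the boundary case $2\eta=1$ (a logarithmic factor in the convolution) each shave an arbitrarily small $\epsilon$ off the exponent. For (ii), the paper scales first, applying \Lemma{t:taylor-scale} to form $\tiltheta_n^{\varrho_0,(1)}=n^{\varrho_0}\tiltheta_n^{(1)}$, and then bounds the $\liminf$ and $\limsup$ of $e_n^{\varrho_0}=\Expect[|v^\transpose\tiltheta_n^{\varrho_0,(1)}|^2]$ by summing logarithms of the factors $[1+\epsy_v(n,\varrho_0)/(n+1)]^2+u^2/(n+1)^2$; your variation-of-constants solution of the unscaled scalar recursion is the same computation in different clothing, and since $\sum_k \bigl|\log\bigl(1-2\varrho_0/k+O(k^{-2})\bigr)+2\varrho_0/k\bigr|<\infty$ when $2\varrho_0<1$, it in fact yields the marginally stronger conclusion $n^{2\varrho_0}\Expect[|v^\transpose\tiltheta_n^{(1)}|^2]\to C>0$, where the paper settles for finite, strictly positive $\liminf$ and $\limsup$ --- either version gives the rate $n^{-2\varrho_0}$ in the paper's sense. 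Both proofs use $\varrho_0<\half$ and $\Sigma_\Delta v\neq 0$ in exactly the same places: the former keeps the factors eventually in $(0,1)$ and makes the forcing series $\sum_m m^{2\varrho_0-2}$ summable, and the latter --- via $\Sigma_\Delta\ge 0$, so that $\overline{v}^\transpose\Sigma_\Delta v=\|\Sigma_\Delta^{1/2}v\|^2>0$ precisely when $\Sigma_\Delta v\neq 0$ --- keeps the limiting constant strictly positive, which is the matching lower bound you correctly identified as the delicate step.
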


\subsubsection{The telescoping sequence term} 

\begin{proposition}
\label{t:tele-3}
Under (A1)-(A3),
\begin{romannum}

\item If $\text{Real}(\lambda)<-\half$ for every eigenvalue $\lambda$ of $A$, then,  $\Cov(\theta_n^{(2)}) = O(n^{-1-\delta})$ for some $\delta = \delta(\half I + A,\Sigma_\Delta)>0$.

\item Suppose there is an eigenvalue $\lambda$ of $A$ that satisfies {$-\varrho_0 = \text{Real}(\lambda)> -\half$}. Let  $v \neq 0 $ denote the corresponding left eigenvector, and suppose moreover that $ \Sigma_\Delta v \neq 0$. Then,
\[
\limsup_{n\rightarrow \infty} n^{2\varrho_0}\Expect[|v^\transpose \tiltheta_n^{(2)}|^2] < \infty
\]
%Consequently, $\lim_{n\rightarrow \infty} n^{2\varrho} \Expect[|v^\transpose \tiltheta_n^{(2)}|^2]=0$ for any $\varrho < \varrho_0$.
%\sh{lim = 0 is not absolutely necessary. But we want to use this directly to show Thm 2.4 (ii) $\lim =0$ case.}
\end{romannum}
\qed
\end{proposition}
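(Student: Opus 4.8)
The plan is to analyze the scaled recursion \eqref{e:clE-2} for $\tiltheta_n^{\varrho,(2)} = n^\varrho\tiltheta_n^{(2)}$ by variation of constants, exploiting the one structural feature that distinguishes the telescoping term from the martingale term: the forcing in \eqref{e:clE-2} is $-(n+1)^\varrho\alpha_n[I+A]Z_{n+1}$, which carries an \emph{extra} factor $\alpha_n = 1/n$ relative to the martingale forcing $(n+1)^\varrho\Delta_{n+2}^m$ in \eqref{e:clE-1}. Writing the recursion as $\tiltheta_{n+1}^{\varrho,(2)} = (I + \alpha_{n+1}G_n)\tiltheta_n^{\varrho,(2)} + \alpha_{n+1}W_{n+1}$ with $G_n = \varrho_n I + A(n,\varrho)$ and $W_{n+1} = -(n+1)^\varrho\alpha_n[I+A]Z_{n+1}$, I would first record that $\|Z_n\|_{L_2}$ is bounded uniformly in $n$: this follows from $\haf_i^2\in\LV$ (\Theorem{t:Vuni}(ii)) together with the uniform bound $\Expect[V(\Phi_n)]\le \pi(V)+B_V V(\Phi_0)$ implied by (A1). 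Hence $\|W_{n+1}\|_{L_2} = O(n^{\varrho-1})$ and $\alpha_{k+1}\|W_{k+1}\|_{L_2} = O(k^{\varrho-2})$, which is summable for every $\varrho < 1$. Denoting the fundamental matrices $\Psi_{n,k} = \prod_{j=k}^{n-1}(I + \alpha_{j+1}G_j)$, variation of constants gives $\tiltheta_n^{\varrho,(2)} = \Psi_{n,1}\tiltheta_1^{\varrho,(2)} + \sum_{k=1}^{n-1}\Psi_{n,k+1}\alpha_{k+1}W_{k+1}$, and Minkowski's inequality bounds $\|\tiltheta_n^{\varrho,(2)}\|_{L_2}$ by $\|\Psi_{n,1}\|\,\|\tiltheta_1^{\varrho,(2)}\|_{L_2} + \sum_k\|\Psi_{n,k+1}\|\,\alpha_{k+1}\|W_{k+1}\|_{L_2}$. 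Crucially, because the forcing is summable I never need orthogonality of the increments, so the correlations inherent to the Markovian noise are harmless here.

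For Part~(i), since every eigenvalue of $A$ has $\text{Real}(\lambda) < -\half$, I may fix $\varrho$ with $\half < \varrho < \min_\lambda|\text{Real}(\lambda)|$, so that $\varrho I + A$ — the limit of $G_n$ — is Hurwitz. A standard contraction estimate for such products (of the kind established in Appendix~\ref{s:app-msr}) gives $\|\Psi_{n,k}\| \le C(k/n)^a$ in a suitable norm, for any $a$ below the stability margin of $\varrho I + A$. Inserting this and $\alpha_{k+1}\|W_{k+1}\|_{L_2} = O(k^{\varrho-2})$ into the Minkowski estimate, a routine summation shows $\sup_n\|\tiltheta_n^{\varrho,(2)}\|_{L_2} < \infty$, i.e. $\Sigma_n^{\varrho,(2)}$ is bounded. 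Unscaling, $\Cov(\theta_n^{(2)}) = n^{-2\varrho}\Sigma_n^{\varrho,(2)} = O(n^{-2\varrho}) = O(n^{-1-\delta})$ with $\delta = 2\varrho - 1 > 0$ depending only on the stability margin of $\half I + A$, which proves the claim.

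For Part~(ii), I would take $\varrho = \varrho_0$ and project onto the left eigenvector by setting $\xi_n = v^\transpose\tiltheta_n^{\varrho_0,(2)}$. Because $A(n,\varrho_0) = (1+n^{-1})^{\varrho_0}A$ is a scalar multiple of $A$ and $v^\transpose A = \lambda v^\transpose$, the projection decouples into a scalar complex recursion $\xi_{n+1} = (1 + \alpha_{n+1}\nu_n)\xi_n + \alpha_{n+1}w_{n+1}$, where $\nu_n = \varrho_n + (1+n^{-1})^{\varrho_0}\lambda$ and $w_{n+1} = -(n+1)^{\varrho_0}\alpha_n(1+\lambda)v^\transpose Z_{n+1}$, using $v^\transpose[I+A] = (1+\lambda)v^\transpose$. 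The decisive computation is that $\text{Real}(\nu_n) = \varrho_0 + \epsy(n,\varrho_0) - \varrho_0(1+n^{-1})^{\varrho_0} = O(n^{-1})$, so that $\log|1+\alpha_{n+1}\nu_n|^2 = 2\alpha_{n+1}\text{Real}(\nu_n) + O(\alpha_{n+1}^2) = O(n^{-2})$. Summability of these increments shows the scalar product $\prod_{j=k}^{n-1}(1 + \alpha_{j+1}\nu_j)$ stays bounded above \emph{and} below away from zero, uniformly in $k\le n$. With $\alpha_{k+1}\|w_{k+1}\|_{L_2} = O(k^{\varrho_0-2})$ summable, Minkowski's inequality yields $\sup_n\Expect[|\xi_n|^2] < \infty$, and unscaling gives $\Expect[|v^\transpose\tiltheta_n^{(2)}|^2] = n^{-2\varrho_0}\Expect[|\xi_n|^2] = O(n^{-2\varrho_0})$, which is the assertion.

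The main obstacle is the product estimate in the critical regime of Part~(ii): here the effective eigenvalue $\nu_n$ converges to the purely imaginary number $\text{Imag}(\lambda)\,i$, so there is no genuine contraction and the product is stabilized only by the $O(n^{-2})$-summable corrections to $\text{Real}(\nu_n)$; one must track the $O(n^{-1})$ perturbations precisely enough to secure both the upper and the lower bound on $|\prod_{j}(1+\alpha_{j+1}\nu_j)|$, since an error of sign at order $1/n$ would alter the conclusion. Everything else — the $L_2$ boundedness of $\{Z_n\}$, the Minkowski estimate, and the Hurwitz semigroup bound used in Part~(i) — is routine once this product is controlled.
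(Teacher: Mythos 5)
Your proposal is correct and takes essentially the same route as the paper's proof in Appendix~\ref{s:app-msr-tele}: both arguments exploit the extra factor $\alpha_n$ in the telescoping forcing (so that plain triangle/Minkowski estimates suffice, with no orthogonality needed), control the products $\prod_k (I+\alpha_k G_k)$ via a \Lemma{t:prod-n}-type bound after absorbing the $O(n^{-1})$ perturbation $\clD_n$, and in part~(ii) project onto the left eigenvector to reduce to a scalar complex recursion whose coefficient has modulus $1$ plus summable corrections, exactly as in the paper's bound $|1+\alpha_{n+1}[\epsy_v(n,\varrho_0)+ui]|\le 1+\alpha_{n+1}^{3/2}$. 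The only cosmetic differences are that the paper iterates a recursive inequality in the Lyapunov norm $\|\cdot\|_T$ at the fixed scaling $\varrho=\half$, extracting $\delta$ from the Lyapunov contraction rate, whereas you unroll by variation of constants at a scaling $\varrho\in(\half,1)$ below the spectral margin (equivalent mechanics; just state explicitly that you keep $\varrho<1$ so that $\alpha_{k+1}\|W_{k+1}\|_{L_2}=O(k^{\varrho-2})$ remains summable, as your own parenthetical already indicates, and note that the claimed "lower bound away from zero" on the scalar product in part~(ii) is not actually needed for this upper-bound statement).
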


\subsubsection{Proof of  \Theorem{t:SAlinearized}}

We obtain the convergence rate of $\Cov(\theta_n)$ based on
\[
\Cov(\theta_n) = \sum_{i=1}^3 \Cov(\theta_n^{(i)}) + \sum_{i=1}^3\sum_{j=1, j\neq i}^3 \Expect[\tiltheta_n^{(i)} (\tiltheta_n^{(j)})^\transpose]
\]

For case (i), by \Proposition{t:mart-3} (i) and \Proposition{t:tele-3} (i), there exists 
$\delta = \delta(\half I +A,\Sigma_\Delta)>0 $ such that
\[ 
\begin{aligned}
\Cov(\theta_n^{(1)})  &= n^{-1}\Sigma_\theta + O(n^{-1-\delta}) \\
\Cov(\theta_n^{(2)}) &= O(n^{-1-\delta }) \\
\Cov(\theta_n^{(3)}) & = n^{-2} \Sigma_{Z_{n+1}} 
\end{aligned}
\]
The cross terms between $\tiltheta_n^{(i)}$ and $\tiltheta_n^{(j)}$ for $i\neq j$ are of smaller orders than $O(1/n)$ by the Cauchy-Schwarz inequality. Therefore, for a possibly smaller $\delta>0$,
\[
\Cov(\theta_n) = n^{-1}\Sigma_\theta + O(n^{-1-\delta})
\]
For case (ii),  $\lim_{n\rightarrow 0}n^{2\varrho}\Expect[|v^\transpose \tiltheta_n|^2] =0$  for each $\varrho < \varrho_0$ can be obtained from \Proposition{t:mart-3} (ii) and \Proposition{t:tele-3} (ii) directly by the triangle inequality. For $\varrho > \varrho_0$, the result $\lim_{n\rightarrow 0}n^{2\varrho}\Expect[|v^\transpose\tiltheta_n|^2] =\infty$ is established independently in \Lemma{t:unstable}.
\qed

\subsection{Finer Error Bound}
\label{s:fine-error}
\subsubsection{Finer Decomposition with Second Poisson Equation}

With $\haf$ in \eqref{e:DoubleFish} and  that $\haf_i^2\in \LV$ for each $1\le i \le d$, denote $\hahaf$ by the zero-mean solution to the second Poisson equation
\begin{equation}
\Expect[\hahaf\, ( \Phi_{k+1}) \mid \Phi_k =z] =\hahaf(z) -  \haf(z) 
\label{e:DoubleFish-2}
\end{equation}
We then write, for $n\ge 1$,
\begin{equation}
\label{e:DoubleFish-2}
Z_n = \haDelta_{n+1}^{m} + \haZ_n - \haZ_{n+1}
\end{equation}
where $\haZ_n = \hahaf(\Phi_n)$,  and  $\haDelta_{n+1}^m = \haZ_{n+1} - \Expect[\haZ_{n+1}\mid \clF_n]$ is a martingale difference sequence.

The type of decomposition in Section \ref{s:decomp} can be applied to $\tiltheta_n^{(2)}$ in \eqref{e:Tele} for $n\ge 2$:
\begin{equation}
\tiltheta_n^{(2)}  = \tiltheta_n^{(2,1)}  +   \tiltheta_n^{(2,2)}  + \tiltheta_n^{(2,3)}
\label{e:unscaledDecomp-finer-2}
\end{equation}
The first two sequences evolve as SA recursions:
\begin{subequations}%
	\begin{align}
	  \tiltheta_{n+1}^{(2, 1)} 
	=     \tiltheta_n^{(2, 1)} 
	+  & \alpha_{n+1} \bigl[    A  \tiltheta_n^{(2,1)} 
	- \alpha_n [I+A] \haDelta_{n+2}^m  \bigr] \,, && \tiltheta_1^{(2,1)} = Z_1
	\label{e:clE-finer-1}
	\\
	  \tiltheta_{n+1}^{(2, 2)} 
	=     \tiltheta_n^{(2, 2)} 
	+ & \alpha_{n+1}   \bigl[   A  \tiltheta_n^{(2, 2)}  
	+    \alpha_{n-1}\alpha_n [2I+A][I+A]    \haZ_{n+1}
	\bigr] \,, && \tiltheta_2^{(2, 2)} = -\half [I+A]\haZ_{2}
	\label{e:clE-finer-2}
	\end{align}%
	\label{e:auto2-finer}%
\end{subequations}%
and $\tiltheta_n^{(2,3)} = \alpha_{n-1}\alpha_n[I+A] \haZ_{n+1}$. Therefore, $\tiltheta_n$ for $n\ge 2$ can be decomposed as:
\begin{equation}
\tiltheta_n  = \tiltheta_n^{(1)}  +   \tiltheta_n^{(2,1)} + \tiltheta_n^{(2,2)} +\tiltheta_n^{(2,3)}  + \tiltheta_n^{(3)}
\label{e:cle-finer-decomp}
\end{equation}

\subsubsection{Finer Mean Square Error Bound}

The   error bound \eqref{e:finite-n2} is obtained from \eqref{e:cle-finer-decomp}:

 \begin{theorem}
\label{t:SAlinearized-finer}
Suppose Assumptions {(A1)}-{(A3)} hold, and moreover $\text{Real}(\lambda) <-1$ for every eigenvalue $\lambda$ of $A$. Then, for the linear recursion \eqref{e:SAlinearized},
\[
\Cov(\theta_n) = n^{-1}\Sigma_\theta +  n^{-2}\Sigma_{\theta,2}  + O(n^{-2-\delta}) 
\]
where $\delta=\delta(I+A,\Sigma_\Delta)>0$, $\Sigma_{\theta,2}=\Sigma_{\sharp} + \Sigma_Z -\Expect_{\pi}[\Delta_{n}^m \haZ_{n}^\transpose] -\Expect_{\pi}[\haZ_{n}(\Delta_{n}^m)^\transpose]$, and $\Sigma_{\sharp} $ is the unique solution to the Lyapunov equation:
\begin{equation}
[I+A][\Sigma-\Cov_\pi(\haDelta_n^m, \Delta_n^m)]+ [\Sigma -\Cov_\pi(\Delta_n^m, \haDelta_n^m)][I+A]^\transpose + A\Sigma_\theta A^\transpose  - \Sigma_\Delta = 0
\label{e:SigLya-finer}
\end{equation}
\qed
\end{theorem}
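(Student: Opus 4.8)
The plan is to carry the covariance expansion of \Theorem{t:SAlinearized}(i) one order further, using the refined decomposition \eqref{e:cle-finer-decomp}. Expanding $\Cov(\theta_n)=\sum_i\Cov(\theta_n^{(i)})+\sum_{i\neq j}\Expect[\tiltheta_n^{(i)}(\tiltheta_n^{(j)})^\transpose]$ over the five pieces of \eqref{e:cle-finer-decomp}, the first task is to classify each self- and cross-covariance by its polynomial order and to isolate the contributions of exact order $n^{-2}$. The strengthened hypothesis $\Real(\lambda)<-1$ (equivalently $I+A$ Hurwitz) is precisely what makes this order accessible: it is the Chung-type threshold at which a recursion forced at scale $n^{-3}$ relaxes to a genuine $n^{-2}$ limit, rather than being overwhelmed by homogeneous modes decaying like $n^{2\Real(\lambda)}$. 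Under this hypothesis I expect exactly three contributions to survive at order $n^{-2}$, with all remaining self- and cross-terms absorbed into an $O(n^{-2-\delta})$ error, $\delta=\delta(I+A,\Sigma_\Delta)>0$.

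For the dominant contribution I would group the two martingale-driven pieces, $S_n\eqdef\tiltheta_n^{(1)}+\tiltheta_n^{(2,1)}$, which by \eqref{e:clEa-1} and \eqref{e:clE-finer-1} solve the single linear recursion $S_{n+1}=S_n+\alpha_{n+1}[AS_n+G_{n+2}]$ driven by the martingale difference $G_{n+2}=\Delta_{n+2}^m-\alpha_n[I+A]\haDelta_{n+2}^m$. Since $G_{n+2}$ is uncorrelated with $S_n$, its second moment obeys $\Cov(S_{n+1})=(I+\alpha_{n+1}A)\Cov(S_n)(I+\alpha_{n+1}A)^\transpose+\alpha_{n+1}^2\Expect[G_{n+2}G_{n+2}^\transpose]$, where $\Expect[G_{n+2}G_{n+2}^\transpose]=\Sigma_\Delta-\alpha_n([I+A]\Cov_\pi(\haDelta_n^m,\Delta_n^m)+\Cov_\pi(\Delta_n^m,\haDelta_n^m)[I+A]^\transpose)+O(\alpha_n^2)+O(\rho^n)$, the geometric error coming from \eqref{e:Vuni}. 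Substituting the two-term ansatz $\Cov(S_n)=n^{-1}\Sigma_\theta+n^{-2}\Sigma_{\sharp}+O(n^{-2-\delta})$ and matching orders, the $n^{-2}$ balance reproduces \eqref{e:lyap} for $\Sigma_\theta$, and the $n^{-3}$ balance---after eliminating $A\Sigma_\theta+\Sigma_\theta A^\transpose$ via \eqref{e:lyap}---collapses exactly onto the Lyapunov equation \eqref{e:SigLya-finer} defining $\Sigma_{\sharp}$.

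The two remaining surviving pieces I would treat directly. First, $\Cov(\theta_n^{(3)})=\alpha_n^2\Expect[Z_{n+1}Z_{n+1}^\transpose]=n^{-2}\Sigma_Z+O(n^{-2-\delta})$, using $\Expect[Z_{n+1}Z_{n+1}^\transpose]\to\Sigma_Z$ geometrically. Second, the cross term $\Expect[\tiltheta_n^{(1)}(\tiltheta_n^{(3)})^\transpose]=-\alpha_n\Expect[\tiltheta_n^{(1)}Z_{n+1}^\transpose]$: since $\tiltheta_n^{(1)}$ is a martingale transform of $\{\Delta_k^m\}$ with coefficients $\approx\alpha_n$ over the $O(\log n)$ most recent steps, and $\Expect_\pi[\Delta_0^m Z_j^\transpose]$ decays geometrically in $j$ by \eqref{e:Vuni}, one obtains $\Expect[\tiltheta_n^{(1)}Z_{n+1}^\transpose]=\alpha_n\sum_{j\ge0}\Expect_\pi[\Delta_0^m Z_j^\transpose]+o(\alpha_n)$; the series equals $\Expect_\pi[\Delta_n^m\haZ_n^\transpose]$ because $\haZ_n=\hahaf(\Phi_n)=\sum_{j\ge0}\Expect[Z_{n+j}\mid\Phi_n]$ by \eqref{e:fishSum}. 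This yields $-n^{-2}\Expect_\pi[\Delta_n^m\haZ_n^\transpose]$, and with its transpose supplies the last two terms of $\Sigma_{\theta,2}$. Every other pairing is negligible: $\tiltheta_n^{(2,2)}$ and $\tiltheta_n^{(2,3)}$ are forced at scale $n^{-3}$ or smaller and, since $\Real(\lambda)<-1$, have covariance $O(n^{-2-\delta})$, so their self-covariances and all their cross-covariances with $S_n$ and $\tiltheta_n^{(3)}$ fall into the remainder, as does the $\tiltheta_n^{(2,1)}$--$\tiltheta_n^{(3)}$ part of the cross term with $S_n$. Adding the three survivors gives $\Sigma_{\theta,2}=\Sigma_{\sharp}+\Sigma_Z-\Expect_\pi[\Delta_n^m\haZ_n^\transpose]-\Expect_\pi[\haZ_n(\Delta_n^m)^\transpose]$.

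The main obstacle is the refined deterministic asymptotics of the matrix covariance recursion: extracting a genuine \emph{two-term} expansion with a quantitatively controlled $O(n^{-2-\delta})$ remainder, and proving that the auxiliary processes $\tiltheta_n^{(2,1)}$--$\tiltheta_n^{(2,3)}$ and the nonsurviving cross terms are truly $o(n^{-2})$. This is where $\Real(\lambda)<-1$ is indispensable: the argument at $\Real(\lambda)<-\half$ only resolves the leading $n^{-1}$ term (\Theorem{t:SAlinearized}(i)), whereas the $n^{-2}$ coefficient is masked by homogeneous modes of size $n^{2\Real(\lambda)}$ unless $2\Real(\lambda)<-2$. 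A secondary, bookkeeping-level difficulty is to confirm that the martingale/geometric-ergodicity estimates identify the limiting cross-term coefficients as the \emph{stationary} quantities $\Cov_\pi(\haDelta_n^m,\Delta_n^m)$ and $\Expect_\pi[\Delta_n^m\haZ_n^\transpose]$---with all transient, geometrically small discrepancies safely inside the remainder---and to verify the algebraic collapse of the $n^{-3}$ balance onto \eqref{e:SigLya-finer}.
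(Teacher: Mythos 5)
Your proposal is correct and reaches \eqref{e:SigLya-finer}, but it organizes the dominant-term analysis differently from the paper. The paper splits the order-$n^{-2}$ contribution of the martingale-driven components into two pieces --- $\Cov(\theta_n^{(1)})$, handled in \Proposition{t:fine-comp}~(i) via the Lyapunov equation \eqref{e:SigLya-2} for $\Sigma_{\sharp}^{(1)}$, and the cross-correlation $R_n^{(2,1),(1)}+R_n^{(1),(2,1)}$, handled in \Proposition{t:fine-comp}~(ii) via the separate Lyapunov equation \eqref{e:SigLya-c} for $\Sigma_{\sharp}^{(2)}$ --- each obtained by subtracting the leading term, rescaling, and viewing the resulting matrix recursion as an Euler scheme for a stable linear ODE, and only at the end verifies that $\Sigma_{\sharp}=\Sigma_{\sharp}^{(1)}+\Sigma_{\sharp}^{(2)}$ solves \eqref{e:SigLya-finer}. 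You instead group $S_n=\tiltheta_n^{(1)}+\tiltheta_n^{(2,1)}$ into a single recursion driven by the martingale difference $G_{n+2}=\Delta_{n+2}^m-\alpha_n[I+A]\haDelta_{n+2}^m$ (this is indeed a martingale difference, uncorrelated with the $\clF_{n+1}$-measurable $S_n$), so that one exact covariance recursion captures $\Cov(\theta_n^{(1)})+R_n^{(2,1),(1)}+R_n^{(1),(2,1)}+\Cov(\theta_n^{(2,1)})$ simultaneously, the last term being $O(n^{-3})$ and harmless since higher-order terms do not feed back into the $n^{-3}$ balance. I checked the matching: the $n^{-2}$ balance recovers \eqref{e:lyap}, and after eliminating $A\Sigma_\theta+\Sigma_\theta A^\transpose$ via \eqref{e:lyap} the $n^{-3}$ balance collapses exactly onto \eqref{e:SigLya-finer}. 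This buys a cleaner derivation that never sees the two partial Lyapunov equations. Your treatment of the remaining survivors matches the paper's in substance: $\Cov(\theta_n^{(3)})=n^{-2}\Sigma_Z+O(\rho^n)$ is immediate, and your computation of $R_n^{(1),(3)}$ is a compressed version of the paper's Appendix argument (the explicit formula \eqref{e:linear-solution-mart}, the product bound of \Lemma{t:prod-matrix}, the bivariate-chain estimates of Lemmas~\ref{t:bivariate-chain} and \ref{t:V-geo-secondPoi}, and \Lemma{e:exp-sum}), with the identification $\sum_{j\ge0}\Expect_\pi[\Delta_0^m Z_j^\transpose]=\Expect_\pi[\Delta_n^m\haZ_n^\transpose]$ correctly justified by the second Poisson equation and the Markov property. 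The one incomplete step is the one you flag yourself: promoting the formal two-term ansatz for $\Cov(S_n)$ to a proven expansion with an $O(n^{-2-\delta})$ remainder. This closes exactly as in \Proposition{t:fine-comp}~(i): set $E_n=n\Cov(S_n)-\Sigma_\theta$, observe that the recursion for $nE_n$ is an Euler approximation of the stable linear ODE \eqref{e:Sigma-ODE-a} with the forcing augmented by $-[I+A]\Cov_\pi(\haDelta_n^m,\Delta_n^m)-\Cov_\pi(\Delta_n^m,\haDelta_n^m)[I+A]^\transpose$, and use exponential stability of $I+A$ (your hypothesis $\text{Real}(\lambda)<-1$) to extract the rate $n^{-\delta}$; the paper's argument transfers verbatim to your combined recursion, so the gap is one of machinery already available rather than of ideas.
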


\subsubsection{Proof of \Theorem{t:SAlinearized-finer}}

Denote the correlation between $\tiltheta_n^{(a)}$ and $\tiltheta_n^{(b)}$ as $R_n^{(a),(b)} = \Expect[\tiltheta_n^{(a)}(\tiltheta_n^{(b)})^\transpose]$, where $\tiltheta_n^{(a)}, \tiltheta_n^{(b)}$ are different terms in \eqref{e:cle-finer-decomp}. The key results that help establish \Theorem{t:SAlinearized-finer} are summarized in the following proposition. The proof is in Appendix \ref{s:app-finer}.
\begin{proposition}
\label{t:fine-comp}
Under Assumptions (A1)-(A3), if $\text{Real}(\lambda) < -1$ for every eigenvalue of $A$, then there is $\delta>0$ such that 
\begin{romannum}
\item
$
\Cov(\theta_n^{(1)}) = n^{-1}\Sigma_\theta + n^{-2}\Sigma_{\sharp}^{(1)} + O(n^{-2-\delta})
$,
where $\delta = \delta(I+A, \Sigma_\Delta)>0$, $\Sigma_\theta \geq 0 $ is the unique solution to the Lyapunov equation \eqref{e:lyap}, and $\Sigma_{\sharp}^{(1)}\geq 0$ solves the Lyapunov equation,
\begin{equation}
[I+A]\Sigma + \Sigma[I+A]^\transpose + A\Sigma_\theta A^\transpose  - \Sigma_\Delta = 0
\label{e:SigLya-2}
\end{equation}
\item  
$
R_n^{(2,1),(1)}  + R_n^{(1), (2,1)} = n^{-2}\Sigma_{\sharp}^{(2)} + O(n^{-2-\delta})
$,
where $\Sigma_{\sharp}^{(2)}$ solves the Lyapunov equation:
\begin{equation}
[I+A]\Sigma + \Sigma[ I+A]^\transpose  - [I+A]\Cov_\pi(\haDelta_n^m, \Delta_n^m) - \Cov_\pi(\Delta_n^m, \haDelta_n^m)[I+A]^\transpose= 0
\label{e:SigLya-c}
\end{equation}
\item 
$ 
R_n^{(1),(3)} = -n^{-2}\Expect_{\pi}[\Delta_{n}^m \haZ_{n}^\transpose] + O(n^{-3})$.
\end{romannum}
\qed
\end{proposition}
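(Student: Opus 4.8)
The plan is to treat all three estimates as second-order refinements of the first-order covariance analysis already established in \Proposition{t:mart-3} and \Proposition{t:tele-3}, now exploiting the stronger hypothesis that $\text{Real}(\lambda)<-1$, i.e. that $I+A$ (hence also $\half I+A$) is Hurwitz. Write $-\varrho_0=\max_i\text{Real}(\lambda_i)<-1$. The unifying device is to derive an \emph{exact} recursion for each second-moment or cross-moment matrix, use the martingale-difference property to annihilate every cross term in which a noise increment meets an $\clF$-measurable iterate, and then match a polynomial ansatz $n^{-1}S_1+n^{-2}S_2+\cdots$ against that recursion. The $S_i$ are pinned down by Lyapunov equations in $I+A$, uniquely solvable because $I+A$ is Hurwitz; the leading coefficients are already known from \eqref{e:lyap}.

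For (i), recursion \eqref{e:clEa-1} and the martingale-difference property of $\{\Delta_n^m\}$ give the exact update
\[
\Cov(\theta_{n+1}^{(1)}) = (I+\alpha_{n+1}A)\Cov(\theta_n^{(1)})(I+\alpha_{n+1}A)^\transpose + \alpha_{n+1}^2\,\Expect[\Delta_{n+2}^m(\Delta_{n+2}^m)^\transpose],
\]
where, by (A1) and \Theorem{t:Vuni}(ii), the last factor equals $\Sigma_\Delta+O(\rho^n)$; the transient decays geometrically and contributes nothing at polynomial order. Substituting $\Cov(\theta_n^{(1)})=n^{-1}\Sigma_\theta+n^{-2}\Sigma_\sharp^{(1)}$ and expanding $(n+1)^{-1}$ and $(n+1)^{-2}$, the $n^{-2}$ terms reproduce the leading Lyapunov equation \eqref{e:lyap} (so $\Sigma_\theta$ cancels) and the $n^{-3}$ terms yield precisely \eqref{e:SigLya-2} for $\Sigma_\sharp^{(1)}$. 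Positivity $\Sigma_\sharp^{(1)}\ge0$ then follows, since \eqref{e:lyap} implies the forcing in \eqref{e:SigLya-2} equals $\Sigma_\Delta-A\Sigma_\theta A^\transpose=(I+A)\Sigma_\theta(I+A)^\transpose\ge0$.

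Part (ii) is the analogous computation for the cross moment $C_n=R_n^{(2,1),(1)}$ driven by \eqref{e:clEa-1} and \eqref{e:clE-finer-1}. Since $\tiltheta_n^{(1)}$ and $\tiltheta_n^{(2,1)}$ are $\clF_{n+1}$-measurable while $\Delta_{n+2}^m,\haDelta_{n+2}^m$ are martingale differences, every cross term vanishes except the contemporaneous one, leaving the forcing $-\alpha_{n+1}^2\alpha_n[I+A]\,\Expect[\haDelta_{n+2}^m(\Delta_{n+2}^m)^\transpose]$ of order $n^{-3}$, with moment converging to $\Cov_\pi(\haDelta_n^m,\Delta_n^m)$. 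Matching $C_n=n^{-2}D_2+\cdots$ at order $n^{-3}$ gives
\[
[I+A]D_2 + D_2[I+A]^\transpose = [I+A]\Cov_\pi(\haDelta_n^m,\Delta_n^m);
\]
adding this to its transpose and setting $\Sigma_\sharp^{(2)}=D_2+D_2^\transpose$ (using $\Cov_\pi(\haDelta_n^m,\Delta_n^m)^\transpose=\Cov_\pi(\Delta_n^m,\haDelta_n^m)$) produces exactly \eqref{e:SigLya-c}. For (iii), since $\tiltheta_n^{(3)}=-\alpha_nZ_{n+1}$ it suffices to expand $\Expect[\tiltheta_n^{(1)}Z_{n+1}^\transpose]$. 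Writing $\tiltheta_n^{(1)}$ from \eqref{e:clEa-1} as a weighted sum of past increments $\Delta_{k+1}^m$ (the initial-condition term is negligible as $\Expect[Z_{n+1}]=O(\rho^n)$) and conditioning, $\Expect[\Delta_{k+1}^m Z_{n+1}^\transpose]=\Expect[\Delta_{k+1}^m(\Expect[\haf(\Phi_{n+1})\mid\Phi_{k+1}])^\transpose]$ decays geometrically in the lag $n-k$ by \eqref{e:Vuni}. Hence the sum localizes to $k$ within $O(\log n)$ of $n$, where the transition products are $I+O(\log n/n)$ and the gains are $n^{-1}(1+O(\log n/n))$; summing the lagged correlations and recognizing $\sum_{\ell\ge0}\Expect[\haf(\Phi_{1+\ell})\mid\Phi_1]=\hahaf(\Phi_1)=\haZ_1$ through \eqref{e:fishSum} applied to $\haf$ (the second Poisson equation) gives $\Expect[\tiltheta_n^{(1)}Z_{n+1}^\transpose]=n^{-1}\Expect_\pi[\Delta_n^m\haZ_n^\transpose]+O(n^{-2})$, so $R_n^{(1),(3)}=-n^{-2}\Expect_\pi[\Delta_n^m\haZ_n^\transpose]+O(n^{-3})$.

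The main obstacle is uniform control of the remainders. For (i)--(ii) this requires a general estimate for the inhomogeneous linear matrix recursion $\Sigma_{n+1}=(I+\alpha_{n+1}A)\Sigma_n(I+\alpha_{n+1}A)^\transpose+(\text{forcing})$: after subtracting the matched ansatz the residual forcing is $O(n^{-4})$ in its polynomial part, and since the vectorized homogeneous operator (the Kronecker sum of $A$ with itself) has spectral abscissa $-2\varrho_0<-2$, a Chung-type bound yields residuals $O(n^{-2-\delta})$ with $\delta=\min(2\varrho_0-2,1)$ (a logarithmic factor in the borderline case $\varrho_0=\tfrac{3}{2}$). This is precisely the lemma already invoked for \Proposition{t:mart-3}, now applied one order deeper. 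For (iii) the delicate point is justifying the interchange of the lag-summation with the stationary limit while separating the geometric ergodic transients from the $n^{-1}$ localization error; the $V$-uniform ergodicity bound \eqref{e:Vuni}, together with $\haf_i^2\in\LV$ and (via \Theorem{t:Vuni}(ii) applied to $\haf$) $\hahaf_i^2\in\LV$, supplies the dominated-convergence control needed to close this step.
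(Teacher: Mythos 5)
Your architecture matches the paper's at every structural point: exact moment recursions from \eqref{e:clEa-1} and \eqref{e:clE-finer-1} with martingale orthogonality killing all cross terms, identification of \eqref{e:SigLya-2} and \eqref{e:SigLya-c} by matching at order $n^{-3}$ (your matching computations check out, including the symmetrization $\Sigma_\sharp^{(2)}=D_2+D_2^\transpose$ with $\Cov_\pi(\haDelta_n^m,\Delta_n^m)^\transpose=\Cov_\pi(\Delta_n^m,\haDelta_n^m)$), and for (iii) the reduction to $\Expect[\tiltheta_n^{(1)}Z_{n+1}^\transpose]$ with the resummation $\sum_{\ell\ge 0}P^\ell\haf=\hahaf$. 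Where you genuinely diverge is in technique. For the remainders in (i)--(ii), the paper treats the recursions for $nE_n^{(1)}$ and $nC_n$ as Euler schemes for the exponentially stable matrix ODEs \eqref{e:Sigma-ODE-a} and \eqref{e:Sigma-ODE-cov} and quotes standard ODE-comparison arguments (after first splitting off the geometrically forced component $E_n^{(2)}$ in \eqref{e:mart-sigma-decomp}); you instead vectorize and apply the discrete product bound of \Lemma{t:prod-n} to the Kronecker-sum operator. Your route is more self-contained, since the paper's ODE step is only sketched, and it makes the exponent $\delta=\min(2\varrho_0-2,1)$ explicit, which the paper does not. For (iii), the paper applies the second decomposition $Z_{n+2}=\haDelta_{n+3}^m+\haZ_{n+2}-\haZ_{n+3}$, annihilates the martingale part via \eqref{e:cov-theta-mart}, and extracts the dominant term $-\tfrac{1}{n+1}\Delta_{n+3}^m\haZ_{n+3}^\transpose$ through the summation-by-parts identity \eqref{e:cov-mart-Z} together with \Lemma{t:prod-matrix} and \Lemma{e:exp-sum}; your direct summation of lagged correlations reaches the same constant $\Expect_\pi[\Delta_n^m\haZ_n^\transpose]$ and is arguably more transparent.

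Two points need repair. First, a sign slip in your positivity argument for (i): from \eqref{e:lyap} one has $(I+A)\Sigma_\theta(I+A)^\transpose = A\Sigma_\theta A^\transpose-\Sigma_\Delta$, not its negative. Since the constant term in \eqref{e:SigLya-2} is exactly $A\Sigma_\theta A^\transpose-\Sigma_\Delta$, the conclusion $\Sigma_\sharp^{(1)}\ge 0$ survives, but the identity as you wrote it is false. Second, in (iii) your appeal to \eqref{e:Vuni} for ``dominated-convergence control'' is not sufficient as stated: $\Delta_{k+1}^m$ is a function of the pair $(\Phi_k,\Phi_{k+1})$, so the conditional correlation $\Expect[\Delta_{k+1}^m\haZ_{n+1}^\transpose\mid\clF_{k+1}]$ lives on the bivariate chain, and controlling the deviation of its expectation from $\Phi_0=z$ to the stationary value at the combined rate $\rho^n$ (uniformly in $k$, so that the nonstationary transients sum to $O(n\rho^n)$ rather than merely $O(\rho^{n-k})$ term by term) requires the $V^*$-norm estimate and the drift-condition transfer carried out in the paper's \Lemma{t:bivariate-chain}, built on $\sqrt V$-uniform ergodicity via \Theorem{t:Vuni}~(ii). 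You name the right ingredients ($\haf_i^2\in\LV$, $\hahaf_i^2\in\LV$, two-step conditioning), so this is a fillable gap rather than a wrong turn, but it is the one technical lemma your sketch leaves unproven.
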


\begin{proof}[Proof of \Theorem{t:SAlinearized-finer}]
With the decomposition in \eqref{e:cle-finer-decomp}, we have
\[
\begin{aligned}
\Cov(\theta_n) = \Cov(\theta_n^{(1)}) &+ \sum_{j=1}^3 \Cov(\theta_n^{(2,j)} ) +  \Cov(\theta_n^{(3)})   + R_n^{(1),(3)} + R_n^{(3),(1)}  \\
&+\sum_{i\in\{1,3\}}\sum_{j=1}^3 [R_n^{(2,j),(i)} +R_n^{(i),(2,j)}  ] 
 + \sum_{j=1}^3\sum_{k=1, k\neq j}^3 [R_n^{(2,j),(2,k)} + R_n^{(2,k),(2,j)}]
\end{aligned}
\]
 $\Cov(\theta_n^{(2,1)} )= O(n^{-3})$, $\Cov(\theta_n^{(2,2)}) = O(n^{-5})$ and $\Cov(\theta_n^{(2,3)} ) = O(n^{-4})$ by \Theorem{t:SAlinearized} (i).   
By the Cauchy-Schwarz inequality, the correlation terms involving $\tiltheta_n^{(2,2)}$ and $\tiltheta_n^{(2,3)}$ are $O(n^{-2.5})$, and $ R_n^{(2,1),(3)} = O(n^{-2.5})$ is also $O(n^{-2.5})$. \Prop{t:fine-comp} (ii) shows that $R_n^{(2,1),(3)} = O(n^{-3})$. Hence the covariance can be approximated as follows:
\[
\Cov(\theta_n) = \Cov(\theta_n^{(1)})  +  \Cov(\theta_n^{(3)}) + R_n^{(1),(3)} + R_n^{(3),(1)} +  R_n^{(2,1),(1)}+ R_n^{(1),(2,1)} + O(n^{-2.5})
\]
By \Proposition{t:fine-comp}, there exist $\delta(I+A,\Sigma_{\Delta})>0$ and $\delta(I+A)>0$ such that
\[
\begin{aligned}
\Cov(\theta_n^{(1)})                                                             & =  n^{-1}\Sigma_\theta +               n^{-2}\Sigma_{\sharp}^{(1)} + O(n^{-2-\delta}) \\
\Cov(\theta_n^{(3)})                                                            & = n^{-2}\Sigma_Z + O(\rho^n)  \\
R_n^{(1),(3)}   														   & =-n^{-2}\Expect_{\pi}[\Delta_{n}^m \haZ_{n}^\transpose] + O(n^{-3}) \\
R_n^{(2,1),(1)}+ R_n^{(1),(2,1)} 							&=n^{-2}\Sigma_{\sharp}^{(2)} + O(n^{-2-\delta})
\end{aligned}
\]
Putting those results together gives 
\[
\Cov(\theta_n) =  n^{-1}\Sigma_\theta +  n^{-2}\bigl(\Sigma_{\sharp}^{(1)}  + \Sigma_{\sharp}^{(2)} +\Sigma_Z -\Expect_{\pi}[\Delta_{n}^m \haZ_{n}^\transpose] -\Expect_{\pi}[\haZ_{n}(\Delta_{n}^m)^\transpose] \bigr) + O(n^{-2-\delta}) 
\]
for some $\delta>0$,   where  $\Sigma_{\sharp}\eqdef \Sigma_{\sharp}^{(1)} + \Sigma_{\sharp}^{(2)}$ solves the Lyapunov equation~\eqref{e:SigLya-finer}.
\end{proof}

\section{Conclusions}

Performance bounds for recursive algorithms are challenging outside of the special cases surveyed in the introduction.   The general framework developed in this paper provides tight finite time performance for linear stochastic recursions under mild conditions on the Markovian noise,  and we are confident that the techniques will extend to obtain similar bounds for nonlinear stochastic approximation provided that the linearization \eqref{e:linSA} is meaningful.

The bound \eqref{e:finite-n1} implies that, for some constant $b_\theta$ and all $n$,
\[
\Expect[\|\tiltheta_n\|^2]  \le   n^{-1} \trace(\Sigma_\theta)  + n^{-1-\delta} b_\theta  \, .
\]
It may be argued that we have not obtained a finite-$n$ bound, because a bound on the constant $b_\theta$   is lacking.    Our response is that the precision of the dominant term is most important.   We have tested the bound in numerous experiments in which the empirical mean-square error   is obtained from multiple independent trials,  and the resulting histogram is compared to what is predicted by the Central Limit Theorem with covariance $\Sigma_\theta$.   It is  found that the Central Limit Theorem is highly predictive of finite-$n$ performance in most cases  \cite{devmey17a,dev19,devbusmey20}.  While it  is hoped that further research will provide bounds on $b_\theta$,   it seems likely that any bound   will   involve high-order statistics of the Markov chain;
evidence of this is the complex coefficient of $n^{-2}$ in  \eqref{e:finite-n2} for the special case   $\delta=1$.

Current research concerns these topics, as well as algorithm design for reinforcement learning in various settings.

\bibliographystyle{abbrv}
\bibliography{strings,markov,q,NIPS19extras,SA_finite_N_extras}

\def\cprime{$'$}\def\cprime{$'$}\def\cprime{$'$}
\begin{thebibliography}{10}

\bibitem{benmetpri90}
A.~Benveniste, M.~M\'etivier, and P.~Priouret.
\newblock {\em Adaptive algorithms and stochastic approximations}, volume~22 of
  {\em Applications of Mathematics (New York)}.
\newblock Springer-Verlag, Berlin, 1990.
\newblock Translated from the French by Stephen S. Wilson.

\bibitem{benmetpri12}
A.~Benveniste, M.~M{\'e}tivier, and P.~Priouret.
\newblock {\em Adaptive algorithms and stochastic approximations}.
\newblock Springer, 2012.

\bibitem{bharussin2018finite}
J.~Bhandari, D.~Russo, and R.~Singal.
\newblock A finite time analysis of temporal difference learning with linear
  function approximation.
\newblock {\em arXiv preprint arXiv:1806.02450}, 2018.

\bibitem{blu54}
J.~R. Blum.
\newblock Multidimensional stochastic approximation methods.
\newblock {\em The Annals of Mathematical Statistics}, pages 737--744, 1954.

\bibitem{bor08a}
V.~S. Borkar.
\newblock {\em Stochastic Approximation: A Dynamical Systems Viewpoint}.
\newblock {Hindustan Book Agency and Cambridge University Press (jointly)},
  {Delhi, India and Cambridge, UK}, 2008.

\bibitem{bormey00a}
V.~S. Borkar and S.~P. Meyn.
\newblock The {ODE} method for convergence of stochastic approximation and
  reinforcement learning.
\newblock {\em SIAM J. Control Optim.}, 38(2):447--469, 2000.
\newblock (see also {\it IEEE CDC}, 1998).

\bibitem{chedevbusmey19}
S.~Chen, A.~M. Devraj, A.~Bu\v{s}i\'{c}, and S.~Meyn.
\newblock {Zap Q Learning} with nonlinear function approximation.
\newblock Submitted for publication and {arXiv e-prints}, 2019.

\bibitem{che19}
Z.~Chen, S.~Zhang, T.~Doan, S.~Maguluri, and J.~Clarke.
\newblock Performance of q-learning with linear function approximation:
  Stability and finite-time analysis.
\newblock {\em arXiv preprint arXiv:1905.11425}, 2019.

\bibitem{chu54o}
K.~L. Chung et~al.
\newblock On a stochastic approximation method.
\newblock {\em The Annals of Mathematical Statistics}, 25(3):463--483, 1954.

\bibitem{dalszothoman2017finite}
G.~Dalal, B.~Szorenyi, G.~Thoppe, and S.~Mannor.
\newblock Finite sample analysis of two-timescale stochastic approximation with
  applications to reinforcement learning.
\newblock {\em arXiv preprint arXiv:1703.05376}, 2017.

\bibitem{dev19}
A.~M. Devraj.
\newblock {\em Reinforcement Learning Design with Optimal Learning Rate}.
\newblock PhD thesis, University of Florida, 2019.

\bibitem{devbusmey19}
A.~M. Devraj, A.~Bu\v{s}i\'{c}, and S.~Meyn.
\newblock {Zap Q-Learning} -- a user's guide.
\newblock In {\em Proc. of the Fifth Indian Control Conference}, January 9-11
  2019.

\bibitem{devbusmey20}
A.~M. Devraj, A.~Bu\v{s}i\'{c}, and S.~Meyn.
\newblock Fundamental design principles for reinforcement learning algorithms.
\newblock In {\em Handbook on Reinforcement Learning and Control}. Springer,
  2020.

\bibitem{devmey17a}
A.~M. Devraj and S.~P. Meyn.
\newblock Fastest convergence for {Q-learning}.
\newblock {\em ArXiv e-prints}, July 2017.

\bibitem{devmey17b}
A.~M. Devraj and S.~P. Meyn.
\newblock Zap {Q-learning}.
\newblock In {\em Proceedings of the 31st International Conference on Neural
  Information Processing Systems}, 2017.

\bibitem{dufmey14}
K.~Duffy and S.~Meyn.
\newblock Large deviation asymptotics for busy periods.
\newblock {\em Stochastic Systems}, 4(1):300--319, 2014.

\bibitem{dufmey10}
K.~R. Duffy and S.~P. Meyn.
\newblock Most likely paths to error when estimating the mean of a reflected
  random walk.
\newblock {\em Performance Evaluation}, 67(12):1290--1303, 2010.

\bibitem{ger99}
L.~{Gerencser}.
\newblock Convergence rate of moments in stochastic approximation with
  simultaneous perturbation gradient approximation and resetting.
\newblock {\em IEEE Transactions on Automatic Control}, 44(5):894--905, May
  1999.

\bibitem{glyorm02}
P.~W. Glynn and D.~Ormoneit.
\newblock Hoeffding's inequality for uniformly ergodic {Markov} chains.
\newblock {\em Statistics and Probability Letters}, 56:143--146, 2002.

\bibitem{golvan1996matrix}
G.~Golub and C.~Van~Loan.
\newblock Matrix computations. 3rd. edn. ed, 1996.

\bibitem{hu19}
B.~Hu and U.~A. Syed.
\newblock Characterizing the exact behaviors of temporal difference learning
  algorithms using markov jump linear system theory.
\newblock {\em arXiv preprint arXiv:1906.06781}, 2019.

\bibitem{karmiamouwai19}
B.~Karimi, B.~Miasojedow, E.~Moulines, and H.-T. Wai.
\newblock Non-asymptotic analysis of biased stochastic approximation scheme.
\newblock In {\em Conference on Learning Theory}, pages 1944--1974, 2019.

\bibitem{kiewol52}
J.~Kiefer and J.~Wolfowitz.
\newblock Stochastic estimation of the maximum of a regression function.
\newblock {\em Ann. Math. Statist.}, 23(3):462--466, 09 1952.

\bibitem{kusyin03}
H.~Kushner and G.~G. Yin.
\newblock {\em Stochastic approximation and recursive algorithms and
  applications}, volume~35.
\newblock Springer Science \& Business Media, 2003.

\bibitem{kusyin97}
H.~J. Kushner and G.~G. Yin.
\newblock {\em Stochastic approximation algorithms and applications}, volume~35
  of {\em Applications of Mathematics (New York)}.
\newblock Springer-Verlag, New York, 1997.

\bibitem{narsze2017finite}
C.~Lakshminarayanan and C.~Szepesvari.
\newblock Linear stochastic approximation: How far does constant step-size and
  iterate averaging go?
\newblock In {\em International Conference on Artificial Intelligence and
  Statistics}, pages 1347--1355, 2018.

\bibitem{metpri84}
M.~{Metivier} and P.~{Priouret}.
\newblock Applications of a {Kushner and Clark} lemma to general classes of
  stochastic algorithms.
\newblock {\em IEEE Transactions on Information Theory}, 30(2):140--151, March
  1984.

\bibitem{CTCN}
S.~P. Meyn.
\newblock {\em Control Techniques for Complex Networks}.
\newblock Cambridge University Press, 2007.
\newblock Pre-publication edition available online.

\bibitem{MT}
S.~P. Meyn and R.~L. Tweedie.
\newblock {\em Markov chains and stochastic stability}.
\newblock Cambridge University Press, Cambridge, second edition, 2009.
\newblock Published in the Cambridge Mathematical Library. 1993 edition online.

\bibitem{pol90}
B.~T. Polyak.
\newblock A new method of stochastic approximation type.
\newblock {\em Avtomatika i telemekhanika (in Russian). translated in Automat.
  Remote Control, 51 (1991)}, pages 98--107, 1990.

\bibitem{poljud92}
B.~T. Polyak and A.~B. Juditsky.
\newblock Acceleration of stochastic approximation by averaging.
\newblock {\em SIAM J. Control Optim.}, 30(4):838--855, 1992.

\bibitem{robmon51a}
H.~Robbins and S.~Monro.
\newblock A stochastic approximation method.
\newblock {\em Annals of Mathematical Statistics}, 22:400--407, 1951.

\bibitem{rup85}
D.~Ruppert.
\newblock A {Newton-Raphson} version of the multivariate {Robbins-Monro}
  procedure.
\newblock {\em The Annals of Statistics}, 13(1):236--245, 1985.

\bibitem{rup88}
D.~Ruppert.
\newblock Efficient estimators from a slowly convergent {Robbins-Monro}
  processes.
\newblock Technical Report Tech.~Rept.~No.~781, Cornell University, School of
  Operations Research and Industrial Engineering, Ithaca, NY, 1988.

\bibitem{sriyin19}
R.~Srikant and L.~Ying.
\newblock Finite-time error bounds for linear stochastic approximation and {TD}
  learning.
\newblock {\em CoRR}, abs/1902.00923, 2019.

\bibitem{sut88}
R.~S. Sutton.
\newblock Learning to predict by the methods of temporal differences.
\newblock {\em Mach. Learn.}, 3(1):9--44, 1988.

\bibitem{tad06}
V.~B. Tadi{\'c}.
\newblock Asymptotic analysis of temporal-difference learning algorithms with
  constant step-sizes.
\newblock {\em Machine learning}, 63(2):107--133, 2006.

\bibitem{tsiroy97a}
J.~N. Tsitsiklis and B.~Van~Roy.
\newblock An analysis of temporal-difference learning with function
  approximation.
\newblock {\em IEEE Trans. Automat. Control}, 42(5):674--690, 1997.

\bibitem{ven67}
J.~Venter et~al.
\newblock An extension of the robbins-monro procedure.
\newblock {\em The Annals of Mathematical Statistics}, 38(1):181--190, 1967.

\bibitem{wat89}
C.~J. C.~H. Watkins.
\newblock {\em Learning from Delayed Rewards}.
\newblock PhD thesis, {King's College, Cambridge}, Cambridge, UK, 1989.

\end{thebibliography}

\clearpage
\appendix

\section{Appendices}
\label{s:app-msr}

\subsection{Proofs for decomposition and scaling}

\begin{proof}[Proof of \Lemma{t:Tele}]
	Recall the summation by parts formula:  for scalar sequences $\{a_k,b_k\}$,
\begin{equation}
	\sum_{k=0}^N  a_{k+1} [b_{k+1}-b_k] =    
	a_{k+1} b_{k+1} - a_1b_0   
	-
	\sum_{k=1}^N  [a_{k+1} - a_k] b_k 
	\label{e:SbP}
\end{equation}

	This is applied to \eqref{e:clEa-2}, beginning with  
\[
	\tiltheta_{N+1}^{\clT}    
	=  \sum_{n=0}^N  \alpha_{n+1}  A  \tiltheta_n^{\clT}     +   \sum_{n=0}^N       \alpha_{n+1} [ Z_{n+1} - Z_{n+2} ]  
\]
	Hence with $a_k =  \alpha_k $  and $b_k =  Z_{k+1} $,  the identity \eqref{e:SbP}  implies  
\[
	\begin{aligned} 
	\sum_{n=0}^N       \alpha_{n+1} [ Z_{n+1} - Z_{n+2} ]  
	&=   
	Z_1 - \alpha_{N+1} Z_{N+2}
	+    
	\sum_{n=1}^N     [  \alpha_{n+1}  - \alpha_n ]  Z_{n+1}  
	\\  
	&
	=
	Z_1 
	- \alpha_{N+1} Z_{N+2}
	-   
	\sum_{n=1}^N       \alpha_{n+1} \alpha_n   Z_{n+1}    
	\end{aligned} 
\]
	By substitution, and using $ \tiltheta_0^{\clT}    =0$,
\[
	\tiltheta_{N+1}^{\clT}    
	=  Z_1 
	- \alpha_{N+1} Z_{N+2}
	+ \sum_{n=1}^N  \alpha_{n+1}   \bigl[   A  \tiltheta_n^{\clT}     - \alpha_n   Z_{n+1}   \bigr]
\]
	
	With $
	\Xi_n \eqdef  \tiltheta_n^{\clT}    + \alpha_n Z_{n+1}$ for $n\ge 1$ we finally obtain  for $N\ge 1$,
\[
	\Xi_{N+1} 
	%\tiltheta_{N+1}^{(3)}    + \alpha_{N+1} Z_{N+2}
	=       Z_1 + \sum_{n=1}^N  \alpha_{n+1}   \bigl[   A \Xi_n   -  \alpha_n [I+A]    Z_{n+1}  \bigr]   
	% A  [\Xi_n - \alpha_n Z_{n+1} ]    -    \alpha_n   Z_{n+1}     
\]
	which is equivalent to \eqref{e:Tele}.
\end{proof}

\begin{proof}[Proof of \Lemma{t:taylor-scale}]
	Consider the Taylor series expansion:
\[
	\begin{aligned} 
	\frac{(n+1)^\varrho}{n^\varrho} = (1+n^{-1})^\varrho 
	&= 1 + \varrho n^{-1} - \half \varrho(1-\varrho) n^{-2}  +O(n^{-3})
	\\
	&  = 1 +  \varrho (n+1)^{-1}  + \varrho  n^{-1}  (n+1)^{-1}   - \half \varrho(1-\varrho) n^{-2}  +O(n^{-3})
	\end{aligned} 
\]
	where the second equation uses $n^{-1} - (n+1)^{-1} =   n^{-1}  (n+1)^{-1} $.  With $\alpha_n=1/n$, the following bound follows:
\[
	(n+1)^\varrho  =  n^\varrho  \bigl[ 1 + \alpha_{n+1}(\varrho + \epsy(n,\varrho)  ) \bigr]
\]
	where $\epsy(n,\varrho) = O( n^{  -1})$,   and $ \epsy(n,\varrho) >0$ for all $n$. 
	
	Multiplying both sides of \eqref{e:SAlinearized} by $(n+1)^\varrho$, we obtain 
\[
	\tiltheta^{\varrho}_{n+1}  
	=      \tiltheta^{\varrho}_n  
	+ \alpha_{n+1} \bigl[  \varrho_n \tiltheta^{\varrho}_n +   A(n,\varrho)   \tiltheta^{\varrho}_n  +  (n+1)^\varrho \Delta_{n+1}  \bigr]   
\]
	where  $\varrho_n = \varrho +\epsy(n,\varrho) $  and $A(n, \varrho) =  (1+n^{-1})^\varrho A$.
\end{proof}

\begin{lemma}
\label{t:prod-n}
Let $\varrho_0 >0, L\geq 0$ be fixed real numbers. Then  the following holds for each $n\geq 1$ and   $1\leq n_0< n$:
\[
\prod_{k=n_0}^{n} [1-\varrho_0\alpha_k + L^2 \alpha_k^2]\leq K_{\ref{t:prod-n}}  \frac{n_0^{\varrho_0}}{(n+1)^{\varrho_0}}
\]
where $K_{\ref{t:prod-n}} = \exp( \varrho_0+ L^2\sum_{k=1}^\infty \alpha_k^2)$.
\end{lemma}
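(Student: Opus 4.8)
The plan is to pass to logarithms, which turns the product into a sum that splits cleanly into a harmonic part (producing the polynomial factor $n_0^{\varrho_0}/(n+1)^{\varrho_0}$) and a square-summable part (producing the $\exp$ constant). Writing $c_k\eqdef 1-\varrho_0\alpha_k+L^2\alpha_k^2$, I would first dispose of the degenerate cases: if some $c_k=0$ the product vanishes and the bound holds since its right-hand side is strictly positive, and if the partial product is negative it again holds trivially. So the substantive case is when the partial product is strictly positive, and I would aim to reduce to the situation in which every factor $c_k$, $n_0\le k\le n$, is itself strictly positive.

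On that event I would apply the elementary inequality $\log(1+x)\le x$ to each factor, giving
\[
\log\prod_{k=n_0}^n c_k \;\le\; \sum_{k=n_0}^n\bigl(-\varrho_0\alpha_k + L^2\alpha_k^2\bigr) \;=\; -\varrho_0\sum_{k=n_0}^n\alpha_k + L^2\sum_{k=n_0}^n\alpha_k^2 .
\]
The second sum is bounded above by $L^2\sum_{k=1}^\infty\alpha_k^2<\infty$. For the first, since $\alpha_k=1/k$ and $1/x$ is decreasing, the integral comparison $\sum_{k=n_0}^n \tfrac1k \ge \int_{n_0}^{n+1}\tfrac{dx}{x} = \log\tfrac{n+1}{n_0}$ yields $-\varrho_0\sum_{k=n_0}^n\alpha_k \le \varrho_0\log\tfrac{n_0}{n+1}$. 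Exponentiating gives
\[
\prod_{k=n_0}^n c_k \;\le\; \Bigl(\tfrac{n_0}{n+1}\Bigr)^{\varrho_0}\exp\Bigl(L^2\sum_{k=1}^\infty\alpha_k^2\Bigr),
\]
which is exactly the claimed bound, in fact with room to spare inside $K_{\ref{t:prod-n}}$.

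The one step requiring genuine care — and the part I expect to be the real obstacle — is the positivity of the factors, since the logarithm step is legitimate only when $c_k>0$. Here I would use that $c_k>0$ is equivalent to $q(k)\eqdef k^2-\varrho_0 k+L^2>0$, and that $q$ is increasing on $[\varrho_0/2,\infty)$ with $q(\varrho_0)=L^2\ge 0$; hence $c_k\ge 0$ for every $k\ge \varrho_0$, so any nonpositive factor lies in the fixed initial segment $\{k:k\le\varrho_0\}$, independent of $n$. In the regime in which the lemma is invoked, namely $\varrho_0\le 1$, one has $\varrho_0\alpha_k\le\varrho_0\le1$ for all $k\ge1$, so \emph{every} $c_k>0$ and the computation above applies verbatim, needing only the constant $\exp(L^2\sum\alpha_k^2)$. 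For larger $\varrho_0$ the remaining difficulty is to split the product at the threshold $\lceil\varrho_0\rceil$, apply the clean estimate to the positive tail, and control the finitely many initial factors; this bounded initial contribution is precisely what the extra factor $\exp(\varrho_0)$ in $K_{\ref{t:prod-n}}$ is carried to absorb, and making that absorption quantitative is the delicate point of the argument.
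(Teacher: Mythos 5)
Your core computation is the paper's own proof in logarithmic dress: the paper applies $1+x\le e^x$ factor by factor, which is precisely your $\log(1+x)\le x$, and then bounds the harmonic sum; your integral comparison $\sum_{k=n_0}^n \alpha_k \ge \log\bigl((n+1)/n_0\bigr)$ is in fact cleaner and sharper than the paper's two-case estimate $\sum_{k=n_0}^n \alpha_k > \ln(n+1)-\ln(n_0-1)-1$, so you obtain the bound without even spending the factor $\exp(\varrho_0)$ carried in $K_{\ref{t:prod-n}}$. One point you under-sell: once every factor is strictly positive, your argument is already complete for \emph{all} $\varrho_0>0$, not just $\varrho_0\le 1$ --- the inequality $\log(1+x)\le x$ needs only $x>-1$, i.e.\ positivity of the factor, and nothing in your estimate uses smallness of $\varrho_0$. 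So the split at $\lceil\varrho_0\rceil$ would be needed only when some factor is genuinely negative.

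And there the step you defer is not merely delicate; it is impossible, because with negative factors the stated inequality is false and no absorption into $\exp(\varrho_0)$ can rescue it. Take $L=0$, $\varrho_0=10$, $n_0=1$, $n=2$: the left side is $(1-10)(1-5)=36$, while the right side is $e^{10}\,3^{-10}<1$. An even number of negative factors makes the product large and positive, and the same defect silently invalidates the paper's own term-by-term use of $1+x\le e^x$ (an upper bound on each factor controls the product only when the factors are nonnegative). The paper escapes in practice because in every invocation the factors are nonnegative: either the quadratic $k^2-\varrho_0 k+L^2$ is nonnegative from $k=1$ on, or --- as in the proof of \Lemma{t:clE-td-err} --- the product is explicitly restarted at an $n_0$ beyond which $1-\varrho_0\alpha_k+L^2\alpha_k^2>0$; note also that the lemma \emph{is} invoked with $\varrho_0>1$ (e.g.\ $\varrho_0=2\varrho$ with $\varrho\le 1$ there), so your ``regime $\varrho_0\le 1$'' framing is not quite accurate either. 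The correct repair is to add nonnegativity of the factors (equivalently $n_0\ge\varrho_0$, up to the boundary case) as a hypothesis, under which your proof closes with no case analysis at all; pursuing a quantitative control of the initial negative factors, as you propose, is a dead end.
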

\begin{proof}
By the inequality $1-x \leq \exp(-x)$,
\[
\prod_{k=n_0}^{n} [1-\varrho_0\alpha_k + L^2 \alpha_k^2] 
	\leq \exp(-\varrho_0\sum_{k=n_0}^n \alpha_k)\exp( L^2\sum_{k=n_0}^n \alpha_k^2)
	\leq \exp( -\varrho_0) K \exp(-\varrho_0\sum_{k=n_0}^n \alpha_k) 
\]
The remainder of the proof involves establishing the bound
\begin{equation}
 \exp(-\varrho_0\sum_{k=n_0}^n \alpha_k)   \le \exp(\varrho_0)\frac{n_0^{\varrho_0}}{(n+1)^{\varrho_0}}
\label{e:prod-n-pf}
\end{equation}
For $n_0=1$ this follows from the bound  $\sum_{k=1}^n \alpha_k \ge  \ln(n+1)$,   and for $n_0\ge 2$ the bound \eqref{e:prod-n-pf} follows from
   $\sum_{k=n_0}^n \alpha_k > \ln(n+1) - \ln(n_0-1) -1$.
\end{proof}

\begin{lemma}
\label{t:converge-sa}
	Under Assumptions A1-A3, let $\lambda = -\varrho_0 + ui$ denote an eigenvalue of matrix $A$ with largest real part. Then
\[
	\lim_{n\rightarrow\infty} n^{2\varrho} \Expect[\tiltheta_n^\transpose\tiltheta_n] = 0\,, \qquad \varrho < \varrho_0\text{ and } \varrho \leq \half
\]
\end{lemma}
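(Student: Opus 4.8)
The plan is to rewrite the target quantity as $n^{2\varrho}\Expect[\tiltheta_n^\transpose\tiltheta_n]=\Expect[\|\tiltheta_n^\varrho\|^2]$, with $\tiltheta_n^\varrho=n^\varrho\tiltheta_n$, and to bound the right-hand side through the scaled decomposition $\tiltheta_n^\varrho=\tiltheta_n^{\varrho,(1)}+\tiltheta_n^{\varrho,(2)}+\tiltheta_n^{\varrho,(3)}$ of \eqref{e:scaledDecomp}. By the triangle inequality in $L_2$, $\Expect[\|\tiltheta_n^\varrho\|^2]^{1/2}\le\sum_{i=1}^3\Expect[\|\tiltheta_n^{\varrho,(i)}\|^2]^{1/2}$, so it suffices to show $\Expect[\|\tiltheta_n^{\varrho,(i)}\|^2]\to 0$ for each $i$. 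The term $\tiltheta_n^{\varrho,(3)}=-n^{\varrho-1}Z_{n+1}$ is immediate: since $Z_{n+1}=\haf(\Phi_{n+1})$ is bounded in $L_2$ under (A1)--(A3), one has $\Expect[\|\tiltheta_n^{\varrho,(3)}\|^2]=O(n^{2\varrho-2})\to 0$ because $\varrho\le\half$.

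The structural fact driving the remaining two estimates is that the time-varying drift $M_n\eqdef\varrho_n I+A(n,\varrho)$ in \eqref{e:clE-1} and \eqref{e:clE-2} converges to $\varrho I+A$, which is Hurwitz: since $\varrho<\varrho_0$, every eigenvalue $\varrho+\lambda$ has real part at most $\varrho-\varrho_0<0$. I would fix $c$ with $0<c<\varrho_0-\varrho$, so that $(\varrho+c)I+A$ is also Hurwitz, and solve $((\varrho+c)I+A)^\transpose P+P((\varrho+c)I+A)=-I$ for some $P\succ 0$; this gives the quadratic Lyapunov function $V(x)=x^\transpose Px$ satisfying $x^\transpose[(\varrho I+A)^\transpose P+P(\varrho I+A)]x\le -2c\,V(x)$. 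Replacing $\varrho I+A$ by $M_n=\varrho I+A+O(\alpha_n)$ costs only $O(\alpha_n)$ perturbations, which are absorbed into the quadratic slack, yielding a one-step bound $V((I+\alpha_{n+1}M_n)x)\le(1-2c\,\alpha_{n+1}+L^2\alpha_{n+1}^2)\,V(x)$. Unrolling and invoking \Lemma{t:prod-n} with decay exponent $2c$ converts the spectral gap into the explicit contraction $\prod_{k=j+1}^{n}[1-2c\alpha_k+L^2\alpha_k^2]\le K_{\ref{t:prod-n}}(j/n)^{2c}$ for the product $G_{j,n}$ of one-step maps.

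For the martingale term $i=1$, the forcing $(n+1)^\varrho\Delta_{n+2}^m$ in \eqref{e:clE-1} is a martingale difference orthogonal to the $\clF_{n+1}$-measurable iterate, so the Lyapunov cross term vanishes and the per-step injected noise is exactly $\alpha_{n+1}^2(n+1)^{2\varrho}\Expect[(\Delta_{n+2}^m)^\transpose P\Delta_{n+2}^m]=O(n^{2\varrho-2})$. Applying the product bound gives $\Expect[V(\tiltheta_n^{\varrho,(1)})]\lesssim (n_1/n)^{2c}+n^{-2c}\sum_j j^{2c+2\varrho-2}=O(n^{2\varrho-1}+n^{-2c})$, which vanishes since $\varrho<\half$ (this strict inequality is forced in the regime of interest, where $\varrho<\varrho_0\le\half$). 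For the telescoping term $i=2$, the forcing $-(n+1)^\varrho\alpha_n[I+A]Z_{n+1}$ in \eqref{e:clE-2} is \emph{not} a martingale difference, so rather than a covariance recursion I would bound $\|\tiltheta_n^{\varrho,(2)}\|_{L_2}$ directly by the triangle inequality on the unrolled recursion: with $\|G_{j,n}\|_P\le K_{\ref{t:prod-n}}^{1/2}(j/n)^c$ and the $L_2$ forcing magnitude $\alpha_{n+1}\|(n+1)^\varrho\alpha_n[I+A]Z_{n+1}\|_{L_2}=O(n^{\varrho-2})$, one obtains $\|\tiltheta_n^{\varrho,(2)}\|_{L_2}\lesssim n^{-c}\sum_j j^{c+\varrho-2}\to 0$. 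Combining the three estimates through the $L_2$ triangle inequality completes the proof.

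I expect the main obstacle to be the Lyapunov-norm step. Because $A$ is only assumed Hurwitz and may be far from normal, the Euclidean operator norm of a single step can exceed $1$, so the spectral decay rate $\varrho_0-\varrho$ must be converted into a genuine per-step contraction through the weighted norm $V$, with the contraction constant $c$ chosen close enough to $\varrho_0-\varrho$ to beat the noise amplification $n^{2\varrho}$. The delicate bookkeeping is to ensure that the $O(\alpha_n)$ perturbations coming from the time-dependence $M_n=\varrho I+A+O(\alpha_n)$ (and from $\varrho_n=\varrho+\epsy(n,\varrho)$ in \Lemma{t:taylor-scale}) are all absorbed into the $L^2\alpha_k^2$ slack of \Lemma{t:prod-n}, so that the effective decay exponent remains $2c$; the boundary case $\varrho=\half$ (which can only occur when $\varrho_0>\half$) then reduces to the sharper statement of \Theorem{t:SAlinearized}.
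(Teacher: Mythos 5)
Your proposal is correct and follows essentially the same route as the paper's proof: the same three-term split of $\tiltheta_n$ into martingale, telescoping, and residual components, a weighted quadratic Lyapunov norm obtained from a shifted Lyapunov equation (your margin $c<\varrho_0-\varrho$ plays exactly the role of the paper's extra term $\alpha_{n+1}/\lambda_\circ$ coming from the $-I$ in $[A+\varrho I]T+T[A+\varrho I]^\transpose+I=0$), martingale orthogonality for the first term, a triangle-inequality bound on the unrolled recursion for the telescoping term, and \Lemma{t:prod-n} for the products of one-step contraction factors; working with the scaled iterates of \Lemma{t:taylor-scale} rather than scaling the unscaled bounds at the end is a purely cosmetic difference, and you correctly note that the $O(\alpha_n)$ drift perturbations are absorbed into the $L^2\alpha_k^2$ slack. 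One remark on the boundary: your parenthetical ``$\varrho<\varrho_0\le\half$'' misreads the hypotheses (only Hurwitz $A$ is assumed, so $\varrho_0>\half$ is possible and $\varrho=\half$ is then admitted), but your subsequent deferral of that case handles it no worse than the paper does --- at $\varrho=\half$ both your estimate and the paper's yield only $n\,\Expect[\|\tiltheta_n\|^2]=O(1)$, and indeed \Theorem{t:SAlinearized}~(i) shows the limit there is $\trace(\Sigma_\theta)$, generally nonzero --- so the displayed limit in the lemma should be read with $\varrho<\half$ strict, a defect of the statement rather than of your argument.
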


\begin{proof}
	Recall the decomposition of $\tiltheta_n$ in \eqref{e:unscaledDecomp}: $\tiltheta_n = \tiltheta_n^{(1)} + \tiltheta_n^{(2)} + \tiltheta_n^{(3)}$, with $\tiltheta_n^{(1)}, \tiltheta_n^{(2)}$ evolving as 
	\begin{subequations} 
		\begin{alignat}{3}
		& \tiltheta_{n+1}^{(1)} 
		&&=     \tiltheta_n^{(1)} 
		+ \alpha_{n+1} \bigl[ A  \tiltheta_n^{(1)}     +   \Delta_{n+2}^m  \bigr]   \,,    && \tiltheta_0^{(1)} = \tiltheta_0
		\label{e:clEa-1-app}
		\\
		&
		\tiltheta_{n+1}^{(2)} 
		&&=     \tiltheta_n^{(2)} 
		+ \alpha_{n+1} \bigl[   A  \tiltheta_n^{(2)}    -    \alpha_n [I+A]Z_{n+1}  \bigr]   \,, \qquad && \tiltheta_1^{(2)} = Z_1
		\label{e:clEa-2-app}%
		\end{alignat}%
		\label{e:decomp2-app}%
\end{subequations}
For fixed $\varrho < \varrho_0\text{ and } \varrho \leq \half$, Let $T>0$ solve the Lyapunov equation $[A+\varrho I]T+ T[A+\varrho I]^\transpose + I= 0$, which exists since $A+\varrho I $ is Hurwitz. Define the norm of $\tiltheta_n$ by $\|\tiltheta_n\|_T \eqdef \sqrt{\Expect[\tiltheta_n^\transpose T \tiltheta_n]}$.
	
	First consider $\tiltheta_n^{(1)}$. Since the martingale difference $\Delta_{n+2}^m$ is uncorrelated with $\tiltheta_n^{(1)}$, denoting $e_n = \|\tiltheta_n^{(1)}\|_T^2, b_{n+2} = \|\Delta_{n+2}^m\|_T^2$, we obtain the following from \eqref{e:clEa-1-app}:
	
\begin{equation}
	\label{e:mart-square}
	e_{n+1} = \|[I+\alpha_{n+1}A]\tiltheta_n^{(1)}\|_T^2 + b_{n+2} 
\end{equation}
	
	Letting $\lambda_\circ>0$ denote the largest eigenvalue of $T$, we arrive at the following simplification of the first term in \eqref{e:mart-square}
\begin{equation}
	\label{e:sq-simp-lya-mart}
	\begin{aligned}
	\|[I+\alpha_{n+1}A]\tiltheta_n^{(1)}\|_T^2 &
	= \Expect\bigl[(\tiltheta_n^{(1)})^\transpose[T -2\alpha_{n+1}\varrho T- \alpha_{n+1}I + \alpha_{n+1}^2 ATA^\transpose]\tiltheta_n^{(1)}\bigr] \\
	& \leq  \Expect\bigl[(\tiltheta_n^{(1)})^\transpose[T -2\alpha_{n+1}\varrho T-\frac{1}{\lambda_\circ}\alpha_{n+1}T + \alpha_{n+1}^2 ATA^\transpose]\tiltheta_n^{(1)}\bigr] \\
	&\leq [1-2\alpha_{n+1}\varrho-\alpha_{n+1}/\lambda_\circ + \alpha_{n+1}^2L^2] \|\tiltheta_n\|_T^2
	\end{aligned}
\end{equation}
where $L$ denotes the induced operator norm of $A$ with respect to the norm $\|\varble \|_T$.  \spm{yechh!  But I can't think of anything better.}
We then obtain the following recursive bound from \eqref{e:mart-square} and \eqref{e:sq-simp-lya-mart}
\[
	e_{n+1} \leq [1-(2\varrho+1/\lambda_\circ)\alpha_{n+1} +L^2\alpha_{n+1}^2] e_n + \alpha_{n+1}^2 K
\]
	where $K=\sup_{n\geq 1} b_n$. $K$ is finite since $b_n$ converges to $\Expect_\pi[(\Delta_n^m)^\transpose T \Delta_n^m]$ geometrically fast.
	
	Consequently, for each $n\geq 1$,
\[
	e_{n+1} \leq e_{0}\prod_{k=1}^{n+1}[1- (2\varrho+1/\lambda_\circ)\alpha_k +L^2\alpha_k^2]  + K \sum_{k=1}^{n+1} \alpha_k^2 \prod_{l=k+1}^{n+1} [1- (2\varrho+1/\lambda_\circ)\alpha_l +L^2\alpha_l^2] 
\]
	By \Lemma{t:prod-n},
\[
	\begin{aligned}
	e_{n+1}	 \leq e_{1}K_{\ref{t:prod-n}} \frac{1}{(n+2)^{2\varrho+1/\lambda_\circ}} + \frac{KK_{\ref{t:prod-n}} }{(n+2)^{2\varrho+1/\lambda_\circ}}\sum_{k=1}^{n+1} \alpha_k^{2-2\varrho - 1/\lambda_\circ}
	\end{aligned}
\]
	Therefore, $e_{n+1} \rightarrow 0$ at rate at least $n^{-2\varrho}$.
	
	For $\tiltheta_n^{(2)}$, we use similar arguments. We obtain the following from \eqref{e:clEa-2-app} by the triangle inequality.
\[
	\|\tiltheta_{n+1}^{(2)}\|_T \leq \|[I+\alpha_{n+1}A]\tiltheta_n^{(2)}\|_T + \alpha_n\alpha_{n+1}\|[I+A]Z_{n+1}\|_T
\]
Using the same argument as in \eqref{e:sq-simp-lya-mart}, along with the inequality $\sqrt{1+x} \leq 1 + \half x$,
\[
	\begin{aligned}
	\|[I+\alpha_{n+1}A]\tiltheta_n^{(2)}\|_T 
																& \leq \|\tiltheta_n^{(2)}\|_T\sqrt{1 -2\alpha_{n+1}\varrho- \alpha_{n+1}/\lambda_\circ + \alpha_{n+1}^2L^2} \\
																& \leq \|\tiltheta_n^{(2)}\|_T(1-\alpha_{n+1}\varrho - \alpha_{n+1}/(2\lambda_\circ) + \half\alpha_{n+1}^2L^2)
	\end{aligned}
\]
	 Denote $K'=\sup_{n\geq 1} \|[I+A]Z_{n+1}\|_T$.
\[
	\|\tiltheta_{n+1}^{(2)}\|_T \leq [1-(\varrho+ 1/(2\lambda_\circ))\alpha_{n+1}+\half\alpha_{n+1}^2L^2]\|\tiltheta_n^{(2)}\|_T  + \alpha_n\alpha_{n+1}K' 
\]
	Then by the same argument for the martingale difference term, we can show that $\|\tiltheta_n^{(2)}\|_T\rightarrow 0$ at rate at least $n^{-\varrho}$. 
	
	Given $\|\tiltheta_n^{(3)}\|_T=\alpha_n\|Z_{n+1}\|_T$ converges to zero at rate $1/n$, the proof is completed by the triangle inequality.
\end{proof}

\subsection{Proof of \Proposition{t:mart-3}}
\label{s:app-msr-msr-mart}

\paragraph*{(i)}

Recall that $\{ \Delta^m_n\}$ is a martingale difference sequence.
It is thus an uncorrelated sequence for which $\tiltheta_n^{(1)} $ and $\Delta^m_{n+k}$ are uncorrelated for $k\ge 2$. The following recursion is obtained from these facts and \eqref{e:clEa-1}
\[
\Cov(\theta_{n+1}^{(1)}) = \Cov(\theta_n^{(1)}) + \alpha_{n+1}\Bigl[\Cov(\theta_n^{(1)})A^\transpose + A\Cov(\theta_n^{(1)}) + \alpha_{n+1}[A\Cov(\theta_n^{(1)}) A^\transpose + \Sigma_{\Delta_{n+2}}]\Bigr]
\]
Multiplying each side by $n+1$ gives
\[
\begin{aligned}
(n+1)\Cov(\theta_{n+1}^{(1)})  =& n\Cov(\theta_n^{(1)}) + \Cov(\theta_n^{(1)}) + \Cov(\theta_n^{(1)})A^\transpose + A\Cov(\theta_n^{(1)}) \\
	&+ \alpha_{n+1}[A\Cov(\theta_n^{(1)}) A^\transpose + \Sigma_{\Delta_{n+2}}] \\
 = & n\Cov(\theta_n^{(1)}) + \alpha_{n+1} \Bigl[(1+\frac{1}{n})[n\Cov(\theta_n^{(1)}) + n\Cov(\theta_n^{(1)})A^\transpose + An\Cov(\theta_n^{(1)})] \\
 	&+ A\Cov(\theta_n^{(1)}) A^\transpose + \Sigma_{\Delta_{n+2}} \Bigr]
\end{aligned}
\]

The following argument will be used repeatedly through this Appendix:  the recursion for $n\Cov(\theta_n^{(1)}) $ is  a \textit{deterministic} SA recursion for $n\Cov(\theta_n^{(1)})$, and is regarded as an Euler approximation to the stable linear system
\begin{equation}
\ddt \clX(t)  =(1+e^{-t})[\clX(t) + A \clX(t) + \clX(t) A^\transpose] + \Sigma_{\Delta}  + e^{-t}A\clX(t)A^\transpose 
\label{e:SigmaODE}
\end{equation}
Stability follows from the assumption that $\half I+A$ is Hurwitz.
\spm{Note big changes}
The standard justification of the Euler approximation is through the choice of timescale: let $t_n=\sum_{k=1}^n\alpha_k$  and    let $\clX^n(t)$ denote the solution to this ODE  on $[t_n,\infty)$ with $\clX^n(t_n) = n\Cov(\theta_n^{(1)}) $,  $t\ge t_n$, for any $n\ge 1$.  Using standard ODE arguments \cite{bor08a}, 
\[
\sup_{k\ge n}  \|  \clX^n(t_k) -  k \Sigma_k^{(1)} \|  = O(1/n)
\]
Exponential convergence of $\clX $ to $\Sigma_\theta$ implies convergence of $\{n\Cov(\theta_n^{(1)})\}$ to zero at rate 
$1/n^\delta$ for some $\delta = \delta(\half I+A, \Sigma_{\Delta})>0$.  
\qed

\paragraph*{(ii)}
Denote $e_n^{\varrho_0} = \Expect[|v^\transpose\tiltheta_n^{\varrho_0}|^2]$ and $\lambda = -\varrho_0 + ui$. We begin with the proof that
\begin{equation}
\liminf_{n\to\infty}  e^{\varrho_0}_n > 0
\label{e:liminfe}
\end{equation}

With $v^\transpose [I\lambda - A] =0$, we have
$v^\transpose [I\varrho_n + A(n,\varrho)] =  [\epsy_v(n,\varrho_0) + u i] v^\transpose $,   with $\epsy_v(n,\varrho_0) = O(n^{-1})$.   
Applying  \eqref{e:clE-1} gives 
\[
v^\transpose   \tiltheta_{n+1}^{{\varrho_0},(1)}  
=     v^\transpose   \tiltheta_n^{{\varrho_0},(1)} 
+ \alpha_{n+1} \bigl[ [\epsy_v(n,{\varrho_0}) +u i]  v^\transpose \tiltheta_n^{{\varrho_0},(1)}     +  (n+1)^{\varrho_0}   v^\transpose  \Delta_{n+2}^m  \bigr]   
\]
Let $\overline{v}$ denote the conjugate of $v$. Consequently, with  $\sigma^2_n(v) =  v^\transpose \Sigma_{\Delta_n}  \overline{v}$,  
\[ 
e^{\varrho_0}_{n+1}  =    \bigl[[1+  \epsy_v(n,{\varrho_0}) /(n+1)]^2 + u^2/(n+1)^2 \bigr] e^{\varrho_0}_n     +   (n+1)^{2{\varrho_0} - 2}   \sigma^2_{n+2}(v) 
\]
$V$-uniform ergodicity implies that  $\sigma^2_n(v) \to v^\transpose \Sigma_{\Delta}  \overline{v} >0$ as $n\to\infty$ at a geometric rate.   
Fix $n_0>0$ so that $\sigma^2_{n_0}(v)>0$,  and hence also $e^{\varrho_0}_{n_0+1}>0$.    
We also assume that  $1+  \epsy_v(n,{\varrho_0}) /(n+1)>0$ for   $n\ge n_0$,  which is possible since   $ \epsy_v(n,{\varrho_0} )=O(n^{-1})$.

For $N > n_0$ we obtain the uniform bound
\[
\log(e^{\varrho_0}_N )  \ge \log(e^{\varrho_0}_{n_0+1})  + 2 \sum_{n=n_0+2}^\infty \log  [1 - | \epsy_v(n,{\varrho_0}) |/(n+1)]  >-\infty
\]
which proves that $ \liminf_{n\to\infty} e_n^{\varrho_0} =\liminf_{n\to\infty}  v^\transpose \Sigma_n^{{\varrho_0},(1)} \overline{v} >0$.

The proof of an upper bound for ${\varrho_0}<1/2$:   by concavity of the logarithm,
\[ 
\log(e^{\varrho_0}_{n+1} )  \le \log\bigl( \bigl[ [1+  \epsy_v(n,{\varrho_0}) /(n+1)]^2 + u^2/(n+1)^2\bigr]
e^{\varrho_0}_n   \bigr)   
+ 
K   (n+1)^{2{\varrho_0} - 2}     
\]
where $K = \sup_{n >n_0}     \bigl[ [1+  \epsy_v(n,{\varrho_0}) /(n+1)]^2 + u^2/(n+1)^2 \bigr]^{-1} [e^{\varrho_0}_n]^{-1}
\sigma^2_{n+2}(v) $.    Using concavity of the logarithm once more gives  
\[ 
\log(e^{\varrho_0}_{n+1} )  \le   \log(e^{\varrho_0}_n )   + 2  \epsy_v(n,{\varrho_0}) /(n+1) +\frac{\epsy_v(n,{\varrho_0})^2}{(n+1)^2} + \frac{u^2}{(n+1)^2}
+ 
K   (n+1)^{2{\varrho_0} - 2}   
\]
which gives the uniform upper bound
\[
\log(e^{\varrho_0}_N )  \le  \log(e^{\varrho_0}_{n_0+1})  +   \sum_{n=n_0+2}^\infty  \Bigl(2 \frac{ | \epsy_v(n,{\varrho_0}) | }{n+1) }  +\frac{\epsy_v(n,{\varrho_0})^2}{(n+1)^2} + \frac{u^2}{(n+1)^2} + K   (n+1)^{2{\varrho_0} - 2}   \Bigr)  <\infty
\]
This proves that $ \limsup_{n\to\infty} e_n^{\varrho_0}=\limsup_{n\to\infty}v^\transpose \Sigma_n^{{\varrho_0},(1)} \overline{v} < \infty$.
\qed

\subsection{Proof for \Proposition{t:tele-3}}
\label{s:app-msr-tele}

\paragraph*{(i)}

Denote $\clD_n = \epsy(n,\varrho)I + A(n, \varrho) - A$.    
We can rewrite \eqref{e:clE-2} as
\begin{equation}
\label{e:clE-2-var}
\begin{aligned}
\tiltheta_{n+1}^{\varrho,(2)} 
&=     \tiltheta_n^{\varrho,(2)} 
+ \alpha_{n+1} \bigl[   [  \half I + A]  \tiltheta_n^{\varrho,(2)} + \clD_n\tiltheta_n^{\varrho,(2)}    -  \alpha_n(n+1)^\varrho  [I+A]    Z_{n+1}
\bigr]  \\
&= \bigl[I
+ \alpha_{n+1} [  \half I + A]\bigr]\tiltheta_n^{\varrho,(2)} + \alpha_{n+1} \clD_n\tiltheta_n^{\varrho,(2)}    -  \alpha_{n+1}\alpha_n(n+1)^\varrho  [I+A]    Z_{n+1}
\end{aligned}
\end{equation}

Let $T > 0$ solve the Lyapunov equation
\[
[\half I + A]^\intercal T + T[\half I + A ] + I = 0
\]
As in the proof of \Lemma{t:converge-sa}, a solution exists because  $\half I + A$ is Hurwitz.   Adopting the familiar notation   $\|\tiltheta_n^{\varrho,(2)}\|_T \eqdef \sqrt{\Expect[(\tiltheta_n^{\varrho,(2)})^\intercal T\tiltheta_n^{\varrho,(2)}]}$,   the triangle inequality applied to \eqref{e:clE-2-var} gives
\begin{equation}
\label{e:clE-2-lya}
\|\tiltheta_{n+1}^{\varrho,(2)}\|_T
\leq \|\bigl[I+ \alpha_{n+1} [  \half I + A]\bigr]\tiltheta_n^{\varrho,(2)}\|_T
+ \alpha_{n+1}\|\clD_n\|_T\|\tiltheta_n^{\varrho,(2)}\|_T + \alpha_{n+1}\alpha_n (n+1)^{\varrho}\|[I+A]Z_{n+1}\|_T
\end{equation}
The first term can be simplified by the Lyapunov equation.
\[
\begin{aligned}
\|\bigl[I+ \alpha_{n+1} [  \half I + A]\bigr]\tiltheta_n^{\varrho,(2)}\|_T^2 =& \Expect\bigl[(\tiltheta_n^{\varrho,(2)})^\intercal\bigl[ T - \alpha_{n+1} I + \alpha_{n+1}^2 [  \half I + A]^\intercal T[  \half I + A] \bigr]\tiltheta_n^{\varrho,(2)}\bigr] \\
\leq &\Expect\bigl[(\tiltheta_n^{\varrho,(2)})^\intercal\bigl[ T - \frac{\alpha_{n+1}}{\lambda_\circ} T + \alpha_{n+1}^2 [  \half I + A]^\intercal T[  \half I + A] \bigr]\tiltheta_n^{\varrho,(2)}\bigr] \\
\leq & \|\tiltheta_n^{\varrho,(2)}\|_T^2-\frac{\alpha_{n+1}}{\lambda_\circ}\|\tiltheta_n^{\varrho,(2)}\|_T^2+ \alpha_{n+1}^2 L^2 \|\tiltheta_n^{\varrho,(2)}\|_T^2
\end{aligned}
\]
where $L$ is the induced operator norm of $\half I + A$, and $\lambda_\circ>0$ denotes its largest eigenvalue.

Consequently, by the inequality $\sqrt{1+x} \leq 1 + \half x$,
\[
\begin{aligned}
\|\bigl[I+ \alpha_{n+1} [  \half I + A]\bigr]\tiltheta_n^{\varrho,(2)}\|_T \leq \|\tiltheta_n^{\varrho,(2)}\|_T\sqrt{1-\frac{\alpha_{n+1}}{\lambda_\circ} + \alpha_{n+1}^2 L^2} \leq \|\tiltheta_n^{\varrho,(2)}\|_T(1-\frac{\alpha_{n+1}}{2\lambda_\circ}+\half \alpha_{n+1}^2 L^2)
\end{aligned}
\]
Fix $n_0>0$ such that for $n\geq n_0$,
\[
\begin{aligned}
1-\frac{\alpha_{n+1}}{2\lambda_\circ}+\half \alpha_{n+1}^2 L^2+\alpha_{n+1}\|\clD_n\|_T \leq  1-\frac{\alpha_{n+1}}{4\lambda_\circ}
\end{aligned}
\]
This is possible since $\|\clD_n\|_T=O(n^{-1})$. 

Denote $\delta = \min(\frac{1}{4\lambda_\circ},\frac{1}{4})$ and $K = \sup_{n\geq n_0}\|[I+A]Z_{n+1}\|_T$, which is finite because $\|Z_{n+1}\|_T$ converges. We obtain the following from \eqref{e:clE-2-lya}
\begin{equation}
\label{e:cle-2-inequ}
\begin{aligned}
\|\tiltheta_{n+1}^{\varrho,(2)}\|_T\leq  & \|\tiltheta_n^{\varrho,(2)}\|_T(1-\delta\alpha_{n+1})+ \alpha_{n+1}^{1/2}\alpha_n K  \\
\leq & \|\tiltheta_n^{\varrho,(2)}\|_T(1-\delta\alpha_{n+1})+ \alpha_n^{3/2} K 
\end{aligned}
\end{equation}
Apply \eqref{e:cle-2-inequ} repeatedly for $n\geq n_0$
\[
\begin{aligned}
\|\tiltheta_{n+1}^{\varrho,(2)}\|_T
\leq & \|\tiltheta_{n_0}^{\varrho,(2)}\|_T\prod_{k=n_0+1}^{n+1} (1-\delta\alpha_k)  + K \sum_{k=n_0}^{n}  \alpha_k^{3/2}\prod_{l=k+1}^{n} (1-\delta \alpha_l) \\
\leq &  \|\tiltheta_{n_0}^{\varrho,(2)}\|_T\exp(\delta)\frac{n_0^\delta}{(n+2)^\delta}  + \frac{K\exp(\delta)}{(n+1)^{\delta}} \sum_{k=n_0}^{n} k^{-\frac{3}{2} + \delta}
\end{aligned}
\]
where $\sum_{k=1}^\infty k^{-\frac{3}{2}+\delta} <\infty$ for  $\delta \leq 1/4$. Therefore,  $\|\tiltheta_n^{\varrho,(2)}\|_T\rightarrow 0$ at rate at least $n^{-\delta}$. 

The desired conclusion follows:  
letting $\lambda_\bullet>0$ denote the smallest eigenvalue of $T$,
\[
\Sigma_n^{\varrho,(2)} \leq \Expect[(\tiltheta_n^{\varrho,(2)})^\intercal \tiltheta_n^{\varrho,(2)}] I \leq \frac{1}{\lambda_\bullet}\|\tiltheta_n^{\varrho,(2)}\|_T^2I
\]
\qed

\paragraph*{(ii)}
Multiplying both sides of \eqref{e:clE-2} by $v^\transpose$ gives
\begin{equation}
\label{e:clE-2-v}
\begin{aligned}
v^\transpose \tiltheta_{n+1}^{\varrho_0, (2)} &= v^\transpose \tiltheta_n^{\varrho_0, (2)} + \alpha_{n+1}\bigl[[\epsy_v(n,\varrho_0) + ui]v^\transpose\tiltheta_n^{\varrho_0,(2)} - (1-\varrho_0 +ui)\alpha_n(n+1)^{\varrho_0} v^\transpose Z_{n+1}\bigr] \\
& = \bigl[1+ \alpha_{n+1}[\epsy_v(n,\varrho_0) + ui]\bigr]v^\transpose \tiltheta_n^{\varrho_0, (2)} - (1-\varrho_0 + ui)\alpha_n\alpha_{n+1} (n+1)^{\varrho_0} v^\transpose Z_{n+1}
\end{aligned}
\end{equation}
With $\|v^\transpose \tiltheta_n^{\varrho_0,(2)}\|_2  \eqdef \sqrt{\Expect[|v^\transpose \tiltheta_n^{\varrho_0,(2)}|^2]}$, we obtain the following from \eqref{e:clE-2-v} by the triangle inequality
\begin{equation}
\label{e:cle-2-e}
\begin{aligned}
\|v^\transpose \tiltheta_{n+1}^{\varrho_0,(2)}\|_2
	\leq \bigl|1+ \alpha_{n+1}[\epsy_v(n,\varrho_0) + ui]\bigr| \|v^\transpose \tiltheta_n^{\varrho_0,(2)}\|_2
		+  \bigl|1-\varrho_0 + ui \bigr|\alpha_n\alpha_{n+1} (n+1)^{\varrho_0} \|v^\transpose Z_{n+1}\|_2 
\end{aligned}
\end{equation}	
By the inequality $\sqrt{1+x} \leq 1 + \half x$, we have
\[
\bigl|1+ \alpha_{n+1}\epsy_v(n,\varrho_0) + \alpha_{n+1}u i \bigr| \leq 1+ \alpha_{n+1}\epsy_v(n,\varrho_0) + \half \alpha_{n+1}^2\epsy_v(n,\varrho_0) ^2+ \half \alpha_{n+1}^2u^2
\]
Fix $n_0>0$ such that for $n\geq n_0$,
\[
1+ \alpha_{n+1}\epsy_v(n,\varrho_0) + \half \alpha_{n+1}^2\epsy_v(n,\varrho_0) ^2+ \half \alpha_{n+1}^2u^2\leq 1+\alpha_{n+1}^{3/2}
\]
which is possible since $\epsy_v(n,\varrho_0) = O(n^{-1})$. 
With $K=\sup_{n\geq n_0} |1-\varrho_0 + ui|  \| v^\transpose Z_{n+1}\|_2$, we obtain the following bound from \eqref{e:cle-2-e}:
\begin{equation}
\label{e:clE-2-rho0}
\|v^\transpose \tiltheta_{n+1}^{\varrho_0,(2)}\|_2\leq (1+\alpha_{n+1}^{3/2})\|v^\transpose \tiltheta_n^{\varrho_0,(2)}\|_2+  \alpha_n^{2-\varrho_0} K
\end{equation}
Iterating \eqref{e:clE-2-rho0} gives,
\[
\begin{aligned}
\|v^\transpose \tiltheta_{n+1}^{\varrho_0,(2)}\|_2&\leq \|v^\transpose \tiltheta_{n_0}^{\varrho_0,(2)}\|_2\prod_{k=n_0+1}^{n+1} (1+ \alpha_{k}^{3/2}) + K\sum_{k=n_0}^{n}\alpha_{k}^{2-\varrho_0}\prod_{l=k+1}^n (1+\alpha_l^{3/2}) \\
&\leq \|v^\transpose \tiltheta_{n_0}^{\varrho_0,(2)}\|_2\exp(\sum_{k=n_0+1}^{n+1} k^{-2/3})  + K\sum_{k=n_0}^{n} k^{-2+\varrho_0} \exp(\sum_{l=k+1}^{n} l^{-3/2}) 
\end{aligned}
\]
$\limsup_{n\rightarrow \infty} \| v^\transpose \tiltheta_n^{\varrho_0,(2)})\|_2 <\infty$, since it is assumed that $\varrho_0 <\half$.
\qed

\subsection{Proof of \Proposition{t:fine-comp}}
\label{s:app-finer}

\paragraph*{(i)}

Since $\Delta_{n+2}^m$ is uncorrelated with $\tiltheta_n^{(1)}$, the following recursion follows from \eqref{e:clEa-1}:
\[
\Cov(\theta_{n+1}^{(1)}) = \Cov(\theta_n^{(1)}) + \alpha_{n+1}\Bigl[\Cov(\theta_n^{(1)})A^\transpose + A\Cov(\theta_n^{(1)}) + \alpha_{n+1}[A\Cov(\theta_n^{(1)}) A^\transpose + \Sigma_{\Delta_{n+2}}]\Bigr]
\]
Take $\varrho=1/2$ in the definition of $\tiltheta^{\varrho,(1)}$ and $\Sigma_n^{\varrho,(1)} =\Expect[\tiltheta^{\varrho,(1)}(\tiltheta^{\varrho,(1)})^\transpose] = n\Cov(\theta_n^{(1)})$. Multiplying each side of the equation by $n+1$ gives
\begin{equation}
\label{e:mart-sigma}
\Sigma_{n+1}^{\varrho,(1)} = \Sigma_n^{\varrho,(1)} + \alpha_{n+1}\Bigl[(1+\frac{1}{n})\bigl[\Sigma_n^{\varrho,(1)}+\Sigma_n^{\varrho,(1)}A^\transpose + A\Sigma_n^{\varrho,(1)}\bigr] + \frac{1}{n}A\Sigma_n^{\varrho,(1)} A^\transpose + \Sigma_{\Delta_{n+2}}  \Bigr] 
\end{equation}

Recall that $\Sigma_\theta$ solves the Laypunov equation $\Sigma+ \Sigma A^\transpose +A\Sigma + \Sigma_\Delta= 0$. Denoting $E_n=\Sigma_n^{\varrho,(1)} - \Sigma_\theta$, the following identity holds
\[
\Sigma_n^{\varrho,(1)}+\Sigma_n^{\varrho,(1)}A^\transpose + A\Sigma_n^{\varrho,(1)} = E_n + E_n A^\transpose + A E_n-\Sigma_\Delta
\]
Subtracting $\Sigma_\theta$ from both sides of \eqref{e:mart-sigma} gives the recursion
\begin{equation}
\label{e:mart-sigma-bar}
\begin{aligned}
E_{n+1}= E_n + \alpha_{n+1}\Bigl[&(1+\frac{1}{n})\bigl[E_n + E_nA^\transpose  + AE_n\bigr]
+\frac{1}{n}AE_n A^\transpose \\
&+\frac{1}{n}A\Sigma_\theta A^\transpose - \frac{1}{n}\Sigma_{\Delta} -\Sigma_{\Delta} +\Sigma_{\Delta_{n+2}} \Bigr]
\end{aligned}
\end{equation}
Similar to the decomposition in \eqref{e:decomp2}, we have $E_n= E_n^{(1)} + E_n^{(2)}$, each evolving as
\begin{subequations}
	\begin{alignat}{1}
	E_{n+1}^{(1)}
	&=E_n^{(1)} + \alpha_{n+1}\Bigl[(1+\frac{1}{n})\bigl[E_n^{(1)}+E_n^{(1)} A^\transpose + AE_n^{(1)}\bigr] + \frac{1}{n}AE_n^{(1)} A^\transpose +\frac{1}{n}\bigl[A\Sigma_\theta A^\transpose - \Sigma_{\Delta}\bigr]\Bigr] 
	\label{e:mart-sigma-bar-a}
	\\
	E_{n+1}^{(2)}
	&= E_n^{(2)} + \alpha_{n+1}\Bigl[(1+\frac{1}{n})\bigl[E_n^{(2)}+ E_n^{(2)}A^\transpose + AE_n^{(2)}\bigr] + \frac{1}{n}A E_n^{(2)} A^\transpose + \Sigma_{\Delta_{n+2}}-\Sigma_{\Delta} \Bigr] 
	\label{e:mart-sigma-bar-b}
	\end{alignat}%
	\label{e:mart-sigma-decomp}%
\end{subequations}%
Since $\Sigma_{\Delta_{n+2}} - \Sigma_{\Delta}$ converges to zero geometrically fast, $\{E_n^{(1)}\}$ converges to zero faster than $\{E_n^{(2)}\}$. 
\spm{\bl{SPM to elaborate on ``converges faster".} }

Multiplying each side of \eqref{e:mart-sigma-bar-a} by $n+1$ gives
\[
\begin{aligned}
(n+1)E_{n+1}^{(1)}
&= (n+1)E_n^{(1)}+ (1+\frac{1}{n})\bigl[E_n^{(1)}+E_n^{(1)} A^\transpose + AE_n^{(1)}\bigr] + \frac{1}{n}\bigl[AE_n^{(1)} A^\transpose +A\Sigma_\theta A^\transpose - \Sigma_{\Delta}\bigr] \\
& = nE_n^{(1)} +\frac{1}{n}\Bigl[(1+\frac{1}{n})\bigl[2nE_n^{(1)}+nE_n^{(1)} A^\transpose + AnE_n^{(1)}\bigr] +A\Sigma_\theta A^\transpose - \Sigma_{\Delta}+\clE_n^{\bullet,(1)}  \Bigr]
\end{aligned}
\]
with the error term $\clE_n^{\bullet,(1)} = AE_n^{(1)} A^\transpose - E_n$. Note that $A\Sigma_\theta A^\transpose - \Sigma_{\Delta} =[A+I]\Sigma_\theta[A+I]^\transpose $ is positive definite.

The recursion for $\{nE_n^{(1)} \}$ is treated as in  the proof of   \Proposition{t:mart-3}~(i).   Consider the matrix ODE,
\begin{equation}
\ddt \clX(t)  =(1+e^{-t})[2\clX(t) + \clX(t) A^\transpose + A \clX(t)] + A\Sigma_\theta A^\transpose - \Sigma_{\Delta} + e^{-t}[A\clX(t)A^\transpose -\clX(t)]
\label{e:Sigma-ODE-a}
\end{equation}
Let $t_n=\sum_{k=1}^n 1/k$  and    let $\clX^n(t)$ denote the solution to this ODE  on $[t_n,\infty)$ with $\clX^n(t_n) = nE_n^{(1)} $,  $t\ge t_n$, for any $n\ge 1$.  We then obtain as previously,     \spm{You can't repeat phrases like this --- we really should have a lemma, or quote a reference: Using standard ODE arguments we obtain}
\[
\sup_{k\ge n}  \|  \clX^n(t_k) -   kE_k^{(1)} \|  = O(1/n)
\]
Recall that $\Sigma_{\sharp}^{(1)} \ge 0$ is the solution to the Lyapunov equation \eqref{e:SigLya-2}. Exponential convergence of $\clX$ to $\Sigma_{\sharp}^{(1)}$ implies convergence of $\{nE_n^{(1)}\}$ at rate $1/n^\delta$
for $\delta=\delta(A+I, \Sigma_{\Delta})>0$.  Therefore, $nE_n = \Sigma_{\sharp}^{(1)} + O(n^{-\delta})$.

Given $\Cov(\theta_n^{(1)}) = n^{-1}\Sigma_\theta + n^{-1}E_n$, we have
\[
\Cov(\theta_n^{(1)}) = n^{-1}\Sigma_\theta + n^{-2} \Sigma_{\sharp}^{(1)} + O(n^{-2-\delta})
\]
\qed

\paragraph*{(ii)}

We focus on $R_n^{(2,1),(1)}$ since $R_n^{(1), (2,1)} = [R_n^{(2,1),(1)}]^\transpose$. Recall the update forms of $\tiltheta_n^{(1)}$ and $\tiltheta_n^{(2,1)}$ in \eqref{e:clEa-1} and \eqref{e:clE-finer-1} respectively, where $\tiltheta_n^{(1)}$ is uncorrelated with the martingale difference sequence $\{\haDelta_{n+k}^m\}$ for $k\ge 2$ and $\tiltheta_n^{(2,1)}$ is uncorrelated with $\{\Delta_{n+k}^m\}$ for $k\ge 2$. With $R_n^{(2,1),(1)} = \Expect[\tiltheta_n^{(2,1)}(\tiltheta_n^{(1)})^\transpose]$, the following is obtained from these facts:
\[
\begin{aligned}
R_{n+1}^
{(2,1),(1)} =R_n^{(2,1),(1)} + \alpha_{n+1} \bigl[R_n^{(2,1),(1)}A^\transpose &+ AR_n^{(2,1),(1)} + \alpha_{n+1}AR_n^{(2,1),(1)}A^\transpose\\
& - \alpha_n\alpha_{n+1} [I+A]\Cov(\haDelta_{n+2}^m,\Delta_{n+2}^m)\bigr]
\end{aligned}
\]
Denote $C_n = nR_n^{(2,1),(1)}$.
Multiplying both sides of the previous equation by $n+1$ gives
\[
C_{n+1} = C_n +\alpha_{n+1}\bigl[(1+n^{-1})[C_n+C_nA^\transpose + AC_n] + \alpha_nAC_nA^\transpose - \alpha_n[I+A]\Cov(\haDelta_{n+2}^m,\Delta_{n+2}^m) \bigr] 
\]
Multiplying each side of this equation by $n+1$ once more results in
\[
\begin{aligned}
(n+1)C_{n+1} &= (n+1)C_n +(1+n^{-1})[C_n+C_nA^\transpose + AC_n] + \alpha_nAC_nA^\transpose - \alpha_n[I+A] \Cov(\haDelta_{n+2}^m,\Delta_{n+2}^m) \\
&= nC_n + \alpha_n\bigl[ (1+n^{-1})[2nC_n+nC_nA^\transpose + AnC_n] - [I+A] \Cov_\pi(\haDelta_{n+2}^m,\Delta_{n+2}^m) + \clD_{n+1}^{(2)} \bigr]
\end{aligned}
\]
where the error term $\clD_{n+1}^{(2)}$ consists of two components: $[I+A][\Cov_\pi(\haDelta_{n+2}^m,\Delta_{n+2}^m)-\Cov(\haDelta_{n+2}^m,\Delta_{n+2}^m)]$ that converges to zero at a geometric rate and $AC_nA^\transpose - C_n$.

As previously, this is approximated by the linear system
\begin{equation}
\begin{aligned}
\ddt \clX(t)  =&(1+e^{-t})[2\clX(t) + \clX(t) A^\transpose + A \clX(t)] + e^{-t}[A\clX(t)A^\transpose -\clX(t)] \\
&  \qquad -[I+A] \Cov_\pi(\haDelta_{n+2}^m,\Delta_{n+2}^m)) 
\end{aligned}
\label{e:Sigma-ODE-cov}
\end{equation}
With the same argument used in (i), $\{nC_n + nC_n^\transpose\}$ converges to $\Sigma_{\sharp}^{(2)}$ in \eqref{e:SigLya-c} at rate $1/n^\delta$ for $\delta=\delta(A+I)>0$. Therefore, $nC_n + nC_n^\transpose = \Sigma_{\sharp}^{(2)} + O(n^{-\delta})$ and $ R_n^{(2,1),(1)}= n^{-2}C_n = n^{-2}\Sigma_{\infty,C} + O(n^{-2-\delta})$.
\qed

\paragraph*{(iii)}

The third claim in \Proposition{t:fine-comp} is established through a sequence of lemmas. Start with the representation of $\tiltheta_{n+1}^{(3)}$ based on \eqref{e:DoubleFish-2}:
\[
\tiltheta_{n+1}^{(3)} = -\frac{1}{n+1}Z_{n+2} = -\frac{1}{n+1}\haDelta_{n+3}^m + \frac{1}{n+1}(\haZ_{n+3}-\haZ_{n+2})
\]
Since $\haDelta_{n+3}^m$ is uncorrelated with the sequence $\{\tiltheta_k^{(1)}\}$ for $k\le n+1$, we have
\begin{equation}
\Expect[\tiltheta_{n+1}^{(1)}(\haDelta_{n+3}^m)^\transpose]= 0
\label{e:cov-theta-mart}
\end{equation}
Hence it suffices to consider the correlation between $\tiltheta_{n+1}^{(1)}$ and $\haZ_{n+3}- \haZ_{n+2}$. The formula for $\tiltheta_{n+1}^{(1)}$ for $n\ge 1$ is
\begin{equation}
\tiltheta_{n+1}^{(1)} = \prod_{k=1}^{n+1}[I+\alpha_{k}A] \tiltheta_0 + \sum_{k=1}^{n+1}\alpha_k\prod_{l=k+1}^{n+1}[I+\alpha_lA] \Delta_{k+1}^m
\label{e:linear-solution-mart}
\end{equation}
$ \tiltheta_0\Expect[\haZ_{n+3}^\transpose-\haZ_{n+2}^\transpose]$ converges to zero geometrically fast under $V$-uniform ergodicity  of $\bfPhi$. Then we consider the expectation of the following:
\begin{equation}
\begin{aligned}
& \sum_{k=1}^{n+1} \alpha_k \prod_{l=k+1}^{n+1}[I+\alpha_lA]\Delta_{k+1}^m[\haZ_{n+3}^\transpose -\haZ_{n+2}^\transpose ]  \\
=& \sum_{k=1}^{n+1} \alpha_k \prod_{l=k+1}^{n+1}[I+\alpha_lA]  \bigl[\Delta_{k+2}^m\haZ_{n+3}^\transpose -  \Delta_{k+1}^m\haZ_{n+2}^\transpose \bigr] + \sum_{k=1}^{n+1}\alpha_k \prod_{l=k+1}^{n+1}[I+\alpha_lA] \bigl[\Delta_{k+1}^m - \Delta_{k+2}^m \bigr]\haZ_{n+3}^\transpose
\end{aligned}
\label{e:cov-mart-Z}
\end{equation}

The definition of $T$ is now based on the assumption that  $I+A$ is Hurwitz:
 $T > 0$ is the unique solution to the Lyapunov equation:
\[
[A+I]T + T[A+I]^\transpose +I = 0
\]
As previously, we denote $\|W\|_T^2 =  \Expect[W^\transpose T W]$ for a random vector $W$,  and denote by $\|M\|_T$ the induced operator norm of a matrix $M\in \Re^{d\times d}$. In the following result the vector $W$ is taken to be deterministic. 
\spm{watch for $\|\cdot\|_T$.}
\begin{lemma}
\label{t:prod-matrix}
Suppose the matrix $I+A$ is Hurwitz. Then there exists constant $K$ such that the following holds for any $k\ge 1$ and all $n \ge k$
\[
\bigl\| \prod_{l=k+1}^{n+1} [I+\alpha_l A] \bigr\|_T \leq K\frac{k}{n+2}
\]
\end{lemma}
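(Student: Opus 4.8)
The plan is to bound the operator norm of the product by the product of the operator norms, and to control each factor $\|[I+\alpha_l A]\|_T$ through the Lyapunov equation exactly as in the proof of \Lemma{t:converge-sa}. Since $T>0$ solves $[A+I]T+T[A+I]^\transpose+I=0$, the same algebra that produced \eqref{e:sq-simp-lya-mart} (now with the contraction parameter equal to $1$) gives, for any vector $w$,
\[
\|[I+\alpha_l A]w\|_T^2 = w^\transpose\bigl[T-2\alpha_l T-\alpha_l I+\alpha_l^2 ATA^\transpose\bigr]w \le \bigl[1-2\alpha_l-\tfrac{1}{\lambda_\circ}\alpha_l+L^2\alpha_l^2\bigr]\|w\|_T^2,
\]
where $\lambda_\circ>0$ is the largest eigenvalue of $T$ and $L$ is the induced $T$-operator norm of $A$; here I use $-I\preceq-\lambda_\circ^{-1}T$. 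Taking square roots and applying $\sqrt{1+x}\le 1+\half x$ yields, for all $l$ beyond some fixed $l_0$,
\[
\|[I+\alpha_l A]\|_T \le 1-\varrho_0\alpha_l+\tfrac12 L^2\alpha_l^2, \qquad \varrho_0 \eqdef 1+\frac{1}{2\lambda_\circ}>1.
\]

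The crucial feature is that the per-step contraction coefficient $\varrho_0$ is \emph{strictly larger than} $1$; this is precisely where the stronger hypothesis that $I+A$ (rather than $\half I+A$) is Hurwitz enters, through the factor $2T$ in the Lyapunov equation. With this in hand I would invoke submultiplicativity of the induced norm together with \Lemma{t:prod-n}. When $k\ge l_0$ every factor in the product is of the contracting type, so
\[
\Bigl\|\prod_{l=k+1}^{n+1}[I+\alpha_l A]\Bigr\|_T \le \prod_{l=k+1}^{n+1}\|[I+\alpha_l A]\|_T \le K_{\ref{t:prod-n}}\frac{(k+1)^{\varrho_0}}{(n+2)^{\varrho_0}}.
\]
Since $k\le n$ forces $(k+1)/(n+2)\le 1$ and $\varrho_0>1$, one has $\bigl((k+1)/(n+2)\bigr)^{\varrho_0}\le (k+1)/(n+2)\le 2k/(n+2)$, which is the asserted bound.

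For the finitely many small indices $k<l_0$ I would split the product at $l_0$: the head $\prod_{l=k+1}^{l_0}$ is a product of at most $l_0$ factors, each bounded by the uniform constant $c_0\eqdef\sup_{l\ge1}\|[I+\alpha_l A]\|_T<\infty$, while the tail $\prod_{l=l_0+1}^{n+1}$ is estimated by \Lemma{t:prod-n} as above; bounding $(n+2)^{-\varrho_0}\le(n+2)^{-1}\le k\,(n+2)^{-1}$ (valid since $k\ge1$ and $\varrho_0>1$) absorbs everything into a single constant $K$. The degenerate case $n<l_0$, where the whole product is a bounded finite product, is handled the same way since there $(n+2)^{-1}\le k\,(n+2)^{-1}$ with $n+2$ bounded.

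The main obstacle is not the final bookkeeping but securing the strict inequality $\varrho_0>1$ in the per-step estimate: the target rate $k/(n+2)$ is linear, so the contraction coefficient against $\alpha_l=1/l$ must be at least $1$, and anything weaker would produce only a decay $(k/n)^{\varrho_0}$ with $\varrho_0<1$. The Lyapunov equation for $I+A$ supplies exactly this margin, after which the two regimes $k\ge l_0$ and $k<l_0$ are routine.
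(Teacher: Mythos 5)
Your proof is correct and follows the paper's route exactly: submultiplicativity of the induced $T$-norm, a per-factor estimate obtained from the Lyapunov equation for $I+A$, a square-root step via $\sqrt{1+x}\le 1+\half x$, and an appeal to \Lemma{t:prod-n}. The one deviation is that the paper simply discards the $-\alpha_l I$ term, obtaining $\|I+\alpha_l A\|_T \le 1-\alpha_l+\half L^2\alpha_l^2$ and applying \Lemma{t:prod-n} with $\varrho_0=1$, which already yields the linear rate $k/(n+2)$ --- so the strict margin $\varrho_0=1+\tfrac{1}{2\lambda_\circ}>1$ that you describe as the crux (and the attendant $l_0$-splitting for small indices) is harmless but unnecessary.
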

\begin{proof}
For any vector $W\in\Re^d$ and $l\geq 1$, we have
\[
\begin{aligned}
\|[I+\alpha_lA]W\|_T^2 &= W^\transpose[T-2\alpha_l T -\alpha_l I +\alpha_l^2A^\transpose T A]W  \\
&\leq  W^\transpose[TT-2\alpha_l T +\alpha_l^2A^\transpose T A]W \\
&\leq (1-2\alpha_l + \alpha_l^2 L^2) \|W\|_T^2
\end{aligned}
\]
where $L=\| A\|_T$.   \spm{please check -- previously the statement was $L=\| A\|_T$.  I think it should be squared, and the error may appear elsewhere.}
Hence
\[
\|I+\alpha_l A\|_T \leq \sqrt{1-2\alpha_l + \alpha_l^2 L^2} \leq 1-\alpha_l + \half \alpha_l^2 L^2
\]
\Lemma{t:prod-n} completes the proof: 
\[
\begin{aligned}
\bigl\| \prod_{l=k+1}^{n+1} [I+\alpha_l A] \bigr\|_T 
\le 
\prod_{l=k+1}^{n+1} \|[I+\alpha_l A]\|_T \leq \prod_{l=k+1}^{n+1}[1-\alpha_l + \half L^2\alpha_l^2] \leq K_{\ref{t:prod-n}} \frac{k}{n+2}
\end{aligned}
\]
%Since linear operator norm for square matrices is sub-multiplicative, the proof is then completed.
\end{proof}

To analyze $\Expect[\Delta_{k+2}^m\haZ_{n+3}^\transpose] $, consider the bivariate Markov chain $\Phi_n^*=(\Phi_n, \Phi_{n+1})$,  $n\ge 0$, with state space $\zstate^*=\zstate\times\zstate$. 
An associated \textit{weighting function} $V^*:\zstate\times \zstate \rightarrow [1,\infty)$ is defined as $V^*(z,z') = V(z) + V(z')$.

Denote function $h^{k+1,n+2}:\zstate^* \rightarrow \Re^{d\times d}$ as $h^{k+1,n+2}(z', z'')=(\haf(z'') - \Expect[\haf(\Phi_{k+1})\mid \Phi_k =z'])\Expect[\hahaf(\Phi_{n+2})^\transpose \mid \Phi_{k+1}=z'']$ and $h_{i,j}^{k+1,n+2}:\zstate^* \rightarrow\Re$ as the $(i,j)$-th entry of $h^{k+1,n+2}$ for $1\le i,j \le d$. Note that $h^{k+1,n+2}(\Phi_k, \Phi_{k+1})=\Expect[\Delta_{k+1}^m \haZ_{n+2}\mid \clF_{k+1}]$

\begin{lemma}
\label{t:bivariate-chain}
Suppose Assumptions (A1) and (A3) hold. For each $1\le i,j \le d$, 
\begin{romannum}
\item $h_{i,j}^{k+1,n+2} \in \LVstar$, moreover there exists constant $B$ such that
\[
\|h_{i,j}^{k+1,n+2}\|_{V^*} \leq B\|\haf_i\|_{\sqrtV} \|\hahaf_j\|_{\sqrtV} \rho^{n-k+1}
\]
\item Consequently, there exists constant $B'$ such that
\[
\bigl|\Expect[h_{i,j}^{k+1,n+2}(\Phi_k, \Phi_{k+1})\mid \Phi_0=z] - \pi\bigl(h_{i,j}^{k+1,n+2}\bigr)\bigr| \leq B'\|\haf_i\|_{\sqrtV} \|\hahaf_j\|_{\sqrtV} V(z) \rho^{n+1}
\]
\end{romannum}
\end{lemma}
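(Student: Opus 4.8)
The plan is to reduce both parts to the univariate $V$- and $\sqrtV$-uniform ergodicity already established, by exploiting time-homogeneity to write $h_{i,j}^{k+1,n+2}$ as a product of two factors living on different time horizons. Writing $Pg(z)=\Expect[g(\Phi_1)\mid\Phi_0=z]$ for the one-step kernel, time-homogeneity gives $\Expect[\haf_i(\Phi_{k+1})\mid\Phi_k=z'] = (P\haf_i)(z')$ and $\Expect[\hahaf_j(\Phi_{n+2})\mid\Phi_{k+1}=z''] = (P^{\,n-k+1}\hahaf_j)(z'')$, so that
\[
h_{i,j}^{k+1,n+2}(z',z'') = \bigl(\haf_i(z'') - (P\haf_i)(z')\bigr)\,(P^{\,n-k+1}\hahaf_j)(z'').
\]
Both $\haf_i$ and $\hahaf_j$ lie in $L_\infty^{\sqrtV}$: this holds for $\haf_i$ by \Theorem{t:Vuni}~(ii), and for $\hahaf_j$ by the same argument applied to the second Poisson equation, using that $\bfPhi$ is $\sqrtV$-uniformly ergodic. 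Throughout I fix $\rho\in(0,1)$ and $B_0<\infty$ to serve as a common ergodicity rate and constant for the $V$- and $\sqrtV$-norms (enlarging $\rho$ if necessary), and use $\pi(\haf_i)=\pi(\hahaf_j)=0$.

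For (i) I bound the two factors separately. Since $\pi(\haf_i)=0$, one step of $\sqrtV$-uniform ergodicity gives $|(P\haf_i)(z')|\le B_0\|\haf_i\|_{\sqrtV}\sqrt{V(z')}$, so the first factor is at most $\|\haf_i\|_{\sqrtV}\bigl(\sqrt{V(z'')}+B_0\sqrt{V(z')}\bigr)$. Since $\pi(\hahaf_j)=0$, applying $\sqrtV$-uniform ergodicity over $n-k+1$ steps bounds the second factor by $B_0\|\hahaf_j\|_{\sqrtV}\rho^{\,n-k+1}\sqrt{V(z'')}$, which is the sole source of the geometric factor. Multiplying the two bounds and invoking the elementary inequality $\bigl(\sqrt{V(z'')}+\sqrt{V(z')}\bigr)\sqrt{V(z'')}\le \tfrac32\bigl(V(z')+V(z'')\bigr)=\tfrac32 V^*(z',z'')$ (AM--GM on the cross term) yields $\|h_{i,j}^{k+1,n+2}\|_{V^*}\le B\|\haf_i\|_{\sqrtV}\|\hahaf_j\|_{\sqrtV}\rho^{\,n-k+1}$ for a suitable $B$.

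For (ii) I reduce the bivariate conditional expectation to a univariate one. Conditioning on $\Phi_k$,
\[
\Expect[h_{i,j}^{k+1,n+2}(\Phi_k,\Phi_{k+1})\mid\Phi_0=z] = \Expect[(Ph_{i,j}^{k+1,n+2})(\Phi_k)\mid\Phi_0=z],
\]
where $(Ph)(w)=\int h(w,w')\,P(w,dw')$, and by Fubini $\pi(Ph_{i,j}^{k+1,n+2})$ equals the bivariate stationary mean $\pi(h_{i,j}^{k+1,n+2})$. The drift bound $PV\le CV$ — a consequence of $V$-uniform ergodicity applied to $g=V$, together with $\pi(V)<\infty$ and $V\ge 1$ — shows $Ph\in\LV$ with $\|Ph\|_V\le(1+C)\|h\|_{V^*}$, since $|(Ph)(w)|\le\|h\|_{V^*}(V(w)+PV(w))$. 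Applying $V$-uniform ergodicity over $k$ steps then produces a factor $\rho^{\,k}V(z)$, and combining with the bound from (i) gives the product $\rho^{\,n-k+1}\cdot\rho^{\,k}=\rho^{\,n+1}$, which is exactly the asserted estimate with $B'=(1+C)B_0 B$.

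The main obstacle is the exponent bookkeeping: the clean rate $\rho^{n+1}$ in (ii), independent of $k$, emerges only because the $\rho^{\,n-k+1}$ from the $\sqrtV$-mixing of the second factor in (i) combines with the $\rho^{\,k}$ from the $V$-mixing of the $k$-step conditional expectation, and this cancellation requires a single common ergodicity rate for all norms involved. Reducing to the univariate kernel $P$ — rather than invoking $V^*$-uniform ergodicity of the pair chain as a black box — is what forces the two geometric factors to share the same base $\rho$. The remaining ingredients (the $\sqrtV$-membership of $\haf_i$ and $\hahaf_j$, the AM--GM passage from $\sqrtV$-products to $V^*$, and the drift bound $PV\le CV$) are routine.
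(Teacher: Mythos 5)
Your proposal is correct and takes essentially the same route as the paper's proof: you bound the two factors of $h_{i,j}^{k+1,n+2}$ using $\sqrt{V}$-uniform ergodicity of the chain (with $\haf_i,\hahaf_j$ in $L_\infty^{\sqrt{V}}$) to extract the $\rho^{n+1-k}$ factor in (i), and in (ii) you integrate out the second coordinate --- your $Ph$ is exactly the paper's $g_{i,j}^{k,n+2}$ --- bound its $V$-norm by $\|h\|_{V^*}$ times a bound on $V+PV$, and apply $V$-uniform ergodicity over $k$ steps so that $\rho^{n+1-k}\cdot\rho^{k}=\rho^{n+1}$. The only cosmetic differences are that the paper derives $V+PV\le(2+|b|)V$ from the drift-condition characterization of $V$-uniform ergodicity rather than your direct application of \eqref{e:Vuni} with $g=V$, and in (i) it splits the $V^*$-supremum using $V(z')+V(z'')\ge\sqrt{V(z')V(z'')}$ where you use AM--GM on the product of factors.
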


\begin{proof}
By the definition of $V^*$-norm,
\[
\begin{aligned}
\|h_{i,j}^{k+1,n+2}\|_{V^*} 
										= &  \sup_{z',z'' \in \zstate} \frac{\bigl|\bigl[\haf_i(z'') + \Expect[\haf_i(\Phi_{k+1})\mid \Phi_k = z']\bigr] \Expect[\hahaf_j(\Phi_{n+2})\mid \Phi_{k+1}=z'']\bigr|}{V(z') + V(z'')}    \\
										\leq & \sup_{z'' \in \zstate} \frac{\bigl|\haf_i(z'') \Expect[\hahaf_j(\Phi_{n+2}) \mid \Phi_{k+1}=z'']\bigr|}{V(z'')} \\
										&+ \sup_{z',z''\in\zstate} \frac{\bigl| \Expect[\haf_i(\Phi_{k+1})\mid \Phi_k = z']  \Expect[\hahaf_j(\Phi_{n+2}) \mid \Phi_{k+1}=z''] \bigr|}{V(z') + V(z'')}
\end{aligned}
\]
Given $\hahaf_j^2\in\LV$ and the $\sqrt{V}$-uniform ergodicity of $\bfPhi$~\cite[Lemma 15.2.9]{MT},
 there exists constant $B_{\sqrtV}< \infty$ such that
\[
\bigl| \Expect[\hahaf_j(\Phi_{n+2})\mid \Phi_{k+1}=z'']\bigr| \leq B_{\sqrtV} \|\hahaf_j\|_{\sqrtV} \sqrt{V(z'')} \rho^{n+1-k}
\]
Consequently,
\begin{equation}
\sup_{z'' \in \zstate} \frac{|\haf_i(z'') \bigl[ \Expect[\hahaf_j(\Phi_{n+2})\mid \Phi_{k+1}=z'']|}{V(z'')} \leq \|\haf_i\|_{\sqrtV}B_{\sqrtV} \|\hahaf_j\|_{\sqrtV} \rho^{n+1-k}
\label{e:Vstar-1}
\end{equation}
By the inequality $V(z') +V(z'') \geq \sqrt{V(z')V(z'')}$ and the $\sqrt{V}$-uniform ergodicity of $\bfPhi$ once more, we have
\begin{equation}
\begin{aligned}
&\sup_{z',z''\in\zstate} \frac{\bigl| \Expect[\haf_i(\Phi_{k+1})\mid \Phi_k = z']  \Expect[\hahaf_j(\Phi_{n+2})\mid \Phi_{k+1}=z''] \bigr|}{V(z') + V(z'')} \\
\leq & \sup_{z'\in\zstate}\frac{\bigl| \Expect[\haf_i(\Phi_{k+1})\mid \Phi_k = z'] \bigr|}{\sqrt{V(z')}}\sup_{z''\in\zstate}\frac{\bigl| \Expect[\hahaf_j(\Phi_{n+2})\mid \Phi_{k+1}=z''] \bigr|}{\sqrt{V(z'')}} \leq B_{\sqrtV}^2\|\haf_i\|_{\sqrtV}\|\hahaf_j\|_{\sqrtV} \rho^{n+2-k}
\end{aligned}
\label{e:Vstar-2}
\end{equation}
Combining \eqref{e:Vstar-1} and \eqref{e:Vstar-2} gives
\begin{equation}
\|h_{i,j}^{k+1,n+2}\|_{V^*} \leq B\|\haf_i\|_{\sqrtV}\|\hahaf_j\|_{\sqrtV} \rho^{n+1-k}
\label{e:Vuni-W}
\end{equation}
with $B=B_{\sqrtV}+B_{\sqrtV}^2$. 

For (ii), denote $g_{i,j}^{k,n+2}:\zstate\rightarrow\Re$ by the conditional expectation:
\[
g_{i,j}^{k,n+2}(z)= \Expect[h_{i,j}^{k+1,n+2}(\Phi_k, \Phi_{k+1})\mid \Phi_k=z]
\]
This is bounded by a constant times $V^*$:
\[ 
\begin{aligned}
|g_{i,j}^{k,n+2}(z)| = \Bigl| \int h_{i,j}^{k+1,n+2}(z,z')P(z,dz') \Bigr| & \leq \Bigl| \int \frac{h_{i,j}^{k+1,n+2}(z,z')}{V^*(z,z')}V^*(z,z')P(z,dz') \Bigr| \\
																										& \leq \|h_{i,j}^{k+1,n+2}\|_{V^*} [V(z) + PV(z)]
\end{aligned}
\]
 $V$-uniform ergodicity of $\bfPhi$   is equivalent to the following drift condition~\cite[Theorem 16.0.2]{MT}: 
 for some $\beta>0, b < \infty$, and some ``petite set'' $C$,
\[
PV(z) - V(z) \leq -\beta V(z) + b\ind_C(z)\,, \qquad z\in \zstate
\]
Consequently, 
\[
 [V(z) + PV(z)]\leq [2V(z) +b] \leq [2+|b|]V(z)
\]
Therefore,
\begin{equation}
\|g_{i,j}^{k,n+2}\|_V \leq [2+|b|]\|h_{i,j}^{k+1,n+2}\|_{V^*} \leq [2+|b|]B\|\haf_i\|_{\sqrtV}\|\hahaf_j\|_{\sqrtV} \rho^{n+1-k}
\label{e:V-norm-g}
\end{equation}
Thus $g_{i,j}^{k,n+2}\in \LV$. By $V$-uniform ergodicity of $\bfPhi$ again, 
\[
\begin{aligned}
\bigl|\Expect[g_{i,j}^{k,n+2}(\Phi_k)\mid \Phi_0 =z] -\pi\bigl(g_{i,j}^{k,n+2}\bigr) \bigr| 	&\leq B_V\|g_{i,j}^{k,n+2}\|_{V}V(z)\rho^k \\
																										&\leq B'\|\haf_i\|_{\sqrtV}\|\hahaf_j\|_{\sqrtV} V(z) \rho^{n+1}
\end{aligned}
\]
with $B'= [2+|b|]B_VB$. The proof is then completed by applying the smoothing property of conditional expectation.
\end{proof}

\begin{lemma}
\label{t:V-geo-secondPoi}
Under Assumptions (A1) and (A3), there exists   $K<\infty$ such that the following hold
\begin{subequations}%
\begin{align}
\bigl\| \Expect[\Delta_{k+1}^m \haZ_{n+3}^\transpose] \bigr\|_T 
	&\leq K \rho^{n+1-k}   \label{e:V-geo-secondPoi-1}\\
\bigl\| \Expect[\Delta_{k+1}^m \haZ_{n+2}^\transpose] - \Expect[\Delta_{k+2}^m \haZ_{n+3}^\transpose] \bigr\|_T
	&\leq K(1+\rho)\rho^{n+1}
\label{e:V-geo-secondPoi-2}
\end{align}%
\label{e:V-geo-secondPoi}%
\end{subequations}%
\end{lemma}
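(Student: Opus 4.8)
The plan is to reduce both estimates in \eqref{e:V-geo-secondPoi} to \Lemma{t:bivariate-chain} by conditioning on the $\sigma$-field generated up to the time of the martingale increment. Recall the identity $\Expect[\Delta_{k+1}^m\haZ_{m}^\transpose\mid\clF_{k+1}] = h^{k+1,m}(\Phi_k,\Phi_{k+1})$ (stated just before the lemma, valid for any $m\ge k+1$); by the tower property each matrix correlation in \eqref{e:V-geo-secondPoi} is the initial-condition expectation of one of the bivariate-chain functions $h^{k+1,\cdot}$. Since these are $d\times d$ matrices and all matrix norms on $\Re^{d\times d}$ are equivalent, it suffices to bound each entry $\Expect[h_{i,j}^{\cdot,\cdot}(\Phi_k,\Phi_{k+1})]$ and then absorb the equivalence constant, together with the finitely many factors $\|\haf_i\|_{\sqrtV}$ and $\|\hahaf_j\|_{\sqrtV}$, into the constant $K$.

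For \eqref{e:V-geo-secondPoi-1} I would apply \Lemma{t:bivariate-chain}~(i) to $h^{k+1,n+3}$ (the lemma with $n$ replaced by $n+1$), which gives the pointwise estimate $|h_{i,j}^{k+1,n+3}(z',z'')| \le B\|\haf_i\|_{\sqrtV}\|\hahaf_j\|_{\sqrtV}\,\rho^{\,n-k+2}\,V^*(z',z'')$. Taking expectations over $(\Phi_k,\Phi_{k+1})$ started from the fixed $\Phi_0$ reduces everything to a bound on $\Expect[V^*(\Phi_k,\Phi_{k+1})]=\Expect[V(\Phi_k)]+\Expect[V(\Phi_{k+1})]$ that is uniform in $k$. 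This is supplied directly by $V$-uniform ergodicity \eqref{e:Vuni} with $g=V$ (so $\|V\|_V=1$): $\Expect[V(\Phi_k)\mid\Phi_0=z]\le \pi(V)+B_V V(z)$ for every $k$. Since $\rho^{\,n-k+2}\le\rho^{\,n+1-k}$, this yields \eqref{e:V-geo-secondPoi-1}.

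For \eqref{e:V-geo-secondPoi-2} the crucial observation is that $h^{k+1,n+2}$ and $h^{k+2,n+3}$ are in fact the \emph{same} function on $\zstate^*$: rewriting the conditional expectations in the definition as transition-kernel actions gives $h^{k+1,n+2}(z',z'') = \bigl(\haf(z'')-(P\haf)(z')\bigr)(P^{\,n-k+1}\hahaf^\transpose)(z'')$, which depends on $(k,n)$ only through the lag $n-k+1$, and the identical expression arises for $h^{k+2,n+3}$. Hence $\pi(h^{k+1,n+2})=\pi(h^{k+2,n+3})$, so these steady-state terms cancel upon subtraction:
\[
\begin{aligned}
&\Expect[\Delta_{k+1}^m\haZ_{n+2}^\transpose] - \Expect[\Delta_{k+2}^m\haZ_{n+3}^\transpose] \\
&\quad= \bigl(\Expect[h^{k+1,n+2}(\Phi_k,\Phi_{k+1})] - \pi(h^{k+1,n+2})\bigr) - \bigl(\Expect[h^{k+2,n+3}(\Phi_{k+1},\Phi_{k+2})] - \pi(h^{k+2,n+3})\bigr).
\end{aligned}
\]
Then \Lemma{t:bivariate-chain}~(ii) bounds each entry of the first bracket by a constant times $\rho^{n+1}$, and the same lemma with indices shifted to $(k+1,n+1)$ — so the transient to the conditioning time $\Phi_{k+1}$ costs one extra step — bounds the second bracket by a constant times $\rho^{n+2}$. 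Adding the two and converting to $\|\cdot\|_T$ produces the claimed factor $K(1+\rho)\rho^{n+1}$.

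The only genuinely delicate point is the shift-invariance identity $\pi(h^{k+1,n+2})=\pi(h^{k+2,n+3})$: it is exactly what forces the $O(1)$ steady-state correlations to cancel, leaving only the transient remainders whose differing decay rates ($\rho^{n+1}$ versus $\rho^{n+2}$) generate the $1+\rho$. Everything else is bookkeeping — matching the index shifts when invoking \Lemma{t:bivariate-chain} and replacing the equivalent finite-dimensional matrix norms.
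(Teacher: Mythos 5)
Your proposal is correct, and it splits into two parts relative to the paper. For \eqref{e:V-geo-secondPoi-2} you follow essentially the paper's own route: the tower property gives $\Expect[\Delta_{k+1}^m \haZ_{n+2}^\transpose] = \Expect[h^{k+1,n+2}(\Phi_k,\Phi_{k+1})]$, and \Lemma{t:bivariate-chain}~(ii) yields the $\rho^{n+1}$ and $\rho^{n+2}$ bounds whose sum produces $K(1+\rho)\rho^{n+1}$. The one place you go beyond the paper is in stating and proving the shift-invariance $\pi(h^{k+1,n+2})=\pi(h^{k+2,n+3})$ (both functions equal $(\haf(z'')-(P\haf)(z'))(P^{\,n+1-k}\hahaf^\transpose)(z'')$, depending only on the lag): the paper's final triangle inequality silently relies on this cancellation of the $O(1)$ steady-state terms, so your explicit verification closes a small gap in the write-up rather than deviating from it. For \eqref{e:V-geo-secondPoi-1} your route is genuinely different: the paper decomposes $\Delta_{k+1}^m = Z_{k+1}-\Expect[Z_{k+1}\mid\clF_k]$, applies the triangle inequality, and cites the $V$-geometric mixing theorem \cite[Theorem 16.1.5]{MT} directly, whereas you reuse \Lemma{t:bivariate-chain}~(i) pointwise and integrate against $\Expect[V^*(\Phi_k,\Phi_{k+1})]$, which is uniformly bounded in $k$ via \eqref{e:Vuni} with $g=V$ (exactly the device the paper itself uses in the proof of \Lemma{t:var-bound}). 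Your version is more self-contained, since it needs no external mixing citation, and it has the incidental merit of proving the bound for $\haZ_{n+3}$ as actually stated in \eqref{e:V-geo-secondPoi-1}, where the paper's displayed argument bounds $\Expect[\Delta_{k+1}^m\haZ_{n+2}^\transpose]$ (a harmless index slip, but yours avoids it); the paper's route is shorter because the mixing theorem handles both terms of the decomposition at one stroke. In both approaches the remaining steps --- finiteness of $\|\haf_i\|_{\sqrtV}\|\hahaf_j\|_{\sqrtV}$, equivalence of finite-dimensional matrix norms, and absorbing the fixed-initial-condition factor $V(z)$ into $K$ --- are the same bookkeeping.
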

\begin{proof}
By the triangle inequality,
\[
 \bigl\|\Expect[\Delta_{k+1}^m \haZ_{n+2}^\transpose] \bigr\|_T \leq \bigl\|\Expect[Z_{k+1}\haZ_{n+2}^\transpose] \bigr\|_T + \bigl\|\Expect\bigl[\Expect[Z_{k+1}|\clF_{k}] \haZ_{n+2}^\transpose\bigr] \bigr\|_T
\]
where both terms admit the geometric bound in \eqref{e:V-geo-secondPoi-1} following directly from the $V$-geometric mixing of $\bfPhi$ ~\cite[Theorem 16.1.5]{MT}. 

For \eqref{e:V-geo-secondPoi-2}, first notice that
\[
\Expect[\Delta_{k+1}^m \haZ_{n+2}^\transpose] =\Expect\bigl[\Expect[\Delta_{k+1}^m \haZ_{n+2}^\transpose \mid \clF_{k+1}]\bigr] = \Expect[h^{k+1,n+2}(\Phi_{k}, \Phi_{k+1})]
\]
With \Lemma{t:bivariate-chain}, we have for each $(i,j)$-th entry,
\[
\Bigl|\Expect[h_{i,j}^{k+1,n+2}(\Phi_k, \Phi_{k+1})\mid \Phi_0 =z] - \pi\bigl(h_{i,j}^{k+1,n+2}\bigr)\Bigr| 
 \leq B'\|\haf_i\|_{\sqrtV}\|\hahaf_j\|_{\sqrtV} V(z) \rho^{n+1}
\]
With fixed initial condition $\Phi_0=z$, by equivalence of matrix norms, there exists a constant $K$ such that
\[
\Bigl\| \Expect[h^{k+1,n+2}(\Phi_{k}, \Phi_{k+1})] -\pi\bigl(h_{i,j}^{k+1,n+2}\bigr) \Bigr\|_T \leq K\rho^{n+1}
\]
\eqref{e:V-geo-secondPoi-2} then follows from the triangle inequality:
\[
\bigl\| \Expect[\Delta_{k+1}^m \haZ_{n+2}^\transpose] - \Expect[\Delta_{k+2}^m \haZ_{n+3}^\transpose] \bigr\|_T \leq K\rho^{n+1} + K\rho^{n+2} = K(1+\rho)\rho^{n+1}
\]
\end{proof}

\begin{lemma}
\label{e:exp-sum}
For fixed $\rho \in (0,1)$, there exists   $K<\infty $ such that for all $n \ge 2$,
\[
\sum_{k=1}^{n-1} \frac{1}{k}\rho^{-k} \leq K\frac{\rho^{-n}}{n}
\]
\end{lemma}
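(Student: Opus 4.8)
The plan is to reduce the claim to the elementary fact that a sum whose terms grow geometrically is comparable to its largest (here, its last) term. Set $r=\rho^{-1}>1$ and $b_k=r^k/k$, so that the assertion reads $\sum_{k=1}^{n-1}b_k\le K\,r^n/n$. The guiding observation is that the ratio of consecutive summands, $b_{k+1}/b_k=r\,k/(k+1)$, tends to $r>1$ as $k\to\infty$; equivalently $b_k/b_{k+1}=r^{-1}(1+1/k)\to r^{-1}<1$, so the tail of the sum is controlled by a fixed multiple of $b_{n-1}$.

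To make this precise I would first fix $k_0$ and $\theta\in(0,1)$ such that $b_k\le\theta\,b_{k+1}$ for every $k\ge k_0$; this is possible because $r^{-1}(1+1/k)$ decreases to $r^{-1}<1$. For $n$ large enough that $n-1\ge k_0$, iterating this backward from the final index gives $b_{n-1-i}\le\theta^i b_{n-1}$, hence
\[
\sum_{k=k_0}^{n-1}b_k\le b_{n-1}\sum_{i\ge0}\theta^i=\frac{b_{n-1}}{1-\theta}.
\]
The last term is then put into the target form: $b_{n-1}=r^{-1}\,r^n/(n-1)\le 2r^{-1}\,(r^n/n)$ for $n\ge2$, using $1/(n-1)\le 2/n$. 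This bounds the whole geometric tail by a constant multiple of $r^n/n$.

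It remains to absorb the finitely many initial terms $\sum_{k=1}^{k_0-1}b_k$, which equal a fixed constant $C_0$ independent of $n$. Here I would use that $r^n/n$ is bounded below by a strictly positive constant $c_0$ over all integers $n\ge2$: the map $n\mapsto r^n/n$ is continuous, strictly positive, and diverges to $\infty$, so it attains a positive minimum on $[2,\infty)$. Thus $C_0\le (C_0/c_0)(r^n/n)$, and adding the two contributions yields the claim with $K=2r^{-1}/(1-\theta)+C_0/c_0$. For the remaining small values of $n$ with $n-1<k_0$, the entire sum is at most $C_0$ and is handled by the same lower bound on $r^n/n$.

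I do not anticipate a genuine obstacle; the content is exactly the comparability of a geometrically increasing sum with its largest term. The only care required is bookkeeping: verifying the ratio estimate that produces $\theta$, and selecting a single constant $K$ that simultaneously dominates the geometric tail, the fixed initial block, and the finitely many small-$n$ cases for every $n\ge2$.
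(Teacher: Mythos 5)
Your proof is correct, and it takes a genuinely different route from the paper's. You argue discretely: since $b_k = \rho^{-k}/k$ has consecutive ratio $b_{k+1}/b_k = \rho^{-1}k/(k+1) \to \rho^{-1} > 1$, beyond a finite index $k_0$ the summands grow at least geometrically, so the tail $\sum_{k=k_0}^{n-1} b_k$ is dominated by $b_{n-1}/(1-\theta)$, i.e.\ by a fixed multiple of its last term; the finite initial block and the small-$n$ cases are absorbed using $\inf_{n\ge 2} \rho^{-n}/n > 0$. The paper instead uses an integral comparison: since $t^{-1}e^{\gamma t}$ (with $\gamma = -\log\rho$) is increasing on $[1,\infty)$, the sum is bounded by $\int_1^n t^{-1}e^{\gamma t}\,dt$, which is then split at the carefully chosen point $t_0 = n-\sqrt{n}$ --- crude on $[1,t_0]$ (drop $t^{-1}\le 1$) and with $t^{-1}\le t_0^{-1}$ on $[t_0,n]$ --- followed by a verification that the resulting correction terms are uniformly bounded. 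Your ratio-test argument is more elementary: it avoids integrals and the ad hoc split point entirely, and it produces a fully explicit constant $K = 2\rho/(1-\theta) + C_0/c_0$ once $\theta \in (\rho,1)$ and $k_0$ are fixed. The paper's Laplace-type integral estimate is the more flexible template --- it would extend to prefactors other than $1/k$ where a clean geometric-ratio threshold is less convenient to locate --- but for this particular lemma your approach is the shorter and more transparent of the two. The only bookkeeping points in your write-up (that the backward iteration only uses indices $\ge k_0$, that $1/(n-1)\le 2/n$ for $n\ge 2$, and that $\rho^{-t}/t$ attains a positive minimum on $[2,\infty)$) are all handled correctly.
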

\begin{proof}
Denote $\gamma = -\log\rho > 0$ and observe that the  function $t^{-1}\exp(\gamma t)$ is increasing over $[1,\infty)$. The following holds for $n\geq 2$
\[
\sum_{k=1}^{n-1} \frac{1}{k}\rho^{-k} = \sum_{k=1}^{n-1} \frac{1}{k}\exp(\gamma k) \leq \int_1^{n} t^{-1}\exp(\gamma t)dt
\]
Now consider the integral: for any $t_0\in(1, n)$,
\[
\begin{aligned}
\int_1^{n} t^{-1}\exp(\gamma t)dt 
												&  \leq \int_1^{t_0} \exp(\gamma t)dt + \int_{t_0}^{n} t_0^{-1}\exp(\gamma t)dt \\
												& \leq \gamma^{-1}\bigl[\exp(\gamma t_0) - \exp(\gamma) + \frac{\exp(\gamma n) - \exp(\gamma t_0)}{t_0}\bigr]
\end{aligned}
\] 
Take $t_0= n - \sqrt{n}$.
\[
\begin{aligned}
\exp(\gamma t_0) - \exp(\gamma) + \frac{\exp(\gamma n) - \exp(\gamma t_0)}{t_0} 
&= \exp(\gamma (n - \sqrt{n})) - \exp(\gamma) + \frac{\exp(\gamma n)- \exp(\gamma(n-\sqrt{n}))}{n-\sqrt{n}} \\
&\leq K' n^{-1}\exp(\gamma n)
\end{aligned}
\]
where $K' = \sup_{t \ge 2} t\exp(-\gamma \sqrt{t}) - t\exp(\gamma-\gamma t) + [1-\exp(-\gamma\sqrt{t})]/[1-1/\sqrt{t}]$. The proof is completed by setting $K=\gamma^{-1} K'$.
\end{proof}

\begin{proof}[Proof of \Proposition{t:fine-comp} (iii)]
Following \eqref{e:cov-theta-mart}, we have
\begin{equation}
R_{n+1}^{(1),(3)}=\Expect[\tiltheta_{n+1}^{(1)}(\tiltheta_{n+1}^{(3)})^\transpose]= \frac{1}{n+1}\Expect[\tiltheta_{n+1}^{(1)}[\haZ_{n+3}- \haZ_{n+2}]^\transpose]
\label{e:corr-13-decomp}
\end{equation}
This is bounded based on \eqref{e:cov-mart-Z}: \Lemma{t:prod-matrix} and \eqref{e:V-geo-secondPoi-2} indicate that there exists some constant $K$ such that
\begin{equation}
\begin{aligned}
 \sum_{k=1}^{n+1} \alpha_k \bigl\|\prod_{l=k+1}^{n+1}[I+\alpha_lA]\bigr\|_T  \bigl\|\Expect\bigl[\Delta_{k+2}^m\haZ_{n+3}^\transpose -  \Delta_{k+1}^m\haZ_{n+2}^\transpose \bigr] \bigr\|_T \leq K \rho^{n+1} 
\end{aligned}
\label{e:corr-cle-z-1}
\end{equation}
For the second term in \eqref{e:cov-mart-Z}, it admits a simpler form
\[
\begin{aligned}
\sum_{k=1}^{n+1} \alpha_k\prod_{l=k+1}^{n+1}[I+\alpha_lA] \bigl[\Delta_{k+1}^m - \Delta_{k+2}^m \bigr]\haZ_{n+3}^\transpose
= & \prod_{l=2}^{n+1}[I+\alpha_lA]\Delta_2^m \haZ_{n+3}^\transpose - \frac{1}{n+1} \Delta_{n+3}^m \haZ_{n+3}^\transpose \\
&- \sum_{k=2}^{n+1}\alpha_{k-1} \alpha_k \prod_{l=k+1}^{n+1}[I + \alpha_l A] [I+A]\Delta_{k+1}^m \haZ_{n+3}^\transpose
\end{aligned}
\]
where $\prod_{l=2}^{n+1}[I+\alpha_lA]\Expect[\Delta_2 \haZ_{n+3}^\transpose] = O(\rho^n)$ and $\Expect[\Delta_{n+3}^m \haZ_{n+3}^\transpose]$ converges to its steady-state mean. For the remaining part, \Lemma{t:prod-matrix} and \eqref{e:V-geo-secondPoi-1} together imply that 
\[
\begin{aligned}
&\Bigl\| \sum_{k=2}^{n+1} \alpha_{k-1} \alpha_k \prod_{l=k+1}^{n+1}[I + \alpha_l A] [I+A]\Expect[\Delta_{k+1}^m \haZ_{n+3}^\transpose] \Bigr\|_T \\
& \leq \sum_{k=2}^{n+1} \alpha_{k-1} \alpha_k \prod_{l=k+1}^{n+1}\| I + \alpha_l A\|_T \|I+A\|_T \|\Expect[\Delta_{k+1}^m \haZ_{n+3}^\transpose] \|_T \\
&\leq \frac{K'}{n+2}\sum_{k=2}^{n+1} \alpha_{k-1} \rho^{n+1-k}
\end{aligned}
\]
for some constant $K'$. By \Lemma{e:exp-sum}, there exists another constant $K''$ such that
\[
 \frac{K'}{n+2}\sum_{k=2}^{n+1} \alpha_{k-1} \rho^{n-k} = \frac{K'\rho^{n}}{n+2}\sum_{k=1}^{n} \alpha_{k} \rho^{-k}  \leq  \frac{K'K''\rho}{(n+1)(n+2)}
\]
This combined with \eqref{e:corr-cle-z-1} shows that 
\[
\Expect[\tiltheta_{n+1}^{(1)}[\haZ_{n+3} -\haZ_{n+2}]^\transpose] = -(n+1)^{-1}\Expect_{\pi}[\Delta_{n}^m\haZ_{n}^\transpose] +O(\rho^{n+1})
\]
Following \eqref{e:corr-13-decomp}, we obtain the desired result:
\[
\Expect[\tiltheta_{n+1}^{(1)}(\tiltheta_{n+1}^{(3)})^\transpose] =  -\frac{1}{(n+1)^2}\Expect_{\pi}[\Delta_{n}^m\haZ_{n}^\transpose]  +O((n+1)^{-3})
\]
\end{proof}

\subsection{Unbounded moments}
\label{s:unbounded}

This section is devoted to the proof that $\lim_{n\rightarrow\infty} \Expect[|v^\transpose\tiltheta_n^\varrho|^2] = \infty$ for $\varrho> \varrho_0$ (see  \Theorem{t:SAlinearized} (ii)). Since it suffices to show the result holds for $\varrho_0 < \varrho < \half$, we assume $\varrho<\half$ throughout. Recall that $\lambda = -\varrho_0 + ui$.

Consider the update of $\tiltheta_n^\varrho$ in \eqref{e:clElin}. With $v^\transpose [\lambda I - A] = 0$, we have $v^\transpose [\varrho_n I + A_n] = v^\transpose[\varrho - \varrho_0 +\epsy_v(n,\varrho) + ui]$. Multiplying each side of \eqref{e:clElin} by $v^\transpose$ gives
\[
\begin{aligned}
v^\transpose\tiltheta_{n+1}^\varrho &= v^\transpose\tiltheta_n^\varrho  + \alpha_{n+1}\bigl[ [\varrho - \varrho_0 + \epsy_v(n, \varrho) + ui]v^\transpose\tiltheta_n^\varrho + (n+1)^\varrho v^\transpose\Delta_{n+1} \bigr] \\
&=[ 1+\alpha_{n+1}\tilde\varrho_{n+1} + \alpha_{n+1}ui]v^\transpose\tiltheta_n^\varrho + (n+1)^{\varrho-1} v^\transpose\Delta_{n+1}
\end{aligned}
\]
with $\tilde\varrho_{n+1} = \varrho - \varrho_0 + \epsy_v(n, \varrho)$. Note that $\tilde\varrho_{n+1}$ is strictly positive for sufficiently large $n$.

For a fixed but arbitrary $n_0$ and each $n \ge n_0$, we have
\begin{equation}
\label{e:clE-iterate}
\begin{aligned}
v^\transpose\tiltheta_{n+1}^\varrho &= v^\transpose\tiltheta_{n_0}^{\varrho} \prod_{k=n_0+1}^{n+1} [1+\alpha_{k}\tilde\varrho_{k} + \alpha_kui]  + \sum_{k=n_0+1}^{n+1} k^{\varrho-1}v^\transpose \Delta_{k} \prod_{l=k+1}^{n+1}[1+\alpha_l\tilde\varrho_l + \alpha_lui] \\
&= \Bigl[\prod_{k=n_0+1}^{n+1} [1+\alpha_{k}\tilde\varrho_{k} + \alpha_kui]\Bigr] \cdot \Bigl[v^\transpose\tiltheta_{n_0}^\varrho + \sum_{k=n_0+1}^{n+1} \frac{k^{\varrho-1} }{\prod_{l =n_0+1}^k [1+\alpha_l\tilde\varrho_l+ \alpha_lui]} v^\transpose \Delta_k  \Bigr] \\
&= \Bigl[\prod_{k=n_0+1}^{n+1} [1+\alpha_{k}\tilde\varrho_{k} + \alpha_kui]\Bigr] \cdot \Bigl[v^\transpose\tiltheta_{n_0}^\varrho + \sum_{k=n_0+1}^{n+1} \beta_k v^\transpose \Delta_k  \Bigr]
\end{aligned}
\end{equation}
with $\beta_n = n^{\varrho-1}/\prod_{l=n_0+1}^n [1+\alpha_l\tilde\varrho_l + \alpha_l ui]$.

The analysis of $\{v^\transpose\tiltheta_n^{\varrho}\}$ is mainly based on the random series appearing in \eqref{e:clE-iterate}, which requires the following three preliminary results: 
\begin{lemma}
	\label{t:beta}
	There exists some $n_0$ such that for each $n> n_0$,
\[
	|\beta_n - \beta_{n+1}|^2 \leq  4|\beta_{n+1}|^2\alpha_n^2 (1 + u^2)
\]
\end{lemma}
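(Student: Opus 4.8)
The plan is to isolate the ratio $\beta_n/\beta_{n+1}$, which telescopes cleanly because the two quantities share the same product in their denominators. Writing $w_{n+1} \eqdef 1 + \alpha_{n+1}\tilde\varrho_{n+1} + \alpha_{n+1} ui$ for the last factor of $\prod_{l=n_0+1}^{n+1}[1+\alpha_l\tilde\varrho_l + \alpha_l ui]$, the definition of $\beta_n$ gives $\beta_n/\beta_{n+1} = (n/(n+1))^{\varrho-1}\,w_{n+1}$, hence
\[
|\beta_n - \beta_{n+1}|^2 = |\beta_{n+1}|^2\,\bigl| (n/(n+1))^{\varrho-1} w_{n+1} - 1 \bigr|^2 .
\]
So it suffices to prove $\bigl|(n/(n+1))^{\varrho-1}w_{n+1} - 1\bigr|^2 \le 4\alpha_n^2(1+u^2)$ for all $n$ beyond some $n_0$.

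Next I would expand the scalar prefactor. Since $\alpha_n = 1/n$, we have $(n/(n+1))^{\varrho-1} = (1+\alpha_n)^{1-\varrho} \eqdef 1 + c$, and because $x \mapsto (1+x)^{1-\varrho}$ is concave on $[0,\infty)$ for $1-\varrho \in (1/2,1)$, its graph lies below the tangent at $0$, yielding $0 \le c \le (1-\varrho)\alpha_n$. Multiplying out, the complex number $(1+c)w_{n+1} - 1$ has real part $X = a + c + ca$ and imaginary part $Y = (1+c)b$, where $a = \alpha_{n+1}\tilde\varrho_{n+1}$ and $b = \alpha_{n+1} u$ are real. The heart of the estimate is then to bound $X$ and $Y$ separately. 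For the imaginary part, $c \to 0$ forces $1+c \le 2$ for large $n$, and $\alpha_{n+1} \le \alpha_n$, so $Y^2 \le 4\alpha_n^2 u^2$. For the real part I would use $\tilde\varrho_{n+1} = \varrho - \varrho_0 + \epsy_v(n,\varrho) \to \varrho - \varrho_0 \in (0,1/2)$, so that $|a| \le (\varrho-\varrho_0 + o(1))\alpha_n$, together with $c \le (1-\varrho)\alpha_n$ and $|ca| = O(\alpha_n^2)$. The leading coefficient of $|X|/\alpha_n$ then tends to $(\varrho-\varrho_0) + (1-\varrho) = 1 - \varrho_0 < 1$, so $|X| \le 2\alpha_n$ for $n$ large, whence $X^2 \le 4\alpha_n^2$. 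Adding the two contributions gives $X^2 + Y^2 \le 4\alpha_n^2(1+u^2)$, which is exactly the claim.

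The only delicate point is bookkeeping rather than depth: the constant $4$ is very generous, since the true leading coefficient is $(1-\varrho_0)^2 + u^2$, strictly below $1+u^2$. Consequently the entire task reduces to checking that the sub-leading $O(\alpha_n^2)$ corrections — coming from the concavity remainder in $c$, the decaying term $\epsy_v(n,\varrho)$, and the cross term $ca$ — are absorbed by taking $n_0$ large enough. Because every such correction is $O(\alpha_n^2)$ while the target is $\Theta(\alpha_n^2)$ with strict slack, a single choice of $n_0$ depending only on $\varrho, \varrho_0, u$ suffices, and no cancellation or finer structure is needed; the step I would be most careful about is simply confirming that the combined real-part coefficient stays below $2$ uniformly for $n > n_0$.
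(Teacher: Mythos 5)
Your proposal is correct and follows essentially the same route as the paper: factor out $|\beta_{n+1}|^2$, write $\beta_n/\beta_{n+1}-1 = (1+n^{-1})^{1-\varrho}\bigl[1+\alpha_{n+1}\tilde\varrho_{n+1}+\alpha_{n+1}ui\bigr]-1$, bound the real and imaginary parts separately by $2\alpha_n$ and $2u\alpha_n$ for $n$ beyond a suitable $n_0$ (using $\epsy_v(n,\varrho)=O(n^{-1})$ and $0<\varrho-\varrho_0<\half$), and sum the squares. The only cosmetic difference is that you control the prefactor via the concavity bound $(1+\alpha_n)^{1-\varrho}\le 1+(1-\varrho)\alpha_n$, where the paper uses the cruder $(1+n^{-1})^{1-\varrho}\le 1+n^{-1}$; both are absorbed into the same final constant.
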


\begin{proof}
	Note that $|\beta_n - \beta_{n+1}|^2 = |\beta_{n+1}|^2| \beta_n /\beta_{n+1} - 1|^2$, so it is sufficient to bound the second factor:
\begin{equation}
	\label{e:beta-diff}
	\begin{aligned}
	| \beta_n /\beta_{n+1} - 1|^2 &=  |(1 + n^{-1})^{1-\varrho}[1+\alpha_{n+1}\tilde\varrho_{n+1} + \alpha_{n+1}ui] - 1|^2 \\
	&= |(1 + n^{-1})^{1-\varrho}[1+\alpha_{n+1}\tilde\varrho_{n+1}] -1 + (1 + n^{-1})^{1-\varrho}\alpha_{n+1}ui|^2
	\end{aligned}
\end{equation}
	Consider the real part in \eqref{e:beta-diff}: since $\epsy_v(n, \varrho) = O(n^{-1})$, there exists $n_0$ such that $|\epsy_v(n, \varrho)| \leq \varrho- \varrho_0$ and $\tilde\varrho_{n+1}=\varrho - \varrho_0 + \epsy_v(n,\varrho) > 0$ for $n \geq n_0$.
	Consequently,
\[
	\begin{aligned}
	0 \leq (1 + n^{-1})^{1-\varrho}[1 + \alpha_{n+1}\tilde\varrho_{n+1}] - 1 &< (1 + n^{-1})[1 + \alpha_{n+1}\tilde\varrho_{n+1}]- 1 \\
	& \leq n^{-1}(1 + \tilde\varrho_{n+1} + \alpha_{n+1}\tilde\varrho_{n+1})
	\end{aligned}
\]
	Given $0<\varrho -\varrho_0< \half$,  we can increase $n_0$ if necessary, such that $1 + \tilde\varrho_{n+1} + \alpha_{n+1}\tilde\varrho_{n+1} \leq 2$ for $n\geq n_0$. Then we have
\[
	(1+n^{-1})^{1-\varrho}[1+\alpha_{n+1}\tilde\varrho_{n+1}] - 1 \leq 2\alpha_n
\]
	For the imaginary part, observe that
\[
	(1+n^{-1})^{1-\varrho}\alpha_{n+1}u = \alpha_n \frac{n^\varrho}{(n+1)^\varrho}u\leq 2u\alpha_n
\]
	The proof is completed by summing the bounds for the real and imaginary parts.
\end{proof}

\begin{lemma}
	\label{t:rs-convergence}
	Suppose Assumptions A1 and A3 hold. With each $n_0\geq 1$, the random series $\sum_{k=n_0+1}^{\infty} \beta_k v^\transpose \Delta_k$ converges a.s..
\end{lemma}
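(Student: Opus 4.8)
The plan is to exploit the martingale-plus-telescope decomposition $\Delta_k = \Delta_{k+1}^m + Z_k - Z_{k+1}$ from \Section{s:notation}, which splits the random series into a martingale handled by an $L^2$ argument and a telescoping series handled by summation by parts. The single estimate that drives everything is the growth rate of the deterministic coefficients $\beta_n$. From $|1+\alpha_l\tilde\varrho_l + \alpha_l ui|^2 = 1 + 2\alpha_l\tilde\varrho_l + O(\alpha_l^2)$ and $\tilde\varrho_l = \varrho - \varrho_0 + O(1/l)$, taking logarithms and using $\sum_{l\le n}\alpha_l = \log n + O(1)$ gives $\prod_{l=n_0+1}^n |1+\alpha_l\tilde\varrho_l+\alpha_l ui| = \Theta(n^{\varrho-\varrho_0})$, and hence $|\beta_n| = \Theta(n^{\varrho_0-1})$. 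This yields the two summability facts used below: $\sum_n |\beta_n|^2 = \Theta\bigl(\sum_n n^{2\varrho_0 - 2}\bigr) < \infty$ (which needs $\varrho_0 < \half$), and, via \Lemma{t:beta}, $\sum_n |\beta_n - \beta_{n+1}| = O\bigl(\sum_n |\beta_{n+1}|\alpha_n\bigr) = O\bigl(\sum_n n^{\varrho_0-2}\bigr) < \infty$.

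For the martingale term I would note that $\{v^\transpose\Delta_{k+1}^m\}$ is a (complex-valued) martingale difference sequence, uniformly bounded in $L^2$ since $\haf_i^2\in\LV$ and $\bfPhi$ is $V$-uniformly ergodic. Thus the partial sums $M_N = \sum_{k=n_0+1}^N \beta_k v^\transpose\Delta_{k+1}^m$ form a martingale, and by orthogonality of the increments,
\[
\Expect[|M_N|^2] = \sum_{k=n_0+1}^N |\beta_k|^2\,\Expect[|v^\transpose\Delta_{k+1}^m|^2] \le C\sum_{k} |\beta_k|^2 < \infty .
\]
Boundedness in $L^2$ gives almost-sure convergence of $\{M_N\}$ by the martingale convergence theorem.

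For the telescoping term $\sum_k \beta_k(v^\transpose Z_k - v^\transpose Z_{k+1})$, summation by parts gives
\[
\sum_{k=n_0+1}^N \beta_k(v^\transpose Z_k - v^\transpose Z_{k+1}) = \beta_{n_0+1}v^\transpose Z_{n_0+1} - \beta_N v^\transpose Z_{N+1} + \sum_{k=n_0+2}^N (\beta_k - \beta_{k-1})v^\transpose Z_k .
\]
The boundary term tends to zero almost surely: since $\sum_N \Expect[|\beta_N v^\transpose Z_{N+1}|^2] = \sum_N |\beta_N|^2\Expect[|v^\transpose Z_{N+1}|^2] < \infty$, the nonnegative summand is almost surely summable and hence $\beta_N v^\transpose Z_{N+1}\to 0$ a.s. The remaining series converges \emph{absolutely} almost surely because its expected total variation is finite: using \Lemma{t:beta} and $\Expect|v^\transpose Z_k|\le (\Expect|v^\transpose Z_k|^2)^{1/2} = O(1)$,
\[
\Expect\sum_k |\beta_k-\beta_{k-1}|\,|v^\transpose Z_k| = \sum_k |\beta_k-\beta_{k-1}|\,\Expect|v^\transpose Z_k| \le C\sum_k |\beta_k - \beta_{k-1}| < \infty .
\]
Adding the martingale and telescoping contributions shows that $\sum_{k} \beta_k v^\transpose\Delta_k$ converges almost surely.

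The main obstacle is the first step: pinning down the sharp rate $|\beta_n| = \Theta(n^{\varrho_0-1})$ and verifying that it is precisely the eigenvalue constraint $\varrho_0 < \half$ (guaranteed by $\varrho_0 < \varrho < \half$) that makes $\sum_n|\beta_n|^2$ summable. Once the coefficient estimates are in hand, both the $L^2$ martingale argument and the summation-by-parts argument are routine, with \Lemma{t:beta} supplying exactly the increment bound $|\beta_k-\beta_{k-1}| = O(|\beta_k|\alpha_{k-1})$ needed to control the telescoping series.
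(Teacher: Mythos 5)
Your proof is correct and follows essentially the same route as the paper's (very terse) argument: the Poisson-equation decomposition $v^\transpose\Delta_k = \Delta_{k+1}^{vm} + Z_k^v - Z_{k+1}^v$, almost-sure convergence of the $L^2$-bounded martingale part using $\{\beta_n\}\in\ell_2$, and absolute convergence of the summation-by-parts remainder via \Lemma{t:beta}. Your only additions are welcome details the paper leaves implicit, namely the rate $|\beta_n|=\Theta(n^{\varrho_0-1})$ (so that $\varrho_0<\half$ gives the $\ell_2$ and $\ell_1$ summability facts) and the almost-sure vanishing of the boundary term $\beta_N v^\transpose Z_{N+1}$.
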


\begin{proof}
	Decompose the series into the sum of a martingale difference and telescoping sequence. The martingale difference sequence converges \textit{almost surely} given $\{\beta_n\}\in\ell_2$; the telescoping series is absolutely convergent by \Lemma{t:beta}.
\end{proof}

\begin{lemma}
	\label{t:var-bound}
	Suppose Assumptions A1 and A3 hold. Denote $Z_n^v =v^\transpose Z_n= v^\transpose\haf(\Phi_n)$. There exists a deterministic constant $K>0$, such that for all $n_0$ and each sequence $\gamma\in\ell_1 \subseteq \ell_2$,
\begin{equation}
	\Expect\bigl[\Var(\sum_{k=n_0+2}^\infty \gamma_{k-n_0-1}Z_k^v\mid \clF_{n_0+1})\bigr] \leq K \sum_{k=1}^\infty |\gamma_k|^2
\end{equation}
\end{lemma}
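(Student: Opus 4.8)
The plan is to expand the expected conditional variance into a double sum of expected conditional covariances of the scalar sequence $Z_n^v = G(\Phi_n)$, $G \eqdef v^\transpose\haf$, control each entry by a geometric factor $\rho^{|j-k|}$ whose prefactor has bounded expectation uniformly in $n_0$, and then collapse the double sum by a convolution (Young) inequality. After reindexing $k = n_0+1+j$, write $S = \sum_{j\ge 1}\gamma_j Z^v_{n_0+1+j}$. Since $\gamma\in\ell_1\subseteq\ell_2$ guarantees a.s. and $L_2$ convergence of $S$ (using the summable correlation structure established below), the conditional variance expands as
\[
\Expect[\Var(S\mid\clF_{n_0+1})] = \sum_{j,k\ge 1}\gamma_j\gamma_k\,\Expect\!\left[\Cov\!\left(Z^v_{n_0+1+j},Z^v_{n_0+1+k}\mid\clF_{n_0+1}\right)\right],
\]
the interchange of summation and expectation being justified by the absolute bound proved next.

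The core step is a geometric bound on the conditional covariance, uniform in the conditioning time. By \Theorem{t:Vuni}~(ii), $G^2\in\LV$, hence $\|G\|_{\sqrtV}<\infty$ and the chain is $\sqrtV$-uniformly ergodic. Because $\haf$ is normalized to zero steady-state mean, $\pi(G)=0$, so for $m\le j\le k$,
\[
\bigl|\Expect[Z_k^v\mid\clF_j]\bigr| = \bigl|P^{k-j}G(\Phi_j)\bigr| \le B_{\sqrtV}\|G\|_{\sqrtV}\,\rho^{k-j}\sqrt{V(\Phi_j)}.
\]
Writing $\Cov(Z_j^v,Z_k^v\mid\clF_m)=\Expect[\bar Z_j\bar Z_k\mid\clF_m]$ with $\bar Z_i = Z_i^v-\Expect[Z_i^v\mid\clF_m]$, conditioning on $\clF_j$ and using the estimate above for $\Expect[\bar Z_k\mid\clF_j]$ together with $|\bar Z_j|\le C\bigl(\sqrt{V(\Phi_j)}+\sqrt{V(\Phi_m)}\bigr)$ and the drift bound $\Expect[V(\Phi_j)\mid\clF_m]\le \pi(V)+B_V V(\Phi_m)$ gives
\[
\bigl|\Cov(Z_j^v,Z_k^v\mid\clF_m)\bigr|\le C\,\rho^{|k-j|}\bigl(1+V(\Phi_m)\bigr).
\]
Taking total expectations and invoking $\sup_n\Expect[V(\Phi_n)]\le \pi(V)+B_V V(\Phi_0)=:\bar V<\infty$ under (A1) yields, with $m=n_0+1$,
\[
\bigl|\Expect[\Cov(Z^v_{n_0+1+j},Z^v_{n_0+1+k}\mid\clF_{n_0+1})]\bigr|\le C(1+\bar V)\,\rho^{|j-k|},
\]
with $C$ and $\bar V$ independent of $n_0$, $j$, $k$, and $\gamma$.

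Substituting this into the expansion and applying the standard convolution inequality
\[
\sum_{j,k\ge 1}\rho^{|j-k|}|\gamma_j||\gamma_k|\le\Bigl(\sum_{m\in\field{Z}}\rho^{|m|}\Bigr)\sum_{j}|\gamma_j|^2=\frac{1+\rho}{1-\rho}\sum_{j}|\gamma_j|^2
\]
completes the proof with $K = C(1+\bar V)\tfrac{1+\rho}{1-\rho}$. The hard part is the uniform-in-$n_0$ covariance bound: one must control \emph{both} conditional means $\Expect[Z_k^v\mid\clF_j]$ and $\Expect[Z_k^v\mid\clF_m]$ and then strip off the random prefactor $V(\Phi_m)$ through a moment estimate that does not degrade as $n_0\to\infty$. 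This is precisely where $\pi(G)=0$, $\sqrtV$-uniform ergodicity, and the uniform bound on $\Expect[V(\Phi_n)]$ are indispensable; without $\pi(G)=0$ the conditional means would relax to a nonzero constant and the decay in $|j-k|$ would fail.
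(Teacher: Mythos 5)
Your proof is correct and is essentially the paper's own argument: both reduce the left-hand side to a quadratic form in $\{Z_k^v\}$ whose kernel has geometric off-diagonal decay with a $V(\Phi_{n_0+1})$ prefactor (obtained from $\sqrt{V}$-uniform ergodicity and $\pi(v^\transpose\haf)=0$, exactly as in the paper), and then integrate out the prefactor via $\Expect[V(\Phi_{n_0+1})\mid \Phi_0=z]\le \pi(V)+B_V V(z)$, yielding a deterministic $K$ uniform in $n_0$. The only differences are presentational: the paper freezes $z'=\Phi_{n_0+1}$ by the Markov property and bounds the largest eigenvalue of the truncated covariance matrix by its maximal absolute row sum (Gershgorin), whereas you keep centered conditional covariances throughout and collapse the double sum with the equivalent Schur/Young convolution estimate $\sum_{j,k}\rho^{|j-k|}|\gamma_j||\gamma_k|\le \tfrac{1+\rho}{1-\rho}\sum_j|\gamma_j|^2$.
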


\begin{proof}
	First recall that
$	\displaystyle
	\Var\bigl(\sum_{k=n_0+2}^\infty \gamma_{k-n_0-1}Z_k^v \mid \clF_{n_0+1}\bigr) \leq \Expect\bigl[|\sum_{k=n_0+2}^\infty \gamma_{k-n_0-1}Z_k^v|^2 \mid \clF_{n_0+1}\bigr] 
	$, and hence by the Markov property, 
\[
	\Expect\bigl[|\sum_{k=n_0+2}^\infty \gamma_{k-n_0-1}Z_k^v|^2 \mid \clF_{n_0+1}\bigr] 
	= \Expect_{z'}\bigl[|\sum_{k=1}^\infty \gamma_{k}Z_k^v|^2 \bigr] = \lim_{n\rightarrow\infty} \Expect_{z'}\bigl[|\sum_{k=1}^n \gamma_{k}Z_k^v|^2 \bigr] 
\]
	where $z' = \Phi_n$, and the last equality holds by the assumption $\gamma \in\ell_1$ and dominated convergence.  
	\sh{should we change this part $\gamma \in \ell_1$ ?}	
	For each $n$, letting $\lceil\gamma\rceil^n = (\gamma_1,\dots,\gamma_n)$ denote $\gamma$ truncated at index $n$, we have
\begin{equation}
	\label{e:cov-quad-form}
	\Expect_{z'}\bigl[|\sum_{k=1}^n \gamma_{k}Z_k^v|^2 \bigr] = \sum_{k=1}^n|\gamma_k|^2\Expect_{z'}\bigl[|Z_k^v|^2\bigr] + \sum_{i=1}^n\sum_{j\neq i}^n \gamma_i^\dagger \gamma_j \Expect_{z'}\bigl[(Z_i^v)^\dagger Z_j^v\bigr] = (\lceil\gamma\rceil^n)^\dag [R]_n \lceil\gamma\rceil^n
\end{equation}
	where $[R]_n\in \mathbb{C}^{n\times n}$ is the covariance matrix with each entry defined as $R(i,j) = \Expect_{z'}\bigl[(Z_i^v)^\dagger Z_j^v\bigr], 1\leq i,j \leq n$; $[R]_n$ is Hermitian and positive semi-definite. With $\lambda_n \geq 0$ denoting the largest eigenvalue of $[R]_n$, we have
\begin{equation}
	\label{e:cov-quad-eigen}
	(\lceil\gamma\rceil^n)^\dag [R]_n \lceil\gamma\rceil^n \leq \lambda_n \sum_{k=1}^n |\gamma_k|^2 \leq \lambda_n\sum_{k=1}^\infty |\gamma_k|^2
\end{equation}
	By the Gershgorin circle theorem~\cite{golvan1996matrix}, the maximal eigenvalue is upper bounded by the maximum row sum of absolute values of entries:
\[
	\lambda_n \leq \max_{i\in\{1,\dots,n\}} \sum_{j=1}^n | R(i,j)| \leq \sup_{i\in\mathbb{Z}_+} \sum_{j=1}^\infty |R(i, j)|
\]
	For any $i$, observe that
\[
	\sum_{j=1}^{\infty} |R(i,j)| = \Expect_{z'}\bigl[|Z_i^v|^2\bigr] + \sum_{i<j} |R(i,j)| + \sum_{i > j} |R(i,j)|
\]
	Since $V$-uniform ergodicity of the Markov chain $\bfPhi$ implies $V$-geometric mixing~\cite[Theorem 16.1.5]{MT} and $|v^\transpose\haf|^2 \in \LV$, there exist $B<\infty$ and $r\in(0,1)$ such that for each $i, k \in \mathbb Z_+$,
\[
	\Bigl| R(i, i+k) - \Expect_{z'}\bigl[(Z_i^v)^\dagger\bigr]\Expect_{z'}\bigl[Z_{i+k}^v\bigr]\Bigl| \leq B r^k[1+r^iV(z')]
\]
	Consequently,
\begin{equation}
	\label{e:cov-absSum}
	\begin{aligned}
	\sum_{j=1}^{\infty} |R(i,j)| \leq \, & \Expect_{z'}\bigl[|Z_i^v|^2\bigr] + \Bigl|\Expect_{z'}\bigl[(Z_i^v)^\dagger\bigr]\Bigr| \sum_{j=1}^\infty \Bigl|\Expect_{z'}[Z_j^v]\Bigr| \\
	&+ \sum_{i<j} Br^{j-i}[1+r^i V(z')] + \sum_{i>j} Br^{i-j}[1+r^j V(z')] 
	\end{aligned}
\end{equation}
	Given $|v^\transpose\haf|^2\in\LV$, by \eqref{e:Vuni},
\[
	\Expect_{z'}\bigl[|Z_n^v|^2\bigr] \leq \Expect_{\pi}\bigl[|Z_n^v|^2\bigr] + B_V \bigl\| |v^\transpose\haf|^2 \bigr\|_V V(z')
\]
	The Markov chain $\bfPhi$ is also  $\sqrt{V}$-uniformly ergodic. By \eqref{e:Vuni} for $\sqrt{V}$ and $|v^\transpose\haf|^2\in\LV$ once more, 
\[
	\bigl|\Expect_{z'}[(Z_i^v)^\dagger]\bigr| \leq B_{\scriptscriptstyle\sqrt{V}}\|v^\transpose\haf\|_{\scriptscriptstyle\sqrt{V}} \sqrt{V(z')}\rho^j
\]
	Hence
\[
	\bigl|\Expect_{z'}[(Z_i^v)^\dagger]\bigr| \sum_{j=1}^\infty \bigl|\Expect_{z'}[Z_j^v]\bigr| \leq B_{\scriptscriptstyle\sqrt{V}}^2\|v^\transpose\haf\|^2_{\scriptscriptstyle\sqrt{V}} V(z')\rho^i \sum_{j=1}^\infty \rho^j \leq B^2_{\scriptscriptstyle\sqrt{V}}\|v^\transpose\haf\|^2_{\scriptscriptstyle\sqrt{V}}  \frac{\rho}{1-\rho}V(z')
\]
	The other two terms on the right hand side of \eqref{e:cov-absSum} are bounded as follows:
\[
	\begin{aligned}
	\sum_{j>i} B r^{j-i}[1+r^i V(z')]  &= \sum_{j>i} B [r^{j-i}+r^j V(z')] \leq \frac{Br}{1-r}(1+V(z')) \\
	\sum_{j<i} B r^{i-j}[1+r^j V(z')] &= \bigl[\sum_{j<i} B [r^{i-j}]\bigr] + BV(z') (i-1) r^i \leq \frac{Br}{1-r} + BV(z')\sup_i i r^i
	\end{aligned}
\]
	where $\sup_i ir^i$ exists since $\lim_{n\rightarrow\infty} nr^n = 0$. 
	
	Consequently, there exists some deterministic constant $K'$ independent of $z'$ such that, the largest eigenvalues $\{\lambda_n\}$ are uniformly bounded
\[
	\sup_n \lambda_n \leq K'V(z')
\] 
	Combining this with \eqref{e:cov-quad-form} and \eqref{e:cov-quad-eigen} gives
\[
	\Expect_{z'}\bigl[|\sum_{k=1}^\infty Z_k^v|^2\bigr] \leq K'V(z') \sum_{k=1}^\infty |\gamma_k|^2
\]
	Therefore,
\[
	\Expect\Bigl[\Expect\bigl[|\sum_{k=n_0+2}^\infty \gamma_{k-n_0-1}Z_k^v|^2 \mid \clF_{n_0+1}\bigr] \mid \Phi_0 = z\Bigr] \leq K'\Expect\bigl[V(\Phi_{n_0+1})\mid \Phi_0 =z \bigr] \sum_{k=1}^\infty |\gamma_k|^2 
\]
	By $V\in \LV$ and \eqref{e:Vuni} again, $\Expect[V(\Phi_{n_0+1})\mid \Phi_0 =z] \leq \pi(V) + B_V V(z)$. The desired conclusion then follows by setting $K = K'(B_V V(z) + \pi(V))$.
\end{proof}

\begin{lemma}
	\label{t:unstable}
	Suppose Assumptions A1-A3 hold and $\Sigma_{\Delta}v \neq 0$. With $\{\tiltheta_n^\varrho\}$ updated via \eqref{e:clElin},
\[
	\liminf_{n\rightarrow \infty} \Expect[|v^\transpose \tiltheta_n^{\varrho}|^2] = \infty\,, \qquad \varrho > \varrho_0
\]
\end{lemma}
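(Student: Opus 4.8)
The plan is to read off everything from the closed-form iteration \eqref{e:clE-iterate}, which writes $v^\transpose\tiltheta_{n+1}^\varrho = P_n\bigl[v^\transpose\tiltheta_{n_0}^\varrho + \sum_{k=n_0+1}^{n+1}\beta_k v^\transpose\Delta_k\bigr]$ with the \emph{deterministic} prefactor $P_n\eqdef\prod_{k=n_0+1}^{n+1}[1+\alpha_k\tilde\varrho_k+\alpha_k ui]$. Since $P_n$ is nonrandom, $\Expect[|v^\transpose\tiltheta_{n+1}^\varrho|^2]=|P_n|^2\,\Expect\bigl[\bigl|v^\transpose\tiltheta_{n_0}^\varrho+\sum_{k=n_0+1}^{n+1}\beta_k v^\transpose\Delta_k\bigr|^2\bigr]$, so the proof reduces to two independent claims: (a) the prefactor blows up, $|P_n|^2\to\infty$; and (b) the bracketed second moment stays bounded away from zero for a suitably chosen (fixed) $n_0$. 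Claim (b) is the crux.

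For (a) I would take logarithms. Using $(1+\alpha_k\tilde\varrho_k)^2+\alpha_k^2u^2=1+2\alpha_k\tilde\varrho_k+O(\alpha_k^2)$ together with $\tilde\varrho_k=\varrho-\varrho_0+\epsy_v(k,\varrho)$ and $\epsy_v(k,\varrho)=O(k^{-1})$, one gets $\log|P_n|^2=2(\varrho-\varrho_0)\sum_{k=n_0+1}^{n+1}\alpha_k+O(1)=2(\varrho-\varrho_0)\log n+O(1)$, so $|P_n|^2=\Theta\bigl(n^{2(\varrho-\varrho_0)}\bigr)\to\infty$ because $\varrho>\varrho_0$.

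For (b), \Lemma{t:rs-convergence} together with a routine $L^2$ estimate shows the bracket converges in $L^2$ to $S\eqdef v^\transpose\tiltheta_{n_0}^\varrho+\sum_{k\ge n_0+1}\beta_k v^\transpose\Delta_k$, so $\Expect[|\text{bracket}|^2]\to\Expect[|S|^2]$ and it suffices to prove $\Expect[|S|^2]>0$. I would bound below by the conditional variance, $\Expect[|S|^2]\ge\Expect[\Var(S\mid\clF_{n_0+1})]$, which removes the $\clF_{n_0+1}$-measurable term $v^\transpose\tiltheta_{n_0}^\varrho$. Writing $v^\transpose\Delta_k=v^\transpose\Delta_{k+1}^m+Z_k^v-Z_{k+1}^v$ with $Z_k^v=v^\transpose\haf(\Phi_k)$, split $S=C+M+T$, where $C$ is $\clF_{n_0+1}$-measurable, $M=\sum_{k\ge n_0+1}\beta_k v^\transpose\Delta_{k+1}^m$ is a martingale part, and $T=\sum_{k\ge n_0+1}\beta_k(Z_k^v-Z_{k+1}^v)$ telescopes. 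The conditional-$L^2$ triangle inequality gives $\sqrt{\Expect[\Var(S\mid\clF_{n_0+1})]}\ge\sqrt{\Expect[\Var(M\mid\clF_{n_0+1})]}-\sqrt{\Expect[\Var(T\mid\clF_{n_0+1})]}$. By orthogonality of the martingale increments (all future relative to $\clF_{n_0+1}$), $\Expect[\Var(M\mid\clF_{n_0+1})]=\sum_{k\ge n_0+1}|\beta_k|^2\sigma_k^2$, where $\sigma_k^2\eqdef\Expect[|v^\transpose\Delta_{k+1}^m|^2]=v^\transpose\Sigma_{\Delta_{k+1}}\overline v\to v^\transpose\Sigma_\Delta\overline v$; and writing $v=a+bi$ the hypothesis $\Sigma_\Delta v\neq0$ forces $v^\transpose\Sigma_\Delta\overline v=a^\transpose\Sigma_\Delta a+b^\transpose\Sigma_\Delta b>0$ since $\Sigma_\Delta$ is real symmetric and positive semidefinite.

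It remains to show the telescoping part cannot swamp the martingale part, which is the main obstacle. Summation by parts reduces $T$ (modulo a $\clF_{n_0+1}$-measurable boundary term and an $L^2$-vanishing tail) to $\sum_{k\ge n_0+2}(\beta_k-\beta_{k-1})Z_k^v$, so \Lemma{t:var-bound} yields $\Expect[\Var(T\mid\clF_{n_0+1})]\le K\sum_{k\ge n_0+2}|\beta_k-\beta_{k-1}|^2$, and \Lemma{t:beta} bounds $|\beta_k-\beta_{k-1}|^2\le4(1+u^2)|\beta_k|^2\alpha_{k-1}^2$. Since $\alpha_{k-1}^2\le\alpha_{n_0}^2$ throughout the range, the telescoping contribution is at most $4(1+u^2)K\alpha_{n_0}^2\sum_{k\ge n_0+1}|\beta_k|^2$, while the martingale contribution is at least $\sigma_{\min}^2\sum_{k\ge n_0+1}|\beta_k|^2$ with $\sigma_{\min}^2\eqdef\inf_{k\ge n_0+1}\sigma_k^2$. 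The competition is resolved by first fixing $n_0$ large enough that $\sigma_k^2\ge\tfrac12 v^\transpose\Sigma_\Delta\overline v>0$ for all $k\ge n_0+1$, and then enlarging $n_0$ so that $4(1+u^2)K\alpha_{n_0}^2\le\tfrac14\sigma_{\min}^2$; this is possible because $\sigma_k^2$ converges to a positive limit and $\alpha_{n_0}\to0$. For such $n_0$ the triangle inequality gives $\Expect[\Var(S\mid\clF_{n_0+1})]\ge\tfrac14\sigma_{\min}^2\sum_{k\ge n_0+1}|\beta_k|^2>0$, establishing $\Expect[|S|^2]>0$. Combining (a) and (b) then yields $\Expect[|v^\transpose\tiltheta_n^\varrho|^2]\to\infty$, which is stronger than the claimed $\liminf=\infty$.
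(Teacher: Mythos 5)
Your proposal is correct and follows essentially the same route as the paper's proof: the same closed-form representation \eqref{e:clE-iterate}, the same reduction to divergence of the deterministic prefactor plus a lower bound on the bracket's second moment via $\Expect[\Var(\,\cdot\mid\clF_{n_0+1})]$, the same Poisson-equation split into martingale and telescoping parts controlled by \Lemma{t:beta}, \Lemma{t:rs-convergence} and \Lemma{t:var-bound}, and the same device of enlarging $n_0$ so that the telescoping variance $O(\alpha_{n_0}^2)\sum_k|\beta_k|^2$ is dominated by the martingale term $\sum_k|\beta_k|^2\sigma_k^2$. Your deviations are only cosmetic refinements of that argument---passing to the limit in $L^2$ rather than via Fatou's lemma, a seminorm triangle inequality in place of the paper's bound $\Var(X+Y\mid\clF)\le 2\Var(X\mid\clF)+2\Var(Y\mid\clF)$, an explicit growth rate $\Theta(n^{2(\varrho-\varrho_0)})$ for the prefactor the paper calls obvious, and a verification that $\Sigma_\Delta v\neq 0$ forces $v^\transpose\Sigma_\Delta\overline{v}>0$, which the paper asserts without proof.
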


\begin{proof}
	
	With fixed $n_0$, equation \eqref{e:clE-iterate} gives a representation for $v^\transpose\tiltheta_{n+1}^\varrho$ for each $n\ge n_0$. It is obvious that $\liminf_{n\rightarrow\infty} \prod_{k=n_0+1}^n |1 + \tilde{\varrho}_k\alpha_k + \alpha_k ui|^2=\infty$. Hence it suffices to show that $\liminf_{n\rightarrow\infty} \Expect[|v^\transpose \tiltheta_{n_0}^\varrho + \sum_{k=n_0+1}^{n+1} \beta_k v^\transpose \Delta_k|^2]$ is strictly greater than zero. 
	
	By Fatou's lemma,
\[
	\begin{aligned}
	\liminf_{n\rightarrow\infty} 
	\Expect\bigl[|v^\transpose \tiltheta_{n_0}^\varrho + \sum_{k=n_0+1}^{n+1} \beta_k v^\transpose \Delta_k|^2\bigr] 
	&\geq \Expect\bigl[\liminf_{n\rightarrow\infty} |v^\transpose \tiltheta_{n_0}^\varrho + \sum_{k=n_0+1}^{n+1} \beta_k v^\transpose \Delta_k|^2\bigr] \\
	& = \Expect\bigl[|v^\transpose \tiltheta_{n_0}^\varrho + \sum_{k=n_0+1}^{\infty} \beta_k v^\transpose \Delta_k|^2\bigr]\\
	&\geq \Var\bigl(v^\transpose \tiltheta_{n_0}^\varrho + \sum_{k=n_0+1}^{\infty} \beta_k v^\transpose \Delta_k\bigr)
	\end{aligned}
\]
	where the equality holds by \Lemma{t:rs-convergence}. 
	By the law of total variance,
\[
	\begin{aligned}
	\Var\bigl(v^\transpose \tiltheta_{n_0}^\varrho + \sum_{k=n_0+1}^{\infty} \beta_k v^\transpose \Delta_k\bigr) & \geq \Expect\bigl[\Var(v^\transpose \tiltheta_{n_0}^\varrho + \sum_{k=n_0+1}^{\infty} \beta_k v^\transpose \Delta_k \mid \clF_{n_0+1 })\bigr]  \\
	& = \Expect\bigl[\Var(\sum_{k=n_0+1}^{\infty} \beta_k v^\transpose \Delta_k \mid \clF_{n_0+1})\bigr]
	\end{aligned}
\]
	
	Apply once more the decomposition based on Poisson's equation:
\[
	v^\transpose \Delta_n = \Delta_{n+1}^{vm} + Z_n^v - Z_{n+1}^v\,, \qquad n\geq 1 \,,
\]
	where $Z_n^v = v^\transpose\haf(\Phi_n)$ and $\Delta_{n+1}^{vm} = Z_{n+1}^v - \Expect[Z_{n+1}^v\mid\clF_n]$ is a martingale difference. By the variance inequality $\Var(X+Y\mid\clF_{n_0+1}) \leq 2\Var(X\mid\clF_{n_0+1}) + 2\Var(Y\mid\clF_{n_0+1})$, we have
\begin{equation}
	\label{e:var-decomp}
	\begin{aligned}
	\Expect\bigl[\Var &(\sum_{k=n_0+1}^{\infty} \beta_k v^\transpose \Delta_k \mid  \clF_{n_0+1})\bigr] \\
	& \geq 
	\half \Expect\bigl[\Var(\sum_{k=n_0+1}^{\infty} \beta_k \Delta_{k+1}^{vm} \mid \clF_{n_0+1})\bigr]
	- \Expect\bigl[\Var(\sum_{k=n_0+1}^{\infty} \beta_k (Z_k^v - Z_{k+1}^v) \mid\clF_{n_0+1})\bigr]
	\end{aligned}
\end{equation}

By the law of total variance once more, 
\[
	\Var\bigl(\sum_{k=n_0+1}^\infty \beta_k \Delta_{k+1}^{vm}\bigr) = \Expect\bigl[\Var(\sum_{k=n_0+1}^\infty \beta_k \Delta_{k+1}^{vm}\mid \clF_{n_0+1})\bigr] 
	+ \Var\bigl(\Expect[\sum_{k=n_0+1}^\infty \beta_k\Delta_{k+1}^{vm} \mid \clF_{n_0+1}]\bigr) 
\]
Note that $\lim_{n\to\infty}\Expect[\sum_{k=n_0+1}^n \beta_k\Delta_{k+1}^{vm} \mid \clF_{n_0+1}]$ converges to zero \emph{almost surely}. With $\{\beta_n\} \in\ell_2$ and the Jensen's inequality, we have for all $n$, 
	\[
	 \bigl|\Expect[\sum_{k=n_0+1}^n \beta_k\Delta_{k+1}^{vm} \mid \clF_{n_0+1}]\bigr|^2 \leq \sum_{k=n_0+1}^{\infty} |\beta_k|^2 \Expect[|\Delta_{k+1}^{vm}|^2 \mid \clF_{n_0+1}] <  \infty
	\]
Then by the dominated convergence theorem, $\Expect\bigl[\bigl|\Expect[\sum_{k=n_0+1}^\infty \beta_k\Delta_{k+1}^{vm} \mid \clF_{n_0+1}]\bigr|^2\bigr] = 0$. Therefore, 
\[
	\Var\bigl(\Expect[\sum_{k=n_0+1}^\infty \beta_k\Delta_{k+1}^{vm} \mid \clF_{n_0+1}]\bigr)  \leq \Expect\bigl[\bigl|\Expect[\sum_{k=n_0+1}^\infty \beta_k\Delta_{k+1}^{vm} \mid \clF_{n_0+1}]\bigr|^2\bigr] = 0
\]

	Hence,
\begin{equation}
	\label{e:varMD}
	\Expect\bigl[\Var(\sum_{k=n_0+1}^\infty \beta_k \Delta_{k+1}^{vm}\mid\clF_{n_0+1})\bigr] =\Var\bigl(\sum_{k=n_0+1}^\infty \beta_k \Delta_{k+1}^{vm}\bigr)  = \sum_{k=n_0+1}^\infty |\beta_k|^2\sigma_{k+1}^2
\end{equation}
	where $\sigma_n^2 = \Var(\Delta_n^{vm})$. 
	
	For the telescoping term on the right hand side of \eqref{e:var-decomp}, we have
\begin{equation}
	\label{e:varTele}
	\begin{aligned}
	\Expect\bigl[\Var(\sum_{k=n_0+1}^{\infty} \beta_k (Z_k^v - Z_{k+1}^v) \mid\clF_{n_0+1})\bigr] &= \Expect\bigl[\Var( \beta_{n_0+1}Z_{n_0+1}^v - \sum_{k=n_0+2}^{\infty} (\beta_{k} - \beta_{k+1})Z_k^v \mid \clF_{n_0+1})\bigr] \\
	&= \Expect\bigl[\Var( \sum_{k=n_0+2}^{\infty} (\beta_{k} - \beta_{k+1})Z_k^v \mid \clF_{n_0+1})\bigr]
	\end{aligned}
\end{equation}
	Given $\{\beta_n - \beta_{n+1}\}\in\ell_1$ by \Lemma{t:beta}, \Lemma{t:var-bound} indicates that there exists some constant $K$ independent of $n_0$ such that,
\[
	\Expect\bigl[\Var( \sum_{k=n_0+2}^{\infty} (\beta_{k} - \beta_{k+1})\hat{Z}_k \mid \clF_{n_0+1})\bigr] \leq K \sum_{k=n_0+2}^\infty |\beta_k - \beta_{k+1}|^2
\]
	Combining \eqref{e:varMD} and \eqref{e:varTele} gives
\[
	\begin{aligned}
	\Expect[\Var(\sum_{k=n_0+1}^{\infty} \beta_k v^\transpose \Delta_k \mid \clF_{n_0+1})] \geq \half \sum_{k=n_0+1}^\infty |\beta_k|^2\sigma_{k+1}^2 - K \sum_{k=n_0+2}^\infty |\beta_k - \beta_{k+1}|^2
	\end{aligned}
\]
	Since $|v^\transpose\haf|^2\in\LV$ and $\sigma_n^2 \rightarrow \sigma^2=v^\transpose\Sigma_\Delta \overline{v}>0$ at a geometric rate, we set $n_0$ sufficiently large such that \Lemma{t:beta} holds and moreover for all $n\geq n_0$,
\[
	\sigma_n^2 \geq \half \sigma^2,\qquad \frac{1}{4}\sigma^2 - 4K\alpha_n^2(1+u^2) \geq \frac{1}{8}\sigma^2 \,, 
\]
	Then,
\[
	\Expect\bigl[\Var(\sum_{k=n_0+1}^{\infty} \beta_k v^\transpose \Delta_k \mid \clF_{n_0+1})\bigr] \geq \frac{1}{8}\sigma^2 \sum_{k=n_0+1}^\infty |\beta_k|^2 
\]
	Therefore,
\[
	\liminf_{n\rightarrow\infty} \Expect\bigl[|v^\transpose \tiltheta_{n_0}^\varrho + \sum_{k=n_0+1}^n \beta_k v^\transpose\Delta_k|^2\bigr] \geq \frac{1}{8}\sigma^2 \sum_{k=n_0+1}^\infty |\beta_k|^2 >0
\]
	The desired conclusion then follows from \eqref{e:clE-iterate}:
\[
	\liminf_{n\rightarrow\infty} \Expect\bigl[|v^\transpose\tiltheta_{n+1}^\varrho|^2\bigr] \geq \liminf_{n\rightarrow\infty} \prod_{k=n_0+1}^n |1 + \tilde{\varrho}_k\alpha_k + \alpha_k ui|^2 \cdot\liminf_{n\rightarrow\infty}  \Expect\bigl[|v^\transpose \tiltheta_{n_0}^\varrho + \sum_{k=n_0+1}^n \beta_k v^\transpose\Delta_k|^2\bigr] = \infty
\]

\end{proof}

\subsection{Coupling of Deterministic and Random Linear SA}
\label{s:couple-td}

Let $\haclA:\zstate\rightarrow \Re^{d\times d}$ denote the zero-mean solution to the following Poisson equation:
\[
\Expect[\haclA(\Phi_{n+1})\mid \Phi_n = z] = \haclA(z) - \clA(z) + A \,, \qquad z\in\zstate 
\]
which is a matrix version of \eqref{e:fishDef}. Denote  $\Delta^{\clA}_{n+1} = \haclA(\Phi_{n+1}) - \Expect[\haclA(\Phi_{n+1})\mid \clF_n]$  (a martingale difference sequence), and $\clA_n = \haclA(\Phi_n)$. Then, from \eqref{e:linSAerr_An},
\[
\begin{aligned}
(A_{n+1}-A)\tiltheta^\circ_n & =  [  \Delta^{\clA}_{n+2} + \clA_{n+1} - \clA_{n+2} ] \tiltheta^\circ_n 
\\	
								 & = \Delta^{\clA}_{n+2}\tiltheta^\circ_n + \clA_{n+1}\tiltheta^\circ_n - \clA_{n+2}\tiltheta^\circ_{n+1} + \clA_{n+2}(\tiltheta^\circ_{n+1}-\tiltheta^\circ_{n}) 
			\\
								 & = \Delta^{\clA}_{n+2}\tiltheta^\circ_n 
						+ [ \clA_{n+1}\tiltheta^\circ_n - \clA_{n+2}\tiltheta^\circ_{n+1} ]
						+ \alpha_{n+1}\clA_{n+2}(A_{n+1}\tiltheta^\circ_{n} +\Delta_{n+1})
\end{aligned}
\]
The sequence $\{\clE_n\}$ from \eqref{e:clE-td-err} can be expressed as the sum
\[
\clE_n = \clE_n^{(1)} +  \clE_n^{(2)}+ \clE_n^{(3)}+ \clE_n^{(4)}
\]
where   $\clE_{n}^{(4)} = -\alpha_n\clA_{n+1}\tiltheta^\circ_{n}$,  and 
 the first three sequences are solutions to the following linear systems:
\begin{subequations}
\begin{align}
\clE_{n+1}^{(1)} & = \clE_{n}^{(1)}  + \alpha_{n+1}[A\clE_n^{(1)} + \Delta^{\clA}_{n+2}\tiltheta^\circ_{n}] \,,  && \clE_{0}^{(1)} = 0
\label{e:clE-td-err-decomp-1} \\
\clE_{n+1}^{(2)} & =\clE_{n}^{(2)} + \alpha_{n+1}[A\clE_n^{(2)} - \alpha_n [I+A]\clA_{n+1}\tiltheta^\circ_{n}] \,,  && \clE_1^{(2)} = \clA_1\tiltheta^\circ_0 
\label{e:clE-td-err-decomp-2} \\
\clE_{n+1}^{(3)} & =\clE_{n}^{(3)} + \alpha_{n+1}[A\clE_n^{(3)} + \alpha_{n+1} \clA_{n+2}(A_{n+1}\tiltheta^\circ_{n} +\Delta_{n+1})] \,,  && \clE_0^{(3)} =0 
\label{e:clE-td-err-decomp-3}
\end{align}
\label{e:clE-td-err-decomp}
\end{subequations}
The second recursion arises through the arguments used in the proof of \Lemma{t:Tele}.

Recall that $\lambda = -\varrho_0+ ui$ is an eigenvalue of the matrix $A$ with largest real part. For fixed $0<\varrho < \varrho_0$, let $T\geq 0$ denote the unique solution to the Lyapunov equation
\begin{equation}
[\varrho I + A]T + T[\varrho I + A]^\transpose + I = 0
\label{e:lyap-td}
\end{equation}
As previously, the norm of random vector $E\in \Re^d$ is defined as: $\|E\|_T = \sqrt{\Expect[E^\transpose TE]}$.

\begin{lemma}
\label{t:clE-td-err-inequalities}
Under Assumptions (A1)-(A4), there exist constants $L_{\ref{t:clE-td-err-inequalities}}$ and $K_{\ref{t:clE-td-err-inequalities}}$ such that, for all $n \ge 1$,
\begin{romannum}
\item The following holds for each $1\leq i \leq 3$,
\[
\|\clE_{n+1}^{(i)}\|_T^2 \leq (1-2\varrho\alpha_{n+1} + L_{\ref{t:clE-td-err-inequalities}}^2\alpha_{n+1}^2)\|\clE_{n}^{(i)}\|_T^2 + K_{\ref{t:clE-td-err-inequalities}}\alpha_{n+1}^2(\|\clE_n\|_T^2 + \|\tiltheta^\bullet_n\|_T^2 + 1)
\]
\item The following holds for $\clE_n^{(4)}$, 
\[
\|\clE_{n+1}^{(4)}\|_T^2  \leq K_{\ref{t:clE-td-err-inequalities}}\alpha_{n+1}^2( \|\clE_n\|_T^2 +\|\tiltheta^\bullet_n\|_T^2 +1)
\] 
\end{romannum}
\end{lemma}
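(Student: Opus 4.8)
The plan is to treat each $\clE_n^{(i)}$, $1\le i\le 3$, as a perturbed linear recursion $\clE_{n+1}^{(i)}=[I+\alpha_{n+1}A]\clE_n^{(i)}+\alpha_{n+1}W_{n+1}^{(i)}$, with the forcing terms read off from \eqref{e:clE-td-err-decomp}: $W_{n+1}^{(1)}=\Delta^{\clA}_{n+2}\tiltheta^\circ_n$, $W_{n+1}^{(2)}=-\alpha_n[I+A]\clA_{n+1}\tiltheta^\circ_n$, and $W_{n+1}^{(3)}=\alpha_{n+1}\clA_{n+2}(A_{n+1}\tiltheta^\circ_n+\Delta_{n+1})$. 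Expanding the squared $T$-norm gives, for each $i$,
\[
\|\clE_{n+1}^{(i)}\|_T^2=\|[I+\alpha_{n+1}A]\clE_n^{(i)}\|_T^2+2\alpha_{n+1}\Expect\big[([I+\alpha_{n+1}A]\clE_n^{(i)})^\transpose T\,W_{n+1}^{(i)}\big]+\alpha_{n+1}^2\|W_{n+1}^{(i)}\|_T^2,
\]
and I will bound the three pieces separately, substituting $\tiltheta^\circ_n=\clE_n+\tiltheta^\bullet_n$ throughout to convert the dependence on $\tiltheta^\circ_n$ into the quantities $\|\clE_n\|_T^2+\|\tiltheta^\bullet_n\|_T^2$ appearing on the right.

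For the homogeneous term I reuse the Lyapunov estimate already established in \Lemma{t:converge-sa} and \Lemma{t:prod-matrix}: since $T$ solves \eqref{e:lyap-td} with $\varrho I+A$ Hurwitz, the same manipulation used there — expanding $(I+\alpha_{n+1}A)^\transpose T(I+\alpha_{n+1}A)$, substituting the Lyapunov relation, and discarding the resulting negative $-\alpha_{n+1}I$ contribution — yields $\|[I+\alpha_{n+1}A]E\|_T^2\le(1-2\varrho\alpha_{n+1}+L^2\alpha_{n+1}^2)\|E\|_T^2$, with $L$ the induced $T$-operator norm of $A$. This supplies the contraction factor common to all of part (i), and the whole point of the argument is to verify that neither the cross term nor the forcing term degrades the coefficient $-2\varrho\alpha_{n+1}$.

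The cross term is where the three cases genuinely differ. For $i=1$ the matrix $\Delta^{\clA}_{n+2}$ is a martingale difference with $\Expect[\Delta^{\clA}_{n+2}\mid\clF_{n+1}]=0$, whereas both $\clE_n^{(1)}$ and $\tiltheta^\circ_n$ are $\clF_n$-measurable; conditioning on $\clF_{n+1}$ annihilates the cross term exactly, leaving only $\alpha_{n+1}^2\|\Delta^{\clA}_{n+2}\tiltheta^\circ_n\|_T^2$. For $i=2,3$ there is no martingale structure, but $W_{n+1}^{(i)}$ already carries an extra gain factor ($\alpha_n$ for $i=2$, $\alpha_{n+1}$ for $i=3$); applying Young's inequality together with $\alpha_n\le 2\alpha_{n+1}$ shows the cross term is $O(\alpha_{n+1}^2)$, so it folds into the $L^2\alpha_{n+1}^2\|\clE_n^{(i)}\|_T^2$ remainder and into the $K\alpha_{n+1}^2(\cdots)$ bucket while leaving $-2\varrho\alpha_{n+1}$ untouched. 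What then remains for all three — and, in part (ii), for $\clE_n^{(4)}=-\alpha_n\clA_{n+1}\tiltheta^\circ_n$, where $\|\clE_{n+1}^{(4)}\|_T^2=\alpha_{n+1}^2\|\clA_{n+2}\tiltheta^\circ_{n+1}\|_T^2$ and the one-step increment from $\tiltheta^\circ_{n+1}$ to $\tiltheta^\circ_n$ via \eqref{e:linSAerr_An} is itself $O(\alpha_{n+1})$ — is to bound forcing terms of the form $\|M_n\tiltheta^\circ_n\|_T^2$, where $M_n$ is one of $\Delta^{\clA}_{n+2},\clA_{n+1},\clA_{n+2}A_{n+1}$. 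The one piece that does not multiply the parameter error, namely $\clA_{n+2}\Delta_{n+1}$ inside $W_{n+1}^{(3)}$, is uniformly bounded in $L_1$ by (A4) and produces the additive constant ``$+1$''.

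The main obstacle is precisely the bound on $\|M_n\tiltheta^\circ_n\|_T^2$. Because (A4) furnishes only $\|\clA_{i,j}^2\|_V<\infty$ (and the analogous bounds for $\Delta^{\clA}$ and $A_{n+1}$) rather than uniform boundedness, conditioning on $\clF_n$ gives $\Expect[M_n^\transpose T M_n\mid\clF_n]\le C\,V(\Phi_n)\,I$ through the drift condition equivalent to (A1), so that $\|M_n\tiltheta^\circ_n\|_T^2\le C\,\Expect[V(\Phi_n)\|\tiltheta^\circ_n\|^2]$. Converting this $V$-weighted second moment into the unweighted quantities $\|\clE_n\|_T^2+\|\tiltheta^\bullet_n\|_T^2+1$ is the delicate point: it rests on a companion bound controlling $\Expect[V(\Phi_n)\|\tiltheta^\circ_n\|^2]$, obtained by iterating \eqref{e:linSAerr_An} and propagating the $V$-moments of the Markovian coefficients through the drift inequality. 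Once this weighted-moment control is in hand, collecting the contraction factor from the homogeneous term with the $O(\alpha_{n+1}^2)$ contributions of the cross and forcing terms yields exactly the stated recursive inequalities, with $L_{\ref{t:clE-td-err-inequalities}}$ and $K_{\ref{t:clE-td-err-inequalities}}$ absorbing all constants.
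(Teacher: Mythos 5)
Your architecture coincides with the paper's proof almost step for step: the same reading of \eqref{e:clE-td-err-decomp} as a Lyapunov contraction plus forcing term, the same manipulation of $\|[I+\alpha_{n+1}A]E\|_T^2$ via \eqref{e:lyap-td} to extract the factor $1-2\varrho\alpha_{n+1}+L^2\alpha_{n+1}^2$, the exact cancellation of the cross term for $i=1$ by conditioning on $\clF_{n+1}$, Young's inequality to fold the cross terms for $i=2,3$ into the $O(\alpha_{n+1}^2)$ remainder (this is precisely the paper's \Lemma{t:cs-extended}, applied with $c=n(n+1)$), the substitution $\tiltheta^\circ_n=\clE_n+\tiltheta^\bullet_n$, and for part (ii) the one-step expansion $\tiltheta^\circ_{n+1}=[I+\alpha_{n+1}A_{n+1}]\tiltheta^\circ_n+\alpha_{n+1}\Delta_{n+1}$. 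Up to the forcing term, your proof and the paper's are the same.

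The gap is in the forcing-term bound, which you correctly single out as the delicate point but do not close. Conditioning gives $\Expect[\|M_n\tiltheta^\circ_n\|_T^2]\le C\,\Expect[V(\Phi_n)\|\tiltheta^\circ_n\|^2]$, and you then defer to an unproven ``companion bound'' on this weighted moment, to be obtained by iterating \eqref{e:linSAerr_An} and propagating $V$-moments through the drift inequality. That program does not go through under (A1)-(A4) as stated: each iteration multiplies by $I+\alpha_{n+1}A_{n+1}$, and (A4) gives only $\|A_{n+1}\|^2\le c\,V(\Phi_{n+1})$, so the one-step estimate for $\Expect[V(\Phi_{n+1})\|\tiltheta^\circ_{n+1}\|^2\mid\clF_n]$ produces a term of order $\Expect[V(\Phi_{n+1})^2\mid\clF_n]$; the drift condition equivalent to (A1) controls $PV$, not $PV^2$, so the induction does not close, and Cauchy--Schwarz merely trades the problem for $V^2$-moments and fourth moments of $\tiltheta^\circ_n$. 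Closing it would require strengthening the assumptions (e.g., $\|\Amap_{i,j}^2\|_{\sqrt V}<\infty$ together with the drift for $V$, or a drift for $V^2$). The paper avoids the weighted moment entirely: it decouples the random multiplier from the parameter error, writing $\|\Delta^{\clA}_{n+2}\tiltheta^\circ_n\|_T^2\le 2\|\Delta^{\clA}_{n+2}\|_T^2\bigl(\|\clE_n\|_T^2+\|\tiltheta^\bullet_n\|_T^2\bigr)$ and setting $K_1=\sup_n 2\|\Delta^{\clA}_{n+2}\|_T^2$, finite by \Theorem{t:Vuni} and (A1) -- admittedly a crude step that implicitly treats the noise magnitude as decoupled from $\tiltheta^\circ_n$, i.e., exactly the correlation you flagged -- but it is the step on which the lemma's constants rest. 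As submitted, your argument establishes the stated inequalities only conditionally on the missing companion lemma, whose sketched proof would fail.
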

The inequality below will be useful in  proving \Lemma{t:clE-td-err-inequalities}.
\begin{lemma}
	\label{t:cs-extended}
	For any real numbers $a, b$ and all $c>0$,
\[
	(a + b)^2 \leq (1+c^{-1})a^2+ (1+c)b^2
\]
\end{lemma}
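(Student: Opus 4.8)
The plan is to reduce the claimed inequality to an instance of Young's inequality by completing the square. First I would expand the left-hand side as $(a+b)^2 = a^2 + 2ab + b^2$ and expand the right-hand side as $(1+c^{-1})a^2 + (1+c)b^2 = a^2 + b^2 + c^{-1}a^2 + cb^2$. Subtracting the common term $a^2 + b^2$ from both sides, the desired inequality becomes equivalent to the single bound
\[
2ab \leq c^{-1}a^2 + cb^2.
\]

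The next step is to recognize this as a weighted arithmetic--geometric-mean (Young) inequality and to verify it by completing the square. Rearranging, the inequality $2ab \leq c^{-1}a^2 + cb^2$ is the same as $0 \leq c^{-1}a^2 - 2ab + cb^2$, and the right-hand side factors as a perfect square,
\[
c^{-1}a^2 - 2ab + cb^2 = \bigl( c^{-1/2}a - c^{1/2}b \bigr)^2 \geq 0,
\]
which is valid precisely because $c > 0$ guarantees that $c^{1/2}$ and $c^{-1/2}$ are well defined real numbers. This establishes the reduced bound, and tracing back through the reduction gives the statement of the lemma for all real $a,b$ and all $c>0$.

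There is no substantive obstacle here: the entire argument is a one-line completing-the-square computation, and the only point requiring any care is that the parameter $c$ must be strictly positive so that the factorization uses real square roots (equivalently, so that both coefficients $c^{-1}$ and $c$ appearing in the weights are finite and positive). The result is stated separately because it is invoked repeatedly with different choices of the free parameter $c$ in the proof of \Lemma{t:clE-td-err-inequalities}, where $c$ will be tuned against the step-size $\alpha_{n+1}$ to absorb cross terms of the form $2ab$ into a contracting diagonal part plus a lower-order remainder.
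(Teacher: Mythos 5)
Your proof is correct and follows essentially the same route as the paper: both reduce the claim to the cross-term bound $2ab \leq c^{-1}a^2 + cb^2$, which the paper verifies via $2ab = 2(a/\sqrt{c})(\sqrt{c}\,b) \leq a^2/c + cb^2$ and you verify by writing $c^{-1}a^2 - 2ab + cb^2 = \bigl(c^{-1/2}a - c^{1/2}b\bigr)^2 \geq 0$ --- the identical perfect-square fact in a slightly different dress. No gap; nothing further needed.
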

\begin{proof}
	With $(a+b)^2 = a^2 + b^2 + 2ab$, the result follows directly from the inequality
\[
	2ab=  2 (a/\sqrt{c})(\sqrt{c}b) \leq a^2/c + cb^2                                                                                                   
\]
\end{proof}

\begin{proof}[Proof of \Lemma{t:clE-td-err-inequalities}]
First consider $\{\clE_n^{(1)}\}$ updated via \eqref{e:clE-td-err-decomp-1}. Since the martingale difference sequence $\Delta^{\clA}_{n+2}$ is uncorrelated with $\tiltheta^\circ_n$ or $\clE_n^{(1)}$, we have
\[
\|\clE_{n+1}^{(1)}\|_T^2 = \|[I+\alpha_{n+1}A]\clE_n^{(1)}\|_T^2 + \alpha_{n+1}^2 \|\Delta^{\clA}_{n+2}\tiltheta^\circ_{n}\|_T^2 
\]
Using the fact that $T\geq 0$ solves the Lyapunov equation \eqref{e:lyap-td} gives
\[
\|\clE_{n+1}^{(1)}\|_T^2 \leq 
(1-2\varrho\alpha_{n+1} + L_{1}^2\alpha_{n+1}^2)\|\clE_n^{(1)}\|_T^2 + \alpha_{n+1}^2 \|\Delta^{\clA}_{n+2}\tiltheta^\circ_{n}\|_T^2  
\]
where $L_{1}=\|A\|_T$ (the induced operator norm).  With $\tiltheta^\circ_{n} = \clE_n + \tiltheta^\bullet_n$,
\[
\|\Delta^{\clA}_{n+2}\tiltheta^\circ_{n}\|_T^2 \leq 2\|\Delta^{\clA}_{n+2}\|_T^2(\|\clE_n\|_T^2 + \|\tiltheta^\bullet_n\|_T^2)
\]
Consequently,
\begin{equation}
\|\clE_{n+1}^{(1)}\|_T^2 \leq 
(1-2\varrho\alpha_{n+1} + L_{1}^2\alpha_{n+1}^2)\|\clE_n^{(1)}\|_T^2 + K_1\alpha_{n+1}^2(\|\clE_n\|_T^2 + \|\tiltheta^\bullet_n\|_T^2)
\label{e:clE-td-err-1}
\end{equation}
where $K_1 = \sup_{n} 2 \|\Delta^{\clA}_{n+2}\|_T^2$ is finite by the $V$-uniform ergodicity of $\bfPhi$ applied to $\haclA_{i,j}^2$  (recall 
 \Theorem{t:Vuni}).

For $\{\clE_n^{(2)}\}$ updated by \eqref{e:clE-td-err-decomp-2}, using \Lemma{t:cs-extended} with $c=n(n+1)$ gives
\[
\begin{aligned}
\|\clE_{n+1}^{(2)}\|_T^2 
										 \leq & (1+\alpha_n\alpha_{n+1})(1-2\varrho\alpha_{n+1} + L_1^2\alpha_{n+1}^2)\|\clE_n^{(2)}\|_T^2 \\  
 										& + 2(\alpha_n\alpha_{n+1}+ \alpha_n^2\alpha_{n+1}^2)\|[I+A]\clA_{n+1}\|_T^2(\|\clE_n\|_T^2 + \|\tiltheta^\bullet_n\|_T^2)
\end{aligned}
\]
We can find $L_2$ and $K_2$ such that for all $n\geq 1$,
\[
\begin{aligned}
\alpha_{n+1}^2L_1^2 + \alpha_n\alpha_{n+1}(1-2\varrho\alpha_{n+1} + L_1^2\alpha_{n+1}^2) &\leq L_2^2\alpha_{n+1}^2 \\
                           2(\alpha_n\alpha_{n+1}+ \alpha_n^2\alpha_{n+1}^2)\|[I+A]\clA_{n+1}\|_T^2 &\leq  K_2 \alpha_{n+1}^2
\end{aligned}
\]
We then obtain the desired form for the sequence $\{\clE_n^{(2)}\}$
\begin{equation}
\begin{aligned}
\|\clE_{n+1}^{(2)}\|_T^2 \leq (1-2\varrho\alpha_{n+1} + L_2^2\alpha_{n+1}^2)\|\clE_n^{(2)}\|_T^2  + K_2\alpha_{n+1}^2(\|\clE_n\|_T^2 + \|\tiltheta^\bullet_n\|_T^2)
\end{aligned}
\label{e:clE-td-err-2}
\end{equation}

The same argument applies to $\{\clE_{n}^{(3)}\}$ in \eqref{e:clE-td-err-decomp-3}. Therefore,  for some constants $L_3$ and $K_3$,
\begin{equation}
\|\clE_{n+1}^{(3)}\|_T^2  \leq  (1-2\varrho\alpha_{n+1} + L_3^2\alpha_{n+1}^2)\|\clE_n^{(3)}\|_T^2   + K_3\alpha_{n+1}^2(\|\clE_{n}\|_T^2 + \|\tiltheta^\bullet_n\|_T^2 + 1)
\label{e:clE-td-err-3}
\end{equation}

A bound on the final term $\clE_{n+1}^{(4)}= -\alpha_{n+1}\clA_{n+2}\tiltheta^\circ_{n+1}$ is relatively easy.
\[
\begin{aligned}
\|\clE_{n+1}^{(4)}\|_T^2 
&= \|\alpha_{n+1}\clA_{n+2}[\tiltheta^\circ_{n}+ \alpha_{n+1}(A_{n+1}\tiltheta^\circ_{n}+\Delta_{n+1})]\|_T^2\\
& \leq  2 \alpha_{n+1}^2 \|\clA_{n+2}\|_T^2 ( \|I+\alpha_{n+1}A_{n+1}\|_T^2 \|\tiltheta^\circ_n\|_T^2  + \alpha_{n+1}^2 \|\Delta_{n+1}\|_T^2) 
\end{aligned}
\]
Hence there exists some constant $K_4$ such that
\[
\|\clE_{n+1}^{(4)}\|_T^2  \leq K_4 \alpha_{n+1}^2 (\|\clE_n\|_T^2 +\|\tiltheta^\bullet_n\|_T^2+1)
\]
\end{proof}

The results in \Lemma{t:clE-td-err-inequalities} lead to a rough bound on $\|\tiltheta^\circ_n\|_T^2$ presented in the following. This intermediate result will be used later to establish the refined bound in \Theorem{t:couple-td}.

\begin{lemma}
\label{t:clE-td-err}
Under Assumptions (A1)-(A4), 
\[
\limsup_{n\rightarrow\infty} n^{\varrho} \|\tiltheta^\circ_n\|_T^2  < \infty  \,, \qquad \text{for } \, \varrho < \varrho_0 \text{ and } \varrho \leq 1
\]
\end{lemma}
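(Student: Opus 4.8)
The plan is to split $\tiltheta^\circ_n = \clE_n + \tiltheta^\bullet_n$, estimate the two pieces separately, and reduce the coupling error to a single scalar recursion of the type already solved by \Lemma{t:prod-n}. For the deterministic-matrix piece I would invoke \Lemma{t:converge-sa}: since $\tiltheta^\bullet_n$ solves exactly the linear recursion \eqref{e:SAlinearized}, that lemma gives $\|\tiltheta^\bullet_n\|_T^2\to 0$ at rate $n^{-2\varrho'}$ for every $\varrho'<\varrho_0$ with $\varrho'\le\half$. Taking $\varrho'=\min(\varrho,\half)$ yields $\|\tiltheta^\bullet_n\|_T^2=O(n^{-\min(2\varrho,1)})$, so that $n^\varrho\|\tiltheta^\bullet_n\|_T^2$ is bounded (indeed $\to 0$) whenever $\varrho\le 1$; in particular $\sup_n\|\tiltheta^\bullet_n\|_T^2<\infty$.

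The heart of the argument is control of $\clE_n$. Here I would set $\tilde S_n=\sum_{i=1}^4\|\clE_n^{(i)}\|_T^2$ and use the Cauchy--Schwarz bound $\|\clE_n\|_T^2=\|\sum_{i=1}^4\clE_n^{(i)}\|_T^2\le 4\tilde S_n$. Summing the four inequalities of \Lemma{t:clE-td-err-inequalities}, bounding $\sum_{i=1}^3\|\clE_n^{(i)}\|_T^2\le\tilde S_n$ inside the contractive factor (legitimate once $n$ is large enough that $1-2\varrho\alpha_{n+1}+L_{\ref{t:clE-td-err-inequalities}}^2\alpha_{n+1}^2\ge 0$) and substituting $\|\clE_n\|_T^2\le 4\tilde S_n$ into the feedback term, the self-reference closes: for all $n\ge n_0$,
\[
\tilde S_{n+1}\le\bigl(1-2\varrho\alpha_{n+1}+\tilde L^2\alpha_{n+1}^2\bigr)\tilde S_n+K'\alpha_{n+1}^2,
\]
with $\tilde L^2=L_{\ref{t:clE-td-err-inequalities}}^2+16K_{\ref{t:clE-td-err-inequalities}}$ and $K'=4K_{\ref{t:clE-td-err-inequalities}}\sup_n(\|\tiltheta^\bullet_n\|_T^2+1)<\infty$. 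The crucial point that makes this work is that the $\clE_n$-feedback in \Lemma{t:clE-td-err-inequalities} carries the factor $\alpha_{n+1}^2$ rather than $\alpha_{n+1}$, so it is absorbed into the $O(\alpha^2)$ coefficient without spoiling the $-2\varrho\alpha_{n+1}$ contraction.

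Unrolling this recursion from $n_0$ and applying \Lemma{t:prod-n} (with its $\varrho_0$ taken to be $2\varrho$, valid since $\varrho<\varrho_0$ makes $\varrho I+A$ Hurwitz and hence $T$ well defined) gives
\[
\tilde S_{n+1}\le K_{\ref{t:prod-n}}\tilde S_{n_0}\frac{(n_0+1)^{2\varrho}}{(n+2)^{2\varrho}}+\frac{K'K_{\ref{t:prod-n}}}{(n+2)^{2\varrho}}\sum_{k=n_0+1}^{n+1}k^{2\varrho-2}.
\]
The residual sum behaves as $O(1)$, $O(\log n)$, or $O(n^{2\varrho-1})$ according as $\varrho<\half$, $\varrho=\half$, or $\varrho>\half$; multiplying through by $n^\varrho$ shows in every case that $n^\varrho\tilde S_n$ is bounded precisely when $\varrho\le 1$. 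Combining with the $\tiltheta^\bullet_n$ estimate through $\|\tiltheta^\circ_n\|_T^2\le 2\|\clE_n\|_T^2+2\|\tiltheta^\bullet_n\|_T^2\le 8\tilde S_n+2\|\tiltheta^\bullet_n\|_T^2$ completes the proof. The main obstacle is exactly the closing of the feedback loop above; once the combined recursion is in the form required by \Lemma{t:prod-n} the rate follows mechanically, and the restriction $\varrho\le 1$ emerges naturally from the $O(n^{2\varrho-1})$ growth of the residual sum.
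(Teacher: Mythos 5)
Your proposal is correct and takes essentially the same route as the paper's own proof: it sums the component bounds of \Lemma{t:clE-td-err-inequalities} into $\clE^{\tot}_n=\sum_{i=1}^4\|\clE_n^{(i)}\|_T^2$, closes the feedback loop via $\|\clE_n\|_T^2\le 4\clE^{\tot}_n$ together with $\sup_n\|\tiltheta^\bullet_n\|_T^2<\infty$ from \Lemma{t:converge-sa}, and then unrolls the resulting scalar recursion with \Lemma{t:prod-n} and the integral estimate of $\sum_k k^{2\varrho-2}$. The paper performs exactly these steps, with the same constants $L_{\tot}^2=L_{\ref{t:clE-td-err-inequalities}}^2+16K_{\ref{t:clE-td-err-inequalities}}$ and the same emergence of the restriction $\varrho\le 1$ from the growth of the partial sum, so nothing further is needed.
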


\begin{proof}
Denote $\clE^\tot_n = \sum_{i=1}^4\|\clE_{n}^{(i)}\|_T^2$. 
By \Lemma{t:clE-td-err-inequalities}, we can find $n_0 \ge 1$ such that $1-2\varrho\alpha_{n+1} + L_{\ref{t:clE-td-err-inequalities}}^2\alpha_{n+1}^2 > 0$ for $n\geq n_0$ and 
\[
\begin{aligned}
\clE^\tot_{n+1} 
			&\leq (1-2\varrho\alpha_{n+1} + L_{\ref{t:clE-td-err-inequalities}}^2\alpha_{n+1}^2)\clE^\tot_n + 4K_{\ref{t:clE-td-err-inequalities}}\alpha_{n+1}^2(\|\clE_n\|_T^2 + \|\tiltheta^\bullet_n\|_T^2 + 1) \\
			&\leq (1-2\varrho\alpha_{n+1} + L_{\ref{t:clE-td-err-inequalities}}^2\alpha_{n+1}^2)\clE^\tot_n + 4K_{\ref{t:clE-td-err-inequalities}}\alpha_{n+1}^2(4\clE^\tot_n + \|\tiltheta^\bullet_n\|_T^2 + 1)  \\
			&\leq (1-2\varrho\alpha_{n+1} + L_{\tot}^2\alpha_{n+1}^2)\clE^\tot_n + K_{\tot} \alpha_{n+1}^2
\end{aligned} 
\]
with $L_{\tot}^2=L_{\ref{t:clE-td-err-inequalities}}^2+16K_{\ref{t:clE-td-err-inequalities}}$ and $K_{\tot}  = \sup_n 4K_{\ref{t:clE-td-err-inequalities}}( \|\tiltheta^\bullet_n\|_T^2 + 1) $, which are finite by \Lemma{t:converge-sa} combined with \Lemma{t:clE-td-err-inequalities}.   
Iterating this inequality gives, for $n\geq n_0$,
\[
\clE^\tot_{n+1}
				\leq  \clE^\tot_{n_0}\prod_{k=n_0+1}^{n+1} (1-2\varrho\alpha_k+ L_{\tot}^2\alpha_k^2)  + K_{\tot} \sum_{k=n_0+1}^{n+1}  \alpha_k^{2}\prod_{l=k+1}^{n+1} (1-2\varrho \alpha_l + L_{\tot}^2\alpha_l^2) 
\]
By \Lemma{t:prod-n},
\[
\clE^\tot_{n+1}	\leq   \clE^\tot_{n_0} \frac{K_{\ref{t:prod-n}} n_0^{2\varrho}}{(n+2)^{2\varrho}}  + \frac{K_{\ref{t:prod-n}} K_{\tot}}{(n+2)^{2\varrho}} \sum_{k=n_0+1}^{n+1} k^{2\varrho-2}
\]
The partial sum can be estimated by an integral: with $2\varrho - 2 \leq 0$,
\begin{equation}
\sum_{k=n_0}^{n+1} k^{2\varrho - 2} \leq 1+ \int_{n_0}^{n+1} r^{2\varrho -2}dr = 
\begin{cases}
1 + [(n+1)^{2\varrho-1} - n_0^{2\varrho-1}]/(2\varrho - 1)  \,,    & \text{if } \varrho \neq \half \\
1 + \ln(n+1) - \ln(n_0)                \,,   & \text{if } \varrho = \half
\end{cases}
\label{e:partsum-int}
\end{equation}
Given $\varrho \leq 1$,
\[
n^\varrho \clE^\tot_n \leq \clE^\tot_{n_0} \frac{K_{\ref{t:prod-n}} n_0^{2\varrho}}{(n+2)^{\varrho}}  + \frac{K_{\ref{t:prod-n}} K_{\tot}}{(n+2)^{\varrho}} \sum_{k=n_0+1}^{n+1} k^{2\varrho-2} < \infty
\]
Consequently, $\limsup_{n\rightarrow\infty}n^{\varrho}\|\clE_n\|_T^2 < \infty$ by the inequality $n^{\varrho}\|\clE_n\|_T^2\leq 4n^{\varrho}\clE^\tot_n$. Then we have
\[
n^{\varrho}\|\tiltheta^\circ_n\|_T^2 \leq 2n^{\varrho}\|\clE_n \|_T^2 + 2n^{\varrho}\| \tiltheta^\bullet_n \|_T^2 
\]
where $n^{\varrho}\| \tiltheta^\bullet_n \|_T^2 \rightarrow 0$ as $n$ goes to infinity by \Lemma{t:converge-sa}. Hence $\limsup_{n\rightarrow\infty}n^{\varrho}\|\tiltheta^\circ_n\|_T^2 < \infty$.
\end{proof}

\begin{proof}[Proof of \Theorem{t:couple-td}]
First consider $\{\clE_n^{(2)}\}$ updated via \eqref{e:clE-td-err-decomp-2}. 
By the triangle inequality and the inequality $\sqrt{1-x}\leq \half x$,  
\[
\begin{aligned}
\|\clE_{n+1}^{(2)}\|_T   
									 &	\leq \|[I+\alpha_{n+1}A]\clE_n^{(2)}\|_T  + \alpha_n\alpha_{n+1} \|[I+A]\clA_{n+1}\tiltheta^\circ_{n}\|_T \\
									 & \leq  (1-\varrho\alpha_{n+1} + \half L^2\alpha_{n+1}^2) \|\clE_n^{(2)}\|_T  + \alpha_{n+1}^{2+\varrho/2} K
\end{aligned}
\]
where $L= \|A\|_T$ and $K=\sup_n 2\|[I+A]\clA_{n+1}\|_T \|\tiltheta^\circ_n\|/(n+1)^{\varrho/2}$, which is finite thanks to \Lemma{t:clE-td-err}. Hence, by \Lemma{t:prod-n} once more, 
\[
\begin{aligned}
\|\clE_{n+1}^{(2)}\|_T &  \leq \|\clE_1^{(2)}\|_T \prod_{k=2}^{n+1}[1-\varrho\alpha_{k} + \half L^2\alpha_{k}^2]  + K\sum_{k=2}^{n+1} \alpha_{k}^{2+\varrho/2}\prod_{l=k+1}^{n+1}[1-\varrho\alpha_{k} + \half L^2\alpha_{k}^2] \\	
									 &	\leq \|\clE_1^{(2)}\|_T \frac{K_{\ref{t:prod-n}} }{(n+2)^\varrho} +  \frac{KK_{\ref{t:prod-n}} }{(n+2)^\varrho}\sum_{k=2}^{n+1} k^{\varrho/2-2}
\end{aligned}
\]
With $\varrho \leq 1$, we have $\sum_{k=1}^{\infty} k^{\varrho/2-2} \leq \sum_{k=1}^{\infty} k^{-3/2}  < \infty$. Hence $
\limsup_{n\rightarrow \infty}n^{\varrho} \|\clE_n^{(2)}\|_T< \infty$. Replacing $A_{n+1}\tiltheta^\circ_{n} +\Delta_{n+1}$ with $\tiltheta^\circ_{n+1} - \tiltheta^\circ_{n}$ in \eqref{e:clE-td-err-decomp-3}, the same argument applies to $\{\clE_n^{(3)}\}$ and we get $\limsup_{n\rightarrow \infty}n^\varrho \|\clE_n^{(3)}\|_T< \infty$. 
The fact that $\limsup_{n\rightarrow \infty} n\|\clE_{n+1}^{(4)}\|_T < \infty$ follows directly from definition $\clE_{n}^{(4)} = -\alpha_n\clA_{n+1}\tiltheta^\circ_{n}$ and \Lemma{t:clE-td-err}. Then we have, for each $2\leq i \leq4$,
\begin{equation}
\label{e:clE-td-err-rate-23}
\limsup_{n\rightarrow \infty} n^\varrho\|\clE_n^{(i)}\|_T < \infty\,, \qquad   \text{for } \varrho <\varrho_0 \text{ and } \varrho \leq 1
\end{equation}

Now consider the martingale difference part $\{\clE_n^{(1)}\}$.
The following is directly obtained from \eqref{e:clE-td-err-decomp-1}:  
%\spm{This might have gone in a lemma?   The arguments are very repetitive.}
\[
\begin{aligned}
\|\clE_{n+1}^{(1)}\|_T^2 
									\leq & (1-2\varrho\alpha_{n+1} + L^2\alpha_{n+1}^2)\|\clE_n^{(1)}\|_T^2 + \alpha_{n+1}^2\|\Delta^{\clA}_{n+2}\|_T^2 \|\tiltheta^\circ_{n}\|_T^2 \\
									\leq & (1-2\varrho\alpha_{n+1} + L^2\alpha_{n+1}^2)\|\clE_n^{(1)}\|_T^2 + \alpha_{n+1}^2 \|\Delta^{\clA}_{n+2}\|_T^2 \bigl[8\sum_{i=1}^4\|\clE_n^{(i)}\|_T^2 + 2\|\tiltheta^\bullet_n\|_T^2\bigr]\\
\end{aligned}
\]
From \Lemma{t:converge-sa} we have $\sup_n n^{\delta}\|\tiltheta^\bullet_n\|_T^2 < \infty$ for $\delta = \min(1, 2\varrho)$. Combining this with \eqref{e:clE-td-err-rate-23} implies that there exists some constant $K_{\clM}$ such that for $\delta = \min(1, 2\varrho)$,
\[
\|\Delta^{\clA}_{n+2}\|_T^2 \bigl[8\sum_{i=2}^4\|\clE_n^{(i)}\|_T^2 + 2\|\tiltheta^\bullet_n\|_T^2\bigr] \leq K_{\clM}\frac{1}{(n+1)^{\delta}} 
\]
Consequently,
\[
\begin{aligned}
\|\clE_{n+1}^{(1)}\|_T^2 
									& \leq  (1-2\varrho\alpha_{n+1} + L_{\clM}^2\alpha_{n+1}^2)\|\clE_n^{(1)}\|_T^2 +K_{\clM}\alpha_{n+1}^{2+\delta}
\end{aligned}
\]
where $ L_{\clM}^2 = \sup_n L^2 +8\|\Delta^{\clA}_{n+2}\|_T^2$. With initial condition $\clE_0=0$, iterating this inequality gives
\[
\begin{aligned}
\|\clE_{n+1}^{(1)}\|_T^2  
		\leq   K_{\clM}\sum_{k=1}^{n+1} \alpha_k^{2+\delta}\prod_{l=k+1}^{n+1}[1-2\varrho\alpha_{l} +  L_{\clM}^2\alpha_{l}^2]\leq \frac{K_{\clM}K_{\ref{t:prod-n}} }{(n+2)^{2\varrho}}\sum_{k=1}^{n+1} k^{-(2+\delta- 2\varrho)}
\end{aligned}
\]
With $2+\delta -2\varrho > 0$, the partial sum is bounded by an integral similar as \eqref{e:partsum-int}:
\[
\frac{1}{(n+2)^{2\varrho}}\sum_{k=1}^{n+1} k^{-(2+\delta- 2\varrho)} =
\begin{cases}
O((n+1)^{-2\varrho}), & \text{if } \varrho \leq \half \text{ and } \delta= 2\varrho \\
O((n+1)^{-2\varrho}), & \text{if } \half <\varrho < 1 \text{ and } \delta= 1 \\
O((n+1)^{-2}), & \text{if } \varrho > 1 \text{ and } \delta= 1
\end{cases}
\]
Therefore, 
\begin{romannum}
\item If $\varrho_0 \leq 1$, then $\limsup_{n\rightarrow\infty}(n+1)^{2\varrho}\|\clE_{n+1}^{(1)}\|_T^2 <\infty$ for $\varrho < \varrho_0$.
\item If $\varrho_0 > 1$, then $\limsup_{n\rightarrow\infty}(n+1)^{2}\|\clE_{n+1}^{(1)}\|_T^2 <\infty$.
\end{romannum}
Given that the same convergence rates hold for the other components in \eqref{e:clE-td-err-rate-23}, the conclusion then follows.
\end{proof}

\end{document}